\documentclass[11pt]{article}
\usepackage{amssymb,amsmath,color,amsfonts,float,amscd,amsthm,bm,mathrsfs}

\allowdisplaybreaks[4]
\catcode`!=11
\let\!int\int \def\int{\displaystyle\!int}
\let\!lim\lim \def\lim{\displaystyle\!lim}
\let\!sum\sum \def\sum{\displaystyle\!sum}
\let\!sup\sup \def\sup{\displaystyle\!sup}
\let\!inf\inf \def\inf{\displaystyle\!inf}
\let\!cap\cap \def\cap{\displaystyle\!cap}
\let\!max\max \def\max{\displaystyle\!max}
\let\!min\min \def\min{\displaystyle\!min}

\let\oldsection\section
\renewcommand\section{\setcounter{equation}{0}\oldsection}

\newtheorem{theorem}{Theorem}[section]
\newtheorem{lemma}[theorem]{Lemma}

\newtheorem{proposition}[theorem]{Proposition}

\theoremstyle{definition}

\theoremstyle{remark}
\newtheorem{remark}[theorem]{Remark}

\newcommand{\norm}[1]{\left\|#1\right\|}
\newcommand{\abs}[1]{\left|#1\right|}

\usepackage[colorlinks=true, linkcolor=blue, citecolor=blue, urlcolor=blue, breaklinks=true]{hyperref}
\hypersetup{
    pdftitle={Asymptotic behavior of solutions of the compressible MHD equations on the half space},
    pdfsubject={Asymptotic behavior of solutions of the compressible MHD equations on the half space},
    pdfkeywords={Compressible MHD Equations,
Initial Boundary Value Problem,
Asymptotic Behavior of Solutions,
Insulating Boundary Condition,
Perfectly Conducting Boundary Condition}
}

\begin{document}

\title{Asymptotic behavior of solutions of the compressible MHD equations on the half space}
\author{Song Gao\footnote{Corresponding author. School of Mathematical Sciences, Shanghai Jiao Tong University, Shanghai 200240, P. R. China. Email: gaoshanbudi@sjtu.edu.cn}
}
\date{}
\maketitle

\begin{abstract}
In this paper, we study the global well-posedness and large-time asymptotic behavior of strong solutions to the $n$-dimensional ($n\ge 2$) compressible viscous resistive magnetohydrodynamics (MHD) equations in the upper half-space $\mathbb{R}_+^n$. We impose the standard no-slip boundary condition on the velocity field and two types of physically relevant boundary conditions on the magnetic field: the insulating boundary condition and the perfectly conducting boundary condition. Under the assumption that the initial data are small perturbations of the constant equilibrium state $(\bar{\rho}, \mathbf{0}, \mathbf{0})$, we first establish the global existence and uniqueness of strong solutions in Sobolev spaces by deriving uniform energy estimates and using a bootstrapping argument. Then, by combining the sharp decay estimates of solutions to the corresponding linearized system with Duhamel's principle, we obtain the  decay rates of solutions to the nonlinear problem. Specifically, we prove that the solutions decay at the rate $t^{-\frac{n}{2}\left(1-\frac{1}{p}\right)}$ in $L^p$ norm for $2\le p\le\infty$, and we also establish the decay rates of their first-order derivatives. Furthermore, we show that the global solutions converge to the linearized solutions with an improved decay rate as time goes to infinity. Our results extend the classical results of Kagei and Kobayashi \cite{Kagei-Kobayashi2002, Kagei-Kobayashi2005} for the compressible Navier--Stokes equations on the half space to the compressible MHD system with both insulating and perfectly conducting boundary conditions.
\end{abstract}

\textbf{Mathematics Subject Classification} 35Q35 $\cdot$ 35A01 $\cdot$ 35A02 $\cdot$ 35B40 $\cdot$ 76W05

\tableofcontents

\section{Introduction and main results}\label{sec:1}
The present work investigates both the global well-posedness and the large-time asymptotic behavior of strong solutions to the n-dimensional ( \(n\ge 2\) ) compressible viscous resistive magnetohydrodynamics (MHD) equations, which are formulated on the upper half-space
\(\mathbb{R}_+^n \triangleq \{(x', x_n) \,\big|\, x'\triangleq(x_1,\dots,x_{n-1})\in\mathbb{R}^{n-1},\ x_n \ge 0\}\).
\begin{equation}\label{1.1}
 \left\{
\begin{aligned}
&\partial_t\rho + \operatorname{div}(\rho\mathbf{v}) = 0,\\
&\rho\left(\partial_t\mathbf{v} + \mathbf{v} \cdot \nabla \mathbf{v}\right) - \mu\Delta\mathbf{v} - (\mu + \lambda)\nabla \operatorname{div} \mathbf{v} + \nabla P =\mathbf{B}\cdot \nabla \mathbf{B}-\frac{1}{2}\nabla\abs{\mathbf{B}}^2, \\
&\partial_t\mathbf{B} - \kappa\Delta\mathbf{B} + \mathbf{v} \cdot \nabla \mathbf{B} + \mathbf{B}\operatorname{div} \mathbf{v} = \mathbf{B} \cdot \nabla \mathbf{v}, \\
&\operatorname{div} \mathbf{B} = 0, \\
&(\rho, \mathbf{v}, \mathbf{B})\big|_{t=0} = (\rho_0, \mathbf{v}_0, \mathbf{B}_0).
\end{aligned}
\right.
\end{equation}
In the above system, the dependent variables \(\rho \in \mathbb{R}_+\), \(\mathbf{v} \in \mathbb{R}^n\) and \(\mathbf{B} \in \mathbb{R}^n\) stand for the fluid density, velocity field and magnetic field, respectively.
The pressure \(P = P(\rho)\) is a smooth function depending on the density, which satisfies \(P'(\rho) > 0\) and takes the value \(P'(\bar{\rho}) = 1\) at the constant reference density \(\bar{\rho} > 0\).
The positive parameter \(\kappa > 0\) characterizes the electrical conductivity associated with the magnetic field evolution.
The viscosity coefficients \(\mu\) and \(\lambda\) fulfill the constraints \(\mu > 0\) and \(2\mu + n\lambda \geq 0\).
The divergence-free condition \(\operatorname{div} \mathbf{B}=0\) is automatically preserved for all time as a direct consequence of the initial condition \(\operatorname{div} \mathbf{B}_0=0\).

The velocity field satisfies the standard no-slip boundary condition at the boundary of the half-space:
\begin{equation}\label{1.2}
\mathbf{v}\big|_{x_n=0} = \mathbf{0}.
\end{equation}

For the magnetic field, we adopt two physically relevant boundary regimes: the insulating boundary condition and the perfectly conducting boundary condition, given by
\begin{equation}\label{1.3}
\left(\mathbf{B}', \partial_{x_n} B_n\right)\big|_{x_n=0} = \mathbf{0}\quad \text{or}\quad\left(\partial_{x_n}\mathbf{B}',B_n\right)\big|_{x_n=0} = \mathbf{0},
\end{equation}
where the prime symbol \('\) stands for the tangential components of an \(n\)-dimensional vector field.

This paper is primarily devoted to investigating the global well-posedness and the large-time asymptotic behavior of strong solutions to systems \eqref{1.1}–\eqref{1.3}, in a small neighborhood of the constant equilibrium state \((\bar{\rho}, \mathbf{0}, \mathbf{0})\).

Without loss of generality, we normalize the parameters by setting \(\bar{\rho} = \kappa = 1\), and we let \(\alpha = \mu > 0\) and \(\beta = \mu + \lambda \geq 0\). We then introduce the density perturbation given by
\begin{align*}
a \triangleq \rho - 1.
\end{align*}

Under this change of variables, the original initial-boundary value problem \eqref{1.1}–\eqref{1.3} can be recast as
\begin{equation}\label{1.4}
 \left\{
\begin{aligned}
&\partial_t a + \operatorname{div} \mathbf{v} = f_1,\\
&\partial_t\mathbf{v} -  \alpha \Delta\mathbf{v} -  \beta \nabla \operatorname{div} \mathbf{v} + \nabla a = {f}_2, \\
&\partial_t\mathbf{B} - \Delta \mathbf{B} = {f}_3, \\
&\operatorname{div} \mathbf{B} = 0, \\
&(a, \mathbf{v}, \mathbf{B})\big|_{t=0} = (a_0, \mathbf{v}_0, \mathbf{B}_0),\\
&\mathbf{v}\big|_{x_n=0} = \mathbf{0},\\
&\left(\mathbf{B}', \partial_{x_n} B_n\right)\big|_{x_n=0} = \mathbf{0}\quad \text{or}\quad\left(\partial_{x_n}\mathbf{B}',B_n\right)\big|_{x_n=0} = \mathbf{0},
\end{aligned}
\right.
\end{equation}
where \(f_i\) \((i=1,2,3)\) are nonlinear terms given by
\begin{equation}\notag
 \left\{
\begin{aligned}
&f_1 \triangleq -\mathbf{v} \cdot \nabla a - a\operatorname{div} \mathbf{v},\\
&{f}_2 \triangleq -\mathbf{v} \cdot \nabla \mathbf{v} + \mathbf{B} \cdot \nabla \mathbf{B} - \mathbf{B}\nabla \mathbf{B} - J(a)\nabla a \\
&\quad\quad - I(a)\left( \alpha \Delta\mathbf{v} +  \beta \nabla \operatorname{div} \mathbf{v} + \mathbf{B} \cdot \nabla \mathbf{B} - \mathbf{B}\nabla \mathbf{B}\right),\\
&{f}_3 \triangleq -\mathbf{v} \cdot \nabla \mathbf{B} - \mathbf{B}\operatorname{div} \mathbf{v} + \mathbf{B} \cdot \nabla \mathbf{v}
\end{aligned}
\right.
\end{equation}
with
\begin{align*}
I(a) \triangleq \frac{a}{a + 1} \quad \text{and} \quad J(a) \triangleq \frac{P'(a + 1)}{a + 1} - 1.
\end{align*}
In the subsequent sections, we establish global uniform energy estimates for solutions to the reformulated system \eqref{1.4}.
To further facilitate the decay-rate analysis carried out later, we also introduce an equivalent perturbed formulation expressed in terms of the momentum variable. Specifically, we define
\[
\mathbf{m}\triangleq(a+1)\mathbf{v},
\]
with the aid of which the initial-boundary value problem \eqref{1.4} can be rewritten into the following equivalent form:
\begin{equation}\label{1.5}
 \left\{
\begin{aligned}
&\partial_t a + \operatorname{div} \mathbf{m} = 0,\\
&\partial_t\mathbf{m} -  \alpha \Delta\mathbf{m} -  \beta \nabla \operatorname{div} \mathbf{m} + \nabla a = \operatorname{div} \mathcal{N}_1+\operatorname{div} \mathcal{N}_2, \\
&\partial_t\mathbf{B} - \Delta \mathbf{B} = \operatorname{div} \mathcal{N}_3, \\
&(a, \mathbf{m}, \mathbf{B})\big|_{t=0} = (a_0, \mathbf{m}_0, \mathbf{B}_0),\\
&\mathbf{m}\big|_{x_n=0} = \mathbf{0},\\
&\left(\mathbf{B}', \partial_{x_n} B_n\right)\big|_{x_n=0} = \mathbf{0}\quad \text{or}\quad\left(\partial_{x_n}\mathbf{B}',B_n\right)\big|_{x_n=0} = \mathbf{0},
\end{aligned}
\right.
\end{equation}
Here \(\mathbf{m} \in \mathbb{R}^n\) stands for the momentum variable, with initial datum \(\mathbf{m}_0\triangleq(a_0+1)\mathbf{v}_0\).
For each \(i=1,2,3\), \(\mathcal{N}_i\) denotes an \(n\times n\) matrix with entries \(\left((\mathcal{N}_i)_{j k}\right)_{1 \leq j, k \leq n}\).
The explicit forms of these matrices are given by
\enlargethispage{1\baselineskip} 
\pagebreak[1] 
\begin{equation}\notag
\left\{
\begin{aligned}
(\mathcal{N}_1)_{jk} &\triangleq -\delta_{jk}\left(a^2 \int_{0}^{1} (1-\theta) P''\left(a\theta + 1\right) \mathrm{d}\theta\right) -  \alpha  \partial_{k}\left( \frac{a m_j}{a+1} \right) \\
&\quad -\delta_{jk} \beta  \operatorname{div}\left( \frac{a \mathbf{m}}{a+1} \right) - \frac{m_j m_k}{a+1}, \\
(\mathcal{N}_2)_{jk} &\triangleq B_j B_k - \frac{1}{2}\delta_{jk}\abs{\mathbf{B}}^2, \\
(\mathcal{N}_3)_{jk} &\triangleq \frac{1}{{a}+1}\left(m_j B_k-B_j m_k\right),
\end{aligned}
\right.
\end{equation}
Here, for every \(i = 1, 2, 3\), the \(j\)-th component of \(\operatorname{div} \mathcal{N}_i\), obtained by taking the divergence along each row of the matrix, is expressed as \(\sum_{k=1}^{n} \partial_{k} (\mathcal{N}_i)_{j k}\).

The linearized system corresponding to \eqref{1.5} reads:
\begin{equation}\label{1.6}
 \left\{
\begin{aligned}
&\partial_t a + \operatorname{div} \mathbf{m} = 0,\\
&\partial_t\mathbf{m} -  \alpha \Delta\mathbf{m} -  \beta \nabla \operatorname{div} \mathbf{m} + \nabla a = \mathbf{0}, \\
&\partial_t\mathbf{B} - \Delta \mathbf{B} = \mathbf{0}, \\
&(a, \mathbf{m}, \mathbf{B})\big|_{t=0} = (a_0, \mathbf{m}_0, \mathbf{B}_0),\\
&\mathbf{m}\big|_{x_n=0} = \mathbf{0},\\
&\left(\mathbf{B}', \partial_{x_n} B_n\right)\big|_{x_n=0} = \mathbf{0}\quad \text{or}\quad\left(\partial_{x_n}\mathbf{B}',B_n\right)\big|_{x_n=0} = \mathbf{0},
\end{aligned}
\right.
\end{equation}

This system decouples fully into two independent subsystems, each of which can be analyzed separately:

\begin{itemize}
\item[(i)] The density perturbation \(a\) and momentum perturbation \(\mathbf{m}\) satisfy the following linearized compressible Navier–Stokes system subject to the no-slip boundary condition:
\begin{equation}\label{1.7}
\left\{
\begin{aligned}
&\partial_t a + \operatorname{div} \mathbf{m} = 0,\\
&\partial_t\mathbf{m} - \alpha \Delta\mathbf{m} - \beta \nabla \operatorname{div} \mathbf{m} + \nabla a = \mathbf{0}, \\
&(a, \mathbf{m})\big|_{t=0} = (a_0, \mathbf{m}_0),\\
&\mathbf{m}\big|_{x_n=0} = \mathbf{0}.
\end{aligned}
\right.
\end{equation}

\item[(ii)] The magnetic field perturbation \(\mathbf{B}\) satisfies the following vector-valued heat equation, subject to either the insulating or perfectly conducting boundary condition defined above:
\begin{equation}\label{1.8}
\left\{
\begin{aligned}
&\partial_t\mathbf{B} - \Delta \mathbf{B} = \mathbf{0}, \\
&\mathbf{B}\big|_{t=0} = \mathbf{B}_0,\\
&\left(\mathbf{B}', \partial_{x_n} B_n\right)\big|_{x_n=0} = \mathbf{0}\quad \text{or}\quad\left(\partial_{x_n}\mathbf{B}',B_n\right)\big|_{x_n=0} = \mathbf{0}.
\end{aligned}
\right.
\end{equation}
\end{itemize}
We write \((a,\mathbf{m}) \triangleq \mathscr{U}(t)\mathbf{u}_0\) and \(\mathbf{B} \triangleq \mathscr{B}(t)\mathbf{B}_0\) for the solutions to subsystems \eqref{1.7} and \eqref{1.8} with initial data \(\mathbf{u}_0 \triangleq (a_0,\mathbf{m}_0)\) and \(\mathbf{B}_0\), respectively.

\textbf{Related works}: The local well-posedness theory for the MHD equations can be established using methods analogous to those introduced in \cite{MN1982,Tani1977}. Over the past decades, a wealth of important results on global solutions to the Navier–Stokes and MHD systems have been obtained; a representative but non-exhaustive selection of references includes \cite{chen2002global,chen2003existence,chen2010global,ducomet2006equations,hu2008compactness,hu2008global,hu2010global,jiang2003global,kawashima1981global,kawashima1982smooth,kobayashi2002some,
matsumura1980initial,wang2003large,zhang2008global,zhu2010asymptotic}.

Decay properties of solutions represent a topic of both theoretical interest and practical significance in the mathematical analysis of the Navier–Stokes and MHD equations. Substantial progress has been made in recent years regarding decay estimates for smooth solutions, both for convergence toward constant equilibrium states and for nontrivial stationary profiles in the presence of external potential forces. For a comprehensive review of this line of research, interested readers may wish to consult the works cited in \cite{chen2010global,duan2007optimalLpLq,duan2007optimal,gao2016optimal,hoff1995multi,hoff1997pointwise,Kagei-Kobayashi2002,Kagei-Kobayashi2005,kobayashi1999decay,
kobayashi2002remarks,kobayashi2002some,
liu1998pointwise,matsumura1979initial,ponce1985global,Pu-Guo2013,ukai2006convergence,wang1997large}.
Turning specifically to compressible fluid dynamics, Matsumura and Nishida \cite{Matsumura-Nishida1983} were the first to establish the unique existence of global \(H^3\) solutions to the compressible Navier–Stokes equations in the half-space \(\mathbb{R}_{+}^{3}\) under sufficiently small initial perturbations. Building on this foundational work, Kagei and Kobayashi made landmark contributions in two seminal papers \cite{Kagei-Kobayashi2002, Kagei-Kobayashi2005}, in which they systematically investigated the large-time asymptotic behavior of compressible Navier–Stokes solutions under the no-slip velocity boundary condition. For three-dimensional compressible viscous flows in exterior domains, decay rate estimates were derived in \cite{kobayashi1999decay,kobayashi2002some} via linearization combined with Fourier analytic techniques. Duan et al. \cite{duan2007optimalLpLq,duan2007optimal} subsequently established sharp decay rates for the compressible Navier–Stokes equations coupled with an energy equation in the presence of potential body forces. The techniques they developed were later applied to the isentropic compressible MHD equations in \cite{chen2010global}, yielding corresponding decay estimates. More recently, Pu and Guo \cite{Pu-Guo2013} proved the global existence of smooth solutions and established their optimal decay rates for the three-dimensional full compressible MHD equations under small initial perturbations. Gao et al. \cite{gao2016optimal} further obtained sharp decay bounds for high-order derivatives of solutions to the full compressible MHD system using the Fourier splitting method, again under the assumption of small initial data. For related results concerning cylindrically symmetric compressible Navier–Stokes and MHD equations, we refer the interested readers to \cite{fan2007zero,fan2007vanishing,frid2000vanishing,ukai2006convergence,zhai2025nodea,zhai2025analappl} for further details.

In this paper, we aim to extend the results of \cite{Kagei-Kobayashi2002, Kagei-Kobayashi2005} to the n-dimensional (\(n\ge 2\)) compressible viscous resistive MHD system in the half-space \(\mathbb{R}_+^n\), subject to the no-slip velocity boundary condition together with either insulating or perfectly conducting boundary conditions for the magnetic field.

\textbf{Notation}. For ease of notation, we write \(L^p(\mathbb{R}_+^n)\) for \(p\geq 1\) and \(H^k(\mathbb{R}_+^n)\) for \(k\geq 0\) to denote the standard Lebesgue and Sobolev spaces over the half-space \(\mathbb{R}_+^n\), endowed with the norms \(\norm{\cdot}_p\) and \(\norm{\cdot}_{H^k}\), respectively.
The symbol \((\cdot,\cdot)\) is used for the standard \(L^2\) inner product.
We write \(A \lesssim B\) to indicate that the inequality \(A \leq C B\) holds for some generic positive constant \(C\).
The bracket notation \([q]\) refers to the floor function, i.e., the greatest integer less than or equal to \(q\).
We use \(\partial_x^k f\) with an integer \(k\geq 0\) to represent the collection of all \(k\)-th order partial derivatives of \(f\) with respect to the full spatial variable \(x\); similarly, \(\partial_{x'}^k f\) stands for the collection of all \(k\)-th order tangential derivatives with respect to \(x'\).
For later use, we set the regularity index \(s_0 \triangleq \left[\frac{n}{2}\right] + 1\).

Before stating our main results, we first introduce two fundamental functionals—the energy functional and the dissipation functional—whose explicit expressions are provided as follows.
\begin{align*}
   E_{\sigma}^{s}(t)\triangleq & \sup_{0\leq \tau \leq t}(1+\tau)^{\sigma}\norm{(a, \mathbf{v}, \mathbf{B})(\tau)}_{H^s} +\sum_{j=1}^{\left[\frac{s+1}{2}\right]}\sup_{0\leq \tau \leq t}(1+\tau)^{\sigma} \norm{\partial_{\tau}^{j} a(\tau)}_{H^{s+1-2 j}} \\
     &+ \sum_{j=1}^{\left[\frac{s}{2}\right]}\sup_{0\leq \tau \leq t} (1+\tau)^{\sigma}\norm{\partial_{\tau}^{j} (\mathbf{v}, \mathbf{B})(\tau)}_{H^{s-2 j}}
\end{align*}
and
\begin{align*}
D_{\sigma}^{s}(t)\triangleq \Bigg( &\int_{0}^{t}(1+\tau)^{2\sigma}\left(\norm{\partial_{x} a (\tau)}_{H^{s-1}}^{2}+\norm{\partial_{x}(\mathbf{v}, \mathbf{B})(\tau)}_{H^s}^{2}\right) \mathrm{d} \tau \\
&+\sum_{j=1}^{\left[\frac{s+1}{2}\right]} \int_{0}^{t}(1+\tau)^{2\sigma}\norm{\partial_{\tau}^{j} (a, \mathbf{v}, \mathbf{B})(\tau)}_{H^{s+1-2 j}}^{2} \mathrm{d} \tau \Bigg)^{\frac{1}{2}},
\end{align*}
where \(s \in \mathbb{Z}_+\) and $\sigma\geq 0$.

The global well-posedness of solutions to \eqref{1.4} is stated as follows.
\begin{theorem}[Global well-posedness and uniform estimate]\label{T1.1}
Let the spatial dimension \(n\) satisfy \(n \geq 2\). Suppose that for some integer regularity exponent \(s \geq s_0\), the initial data \((a_0,\mathbf{v}_0,\mathbf{B}_0) \in H^s(\mathbb{R}_+^n)\) fulfill the associated compatibility conditions. Then there exists a positive constant \(\eta_0 > 0\) such that if
\(
\norm{(a_0,\mathbf{v}_0,\mathbf{B}_0)}_{H^s} \leq \eta_0,
\)
then the initial-boundary value problem \eqref{1.4} admits a unique global solution \((a,\mathbf{v},\mathbf{B})\) in the class
\[
\begin{gathered}
a \in C\big([0, \infty) ; H^{s}\big), \quad \partial_{t}^{j} a \in C\big([0, \infty) ; H^{s+1-2 j}\big), \quad 1 \leq j \leq \left[\frac{s+1}{2}\right], \\
\partial_{t}^{j}(\mathbf{v}, \mathbf{B}) \in C\big([0, \infty) ; H^{s-2 j}\big), \quad 0 \leq j \leq \left[\frac{s}{2}\right], \\
\partial_{x} a \in L^{2}\big(0, \infty ; H^{s-1}\big), \quad \partial_{x}(\mathbf{v}, \mathbf{B}) \in L^{2}\big(0, \infty ; H^{s}\big), \\
\partial_{t}^{j} (a, \mathbf{v}, \mathbf{B}) \in L^{2}\big(0, \infty ; H^{s+1-2 j}\big), \quad 1 \leq j \leq \left[ \frac{s+1}{2}\right].
\end{gathered}
\]
Moreover, the uniform estimate
\begin{equation}\label{1.9}
  E_{0}^{s}(t)+D_{0}^{s}(t) \lesssim \eta_0
\end{equation}
holds for all \(t \geq 0\).
\end{theorem}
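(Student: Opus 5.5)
We will argue by the standard continuation (bootstrapping) method. Local existence and uniqueness in the stated class follow from the methods of \cite{MN1982,Tani1977}, adapted to the coupled system \eqref{1.4} with the no-slip condition on $\mathbf{v}$ and either magnetic boundary condition in \eqref{1.3}; the compatibility conditions guarantee the time-derivative regularity. It therefore suffices to prove the a priori estimate: there are $\delta>0$ and $C_0$ such that, if a solution on $[0,T]$ satisfies $E_0^s(T)\le\delta$, then
\begin{equation*}
E_0^s(T)^2+D_0^s(T)^2\le C_0\norm{(a_0,\mathbf{v}_0,\mathbf{B}_0)}_{H^s}^2+C_0\big(E_0^s(T)+E_0^s(T)^2\big)D_0^s(T)^2 .
\end{equation*}
For $\delta$ small the last term can be absorbed. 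Taking $\eta_0$ small then gives \eqref{1.9} and closes the continuation argument. Throughout, $s\ge s_0=[n/2]+1$ gives $H^s\hookrightarrow L^\infty$, and the nonlinearities $f_1,f_2,f_3$ are estimated by Moser/Gagliardo--Nirenberg product inequalities. The functions $I(a)$ and $J(a)$ are smooth for $|a|\le 1/2$.

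The energy estimates proceed in the following order, in the spirit of Matsumura--Nishida \cite{Matsumura-Nishida1983}.

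First, the basic $L^2$ estimate. Multiply the equations of \eqref{1.4} by $(a,\mathbf{v},\mathbf{B})$. The boundary terms vanish: for $\mathbf{v}$ because $\mathbf{v}|_{x_n=0}=0$. For $\mathbf{B}$, the term $\int_{x_n=0}\partial_{x_n}\mathbf{B}\cdot\mathbf{B}$ vanishes componentwise under either pair of conditions in \eqref{1.3}. This yields control of $\norm{\partial_x(\mathbf{v},\mathbf{B})}_2^2$.

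Second, tangential derivatives. Since $\partial_{x'}$ and $\partial_t$ preserve all boundary conditions, apply $\partial_{x'}^k\partial_t^j$ and repeat the $L^2$ estimate. For the heat equation for $\mathbf{B}$, one may also use normal derivatives directly, because each component satisfies a pure Dirichlet or Neumann condition, i.e. by reflection. For $(a,\mathbf{v})$, time derivatives are traded for spatial ones through the equations, which explains the weights $H^{s+1-2j}$, $H^{s-2j}$ in $E^s_0$ and $D^s_0$.

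Third, normal derivatives of $(a,\mathbf{v})$. Dissipation of $a$ comes from the hyperbolic--parabolic coupling. One combines the $x_n$-component of the momentum equation with $\partial_{x_n}$ of the continuity equation, eliminating $\partial_{x_n}^2 v_n$ via $\alpha+\beta>0$. This produces a damped transport equation for $\partial_{x_n}a$,
\begin{equation*}
\partial_t\partial_{x_n}a+\tfrac{1}{\alpha+\beta}\partial_{x_n}a=\text{(tangential/time derivatives of }\mathbf{v})+\text{nonlinear},
\end{equation*}
which gives $\int\norm{\partial_{x_n}\partial_x^{k}a}_2^2$. Then the Stokes-type elliptic estimate for $-\alpha\Delta\mathbf{v}-\beta\nabla\operatorname{div}\mathbf{v}+\nabla a=f_2-\partial_t\mathbf{v}$ with Dirichlet data recovers the full $H^{k+2}$ norm of $\mathbf{v}$. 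The resistive equation is handled similarly by elliptic regularity for $-\Delta\mathbf{B}$ with the (decoupled Dirichlet/Neumann) boundary conditions. Induction on the number of normal derivatives then controls the full $D_0^s$.

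The main obstacle is the top-order nonlinear terms, chiefly $\mathbf{v}\cdot\nabla a$ in $f_1$ and $I(a)\Delta\mathbf{v}$ in $f_2$. Because $a$ is only transported, $\partial_x^{s}(\mathbf{v}\cdot\nabla a)$ appears to lose a derivative. The plan is to integrate by parts in the principal part, $(\mathbf{v}\cdot\nabla\partial^s a,\partial^s a)=-\tfrac12(\operatorname{div}\mathbf{v},|\partial^s a|^2)$, with no boundary term since $\mathbf{v}|_{x_n=0}=0$. Commutators are bounded by $\norm{\partial_x\mathbf{v}}_{H^{s}}\norm{a}_{H^s}^2$, which is $\lesssim E_0^s (D_0^s)^2$ after time integration together with the $a$-dissipation above. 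For the normal-derivative step, the corresponding integration by parts must be done in the damped transport equation for $\partial_{x_n}\partial^k a$, tracking that $v_n=O(x_n)$ near the boundary.

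The magnetic terms $\mathbf{B}\cdot\nabla\mathbf{B}$, $\mathbf{B}\operatorname{div}\mathbf{v}$ and so on are semilinear and easily bounded by $E_0^s(D_0^s)^2$. Uniqueness follows from an $L^2$ energy estimate for the difference of two solutions.
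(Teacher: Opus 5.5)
Your outline follows the same route as the paper, which defers the details to Kagei--Kobayashi. The ingredients match: tangential and time-derivative energy estimates, a damped transport equation for normal derivatives of $a$, recovery of normal derivatives of $\mathbf{v}$ and $\mathbf{B}$ from the equations, induction on the number of normal derivatives, product estimates, and a small-data continuation.

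One step is missing, however. Nothing in your scheme produces the dissipation of purely tangential derivatives of the density, $\int_0^t\norm{\partial_{x'}^k a}_2^2\,\mathrm{d}\tau$ for $k\ge 1$. This quantity is part of $D_0^s$, and it is needed in the nonlinear estimates, e.g.\ for $J(a)\nabla a$. Each of your three sources falls short:
\begin{itemize}
\item The tangential energy estimates dissipate only $\partial_x\mathbf{v}$, $\frac{Da}{Dt}=-(1+a)\operatorname{div}\mathbf{v}$ and $\partial_t(a,\mathbf{v})$.
\item The damped transport equation controls only derivatives containing at least one $\partial_{x_n}$.
\item The elliptic estimate as you set it up puts $\nabla a$ on the forcing side of $-\alpha\Delta\mathbf{v}-\beta\nabla\operatorname{div}\mathbf{v}=f_2-\partial_t\mathbf{v}-\nabla a$, with no divergence constraint. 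It therefore needs $\norm{\partial_x^{k+1}a}_2$ as input rather than producing it.
\end{itemize}
Solving the tangential momentum equation for $\nabla_{x'}a$ instead brings back $\alpha\partial_{x_n}^2\mathbf{v}'$ with an $O(1)$ constant. So the argument is circular at this point.

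The fix is item (v) of Proposition~\ref{P2.1}: use the genuine Stokes structure
\[
-\alpha\Delta\mathbf{v}+\nabla\big(a-\beta\operatorname{div}\mathbf{v}\big)=f_2-\partial_t\mathbf{v},\qquad
\operatorname{div}\mathbf{v}=-\frac{Da}{Dt}-a\operatorname{div}\mathbf{v},\qquad
\mathbf{v}\big|_{x_n=0}=\mathbf{0},
\]
with $a$ in the role of the pressure and the divergence prescribed through $\frac{Da}{Dt}$. The half-space Stokes (Cattabriga) estimate then bounds $\norm{\partial_x^{\ell+2}\mathbf{v}}_2+\norm{\partial_x^{\ell+1}a}_2$ by $\norm{\partial_x^{\ell+1}\frac{Da}{Dt}}_2+\norm{\partial_x^{\ell}\partial_t\mathbf{v}}_2$ plus nonlinear terms. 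For this to be usable, you must carry the dissipation of $\frac{Da}{Dt}$ through the whole scheme:
\begin{itemize}
\item its tangential derivatives come from the tangential energy estimates;
\item its normal derivatives come from the damped transport equation, which controls $\frac{D}{Dt}\partial_{x_n}a$ as well as $\partial_{x_n}a$.
\end{itemize}
This is why $\frac{Da}{D\tau}$ appears in items (i), (iii) and (v). With this step added, your proof agrees with the paper's.

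Two smaller points.
\begin{itemize}
\item Your closing inequality should also contain fixed-time terms such as $\norm{f_2(t)}_{H^{s-2}}^2\lesssim E_0^s(t)^4$, coming from the pointwise elliptic bounds. These are harmless for absorption.
\item Drop the aside about reflection for $\mathbf{B}$. Under the insulating condition, $f_3'\big|_{x_n=0}=B_n\partial_{x_n}\mathbf{v}'\neq 0$, so the odd extension of the forcing is not in $H^1$ and reflection cannot give higher normal regularity. The identity $\partial_{x_n}^2\mathbf{B}=\partial_t\mathbf{B}-\Delta_{x'}\mathbf{B}-f_3$ used in items (ix)--(x) works for both boundary conditions.
\end{itemize}
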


\begin{remark}
Owing to the condition \(s \geq s_0\), an application of the Sobolev embedding theorem allows us to choose \(\eta_0 > 0\) small enough in Theorem \ref{T1.1} so that the bound \(\norm{a(t)}_{L^{\infty}} \leq \frac{1}{2}\) holds for every \(t \geq 0\).
As a result, the density variable corresponding to system \eqref{1.1} stays uniformly bounded both from above and from below by positive constants over the whole time interval.
\end{remark}

Moving beyond global well-posedness, we next establish the decay rates of solutions to the nonlinear problem \eqref{1.1}–\eqref{1.3}.
This is achieved by combining the sharp decay estimates for solutions of the associated linearized compressible MHD system with Duhamel’s principle.
The complete statement of our asymptotic results is summarized in the theorem below.
\begin{theorem}[Asymptotic Behavior]\label{T1.3}
Let \(n \geq 2\) denote the number of spatial dimensions. Suppose the initial data \((a_0,\mathbf{m}_0,\mathbf{B}_0) \in H^s(\mathbb{R}_+^n) \cap L^1(\mathbb{R}_+^n)\) fulfill the associated compatibility conditions, where \(s \in \mathbb{Z}\) is an integer regularity exponent. Then there exists a positive constant \(\eta_1 > 0\) such that if
\(
    \norm{(a_0, \mathbf{m}_0, \mathbf{B}_0)}_{H^s} + \norm{(a_0, \mathbf{m}_0, \mathbf{B}_0)}_{L^1} \leq \eta_1,
\)
then the following statements hold:
\begin{enumerate}
\item[(i)] For \(s \geq s_0 + 1\), the perturbations \(\mathbf{u}(t) \triangleq (a, \mathbf{m})(t)\) and \(\mathbf{B}(t)\) satisfy
\[
\norm{(\mathbf{u},\mathbf{B})(t)}_{p} = O\left(t^{-\frac{n}{2}\left(1-\frac{1}{p}\right)}\right),
\]
\[
\norm{\partial_{x} \mathbf{u}(t)}_{2}=
\begin{cases}
O\left(t^{-\frac{n}{4}-\frac{3}{8}}\right) & n=2, \\
O\left(t^{-\frac{n}{4}-\frac{1}{2}}\right) & n \geq 3
\end{cases}
\]
and
\[
\norm{\partial_{x} \mathbf{B}(t)}_{2}= O\left(t^{-\frac{n}{4}-\frac{1}{2}}\right)
\]
for all \(2 \leq p \leq \infty\) as \(t \to \infty\).

\item[(ii)] For \(s \geq s_0 + 1\), it holds that
\[
\norm{\mathbf{u}(t)-\mathscr{U}(t) \mathbf{u}_{0}}_{2}=O\left(t^{-\frac{n}{4}-\frac{1}{4}} L(t)\right)
\]
as \(t \to \infty\), where \(L(t) \triangleq \log(t+1)\) for \(n=2\) and \(L(t) \triangleq 1\) for \(n \geq 3\).

\item[(iii)] For \(s \geq s_0 + 2\), it holds that
\[
\norm{\mathbf{u}(t)-\mathscr{U}(t) \mathbf{u}_{0}}_{\infty}=
\begin{cases}
O\left(t^{-\frac{n}{2}-\frac{3}{8}} \log(t+1)\right) & n=2, \\
O\left(t^{-\frac{n}{2}-\frac{1}{2}}\right) & n \geq 3
\end{cases}
\]
as \(t \to \infty\).

\item[(iv)] For \(s \geq s_0 + 1\), it holds that
\[
\norm{\mathbf{B}(t)-\mathscr{B}(t) \mathbf{B}_{0}}_{2}=O\left(t^{-\frac{n}{4}-\frac{1}{2}} L(t)\right)
\]
and
\[
\norm{\mathbf{B}(t)-\mathscr{B}(t) \mathbf{B}_{0}}_{\infty}=
\begin{cases}
O\left(t^{-\frac{n}{2}-\frac{3}{8}} \log(t+1)\right) & n=2, \\
O\left(t^{-\frac{n}{2}-\frac{1}{2}}\right) & n \geq 3
\end{cases}
\]
as \(t \to \infty\).
\end{enumerate}
\end{theorem}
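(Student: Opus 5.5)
The plan follows the Kagei–Kobayashi strategy: combine linear semigroup estimates with Duhamel's formula for \eqref{1.5}, and close a bootstrap using the uniform bounds of Theorem~\ref{T1.1}.

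\textbf{Linear estimates.} For $\mathscr{U}(t)$ I will take the half-space estimates of Kagei and Kobayashi for the linearized compressible Navier--Stokes system \eqref{1.7}:
\[
\norm{\partial_x^k \mathscr{U}(t)\mathbf{u}_0}_p\lesssim t^{-\frac n2(1-\frac1p)-\frac k2}\norm{\mathbf{u}_0}_1+e^{-ct}\norm{\mathbf{u}_0}_{H^{s}},
\]
with $k=0,1$. When $n=2$ and $k=1$ the rate is weaker (the $\frac38$ loss) because of the boundary-layer part of the resolvent. I also need the analogous bounds for data in divergence form, $\mathscr{U}(t)(0,\operatorname{div}\mathcal N)$, which give an extra factor $t^{-1/2}$ on the low-frequency part.

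For $\mathscr{B}(t)$ I will use reflection. Under the insulating condition, $\mathbf{B}'$ is extended oddly and $B_n$ evenly in $x_n$; under the perfectly conducting condition the parities are reversed. Each component then solves the whole-space heat equation, so the $L^1$--$L^p$ and derivative estimates, including those for $e^{t\Delta}\operatorname{div}$, are the standard whole-space ones.

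\textbf{Decay functional and Duhamel estimate.} Define
\[
M(t)=\sup_{0\le\tau\le t}\sum_{p\in\{2,\infty\}}(1+\tau)^{\frac n2(1-\frac1p)}\norm{(\mathbf{u},\mathbf{B})(\tau)}_p+(1+\tau)^{\gamma_n}\norm{\partial_x(\mathbf{u},\mathbf{B})(\tau)}_2,
\]
where $\gamma_n$ is the target rate from part (i). Write
\[
\mathbf{u}(t)=\mathscr{U}(t)\mathbf{u}_0+\int_0^t\mathscr{U}(t-\tau)\big(0,\operatorname{div}(\mathcal N_1+\mathcal N_2)\big)\,d\tau,
\]
and similarly for $\mathbf{B}$ with $\operatorname{div}\mathcal N_3$. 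Split the time integral at $t/2$.

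On $[0,t/2]$, put the derivative on the semigroup and use the $L^1$ bound $\norm{\mathcal N}_1\lesssim\norm{(a,\mathbf{m},\mathbf{B})}_2(\norm{(a,\mathbf{m},\mathbf{B})}_2+\norm{\partial_x\mathbf{m}}_2)$, which is at most $M^2(1+\tau)^{-n/2}$. On $[t/2,t]$, use $L^2$--$L^2$ bounds and estimate $\norm{\operatorname{div}\mathcal N}_2$ through $\norm{\cdot}_\infty\norm{\partial_x\cdot}_2$. Second derivatives of $\mathbf{m}$ appear only in the viscous term $\partial_k(a m_j/(a+1))$. I will control them by the higher-order energy norm $E^s_0$ from Theorem~\ref{T1.1}, interpolated against the decaying low norms, and this is where $s\ge s_0+1$ is needed. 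The time integrals produce $\int_0^{t}(1+t-\tau)^{-\frac n4-\frac12}(1+\tau)^{-\frac n2}\,d\tau$ type convolutions, which give the stated rates, with a logarithm exactly when $n=2$. The result is $M(t)\lesssim\eta_1+M(t)^2+\eta_0 M(t)$, which closes by continuity for $\eta_1$ small. $L^\infty$ bounds come from Gagliardo--Nirenberg applied to $\norm{\cdot}_2$ and higher derivatives, and $2<p<\infty$ follows by interpolation. This proves (i).

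\textbf{Comparison with the linear flow.} For (ii)--(iv), I estimate only the Duhamel integral, now using the decay from (i) inside it. The quadratic source decays like $(1+\tau)^{-n/2}$ in $L^1$, and the extra divergence gives $t^{-\frac n4-\frac12}\int_0^{t/2}(1+\tau)^{-n/2}\,d\tau$. This yields the $\log$ for $n=2$ and the improved rate for $n\ge3$. For $\mathbf{u}$ the improvement over the linear rate is only $t^{-1/4}$. That comes from the boundary layer: the divergence-form gain for $\mathscr{U}$ on the half space is $t^{-1/4}$ rather than $t^{-1/2}$ in $L^2$.

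For the $L^\infty$ statements I use the $L^1$--$L^\infty$ estimate on $[0,t/2]$. On $[t/2,t]$ I use $H^{s_0}$ bounds of the nonlinearity, which requires one more derivative and hence $s\ge s_0+2$ for $\mathbf{u}$. For $\mathbf{B}$ the reflection keeps the heat kernel estimates exact, so $s\ge s_0+1$ suffices there.

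\textbf{Main obstacle.} The hard part is the linear theory for divergence-form data on the half space with the no-slip condition, especially the derivative estimate in $n=2$. Nonlinear terms of the form $\operatorname{div}\mathcal N$ with $\mathcal N$ nonzero on the boundary do not gain a full $t^{-1/2}$. I would first try to quote the Kagei--Kobayashi estimates directly, since \eqref{1.7} is identical to their system. If the divergence-form versions are not available in that form, I would decompose the resolvent into its whole-space part plus a boundary correction and estimate the correction via its explicit Fourier--Laplace representation in $x'$.
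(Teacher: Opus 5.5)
\textbf{There is a genuine gap: your plan has no mechanism that makes the high-order Sobolev norms decay in time.} You use only the uniform bound $E^s_0$ from Theorem~\ref{T1.1}, interpolated against the decaying low norms. That cannot close the argument at the stated regularity.

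First, the $L^\infty$ rate in (i) cannot come from Gagliardo--Nirenberg. On the half space the linear rates are only $\|\partial_x^k\mathscr U(t)\mathbf u_0\|_2\lesssim t^{-\frac n4-q(k)}$, with $q(k)<\frac k2$ for $k\ge 2$. Lemma~\ref{L4.1} with $k>\frac n2$ therefore gives at best $t^{-\frac n4-\frac{n}{2k}q(k)}$, which is strictly slower than $t^{-\frac n2}$. Your $M(t)$ also already uses the $L^\infty$ norm to bound $\|\operatorname{div}\mathcal N\|_2$, so that norm has to be closed through Duhamel anyway.

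The paper runs a separate bootstrap for $\mathscr M_\infty(t)$. It uses the $L^1$--$L^\infty$ bound of Proposition~\ref{P3.1}(i) with $p=\infty$ on $[0,t-1]$, and the short-time bound of Proposition~\ref{P3.3}(iii) on $[t-1,t]$. Both steps require $\|\partial_x\mathcal N_1\|_{H^{[n/2]}}+\|\partial_x\mathcal N_2\|_{H^{[n/2]}}\lesssim(1+t)^{-\frac n2}$ (Proposition~\ref{P3.8}). That norm contains products such as $\partial_x a\,\partial_x^2\mathbf m$. For $n=2$, $s=3$, the natural interpolation against $E^3_0$ bounds this term only by $t^{-7/8}$, not $t^{-1}$.

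Since $\mathscr U$ does not smooth the density, decay of $\|\partial_x^k(a,\mathbf v,\mathbf B)\|_2$ up to $k=s$ has to come from the time-weighted energy inequality \eqref{2.14} with $\sigma=\sigma_1=\frac n4-\varepsilon_1$. The extra term $\sigma\int_0^t(1+\tau)^{2\sigma-1}\|(a,\mathbf v,\mathbf B)\|_2^2\,d\tau$ is controlled by the $L^2$ decay. This yields $E^s_{\sigma_1}+D^s_{\sigma_1}\lesssim\eta_1$ (Proposition~\ref{P3.7}).

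Part (iii) needs more still: $\|\mathcal N_1\|_{H^{[n/2]+1}}+\|\mathcal N_2\|_{H^{[n/2]+1}}\lesssim(1+t)^{-\frac n2-\frac12}$ (Proposition~\ref{P3.14}). That requires the higher derivatives to decay at the improved rate $\sigma_2=\frac n4+q(1)-2\varepsilon_1$. The paper gets this from a second weighted-energy and Duhamel loop (Propositions~\ref{P3.10}--\ref{P3.13}). The extra derivative in $s\ge s_0+2$ pays for that decay, not merely for putting the nonlinearity in $H^{s_0}$.

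\textbf{The time splitting also does not work as stated.} For $t-\tau\ge 1$, $\mathscr U(t-\tau)$ is merely bounded on $L^2$ and $H^{s_0}$. Estimating $\int_{t/2}^{t}$ through $L^2$ or $H^{s_0}$ norms of the nonlinearity therefore loses a full factor of $t$. In (iii) with $n=2$ you would need $\|\operatorname{div}\mathcal N\|_{H^{s_0}}\lesssim t^{-19/8}$, whereas $\|\mathbf m\cdot\nabla\mathbf m\|_2\le\|\mathbf m\|_\infty\|\partial_x\mathbf m\|_2$ only gives $t^{-15/8}$.

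The paper instead splits at $t-1$. On $[0,t-1]$ it uses only $L^1$-based estimates plus the exponentially decaying part. For $\mathbf u$ itself this is Proposition~\ref{P3.2}, so only $\|\mathcal N\|_1$ enters. For the improved $L^\infty$ rates it splits further at $\frac{t-1}{2}$. Short-time smoothing is used only on $[t-1,t]$.

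Your $[0,t/2]$ step also assumes an $L^1$ bound for $\partial_x\mathscr U(t)\operatorname{div}$, which is not among Propositions~\ref{P3.1}--\ref{P3.3}. With Proposition~\ref{P3.1}(i) one must instead bound $\|\operatorname{div}\mathcal N_1\|_1$. The paper handles its term $a\,\partial_x^2\mathbf m$ via $\|\partial_x^2\mathbf m\|_2\le\|\partial_x\mathbf m\|_2^{1/2}\|\partial_x^3\mathbf m\|_2^{1/2}$ together with H\"older in time against $\int_0^t\|\partial_x^3\mathbf u\|_2^2\,d\tau\le D^s_0(t)^2$. Pointwise interpolation against $E^s_0$ would give only $t^{-13/16}$ instead of $t^{-7/8}$ for $n=2$, $s=3$.

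Two smaller points:
\begin{itemize}
\item The divergence-form gain is $t^{-\frac12(1-\frac1p)}$ (Proposition~\ref{P3.2}), i.e.\ $t^{-1/4}$ in $L^2$. Your initial claim of $t^{-1/2}$ contradicts your later, correct statement.
\item Reducing $\mathscr B(t)\operatorname{div}\mathcal N_3$ to whole-space heat bounds uses $\mathcal N_3|_{x_n=0}=0$. This holds because $\mathbf m|_{x_n=0}=\mathbf 0$, but you should say so.
\end{itemize}
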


\begin{remark}
Since the linear solution satisfies
\[
\norm{\left(\mathscr{U}(t) \mathbf{u}_{0},\mathscr{B}(t) \mathbf{B}_{0}\right)}_{p}=O\left(t^{-\frac{n}{2}\left(1-\frac{1}{p}\right)}\right)
\]
for \(2\leq p\leq\infty\) as \(t\to\infty\), the estimates in (ii), (iii) and (iv) show that the large-time leading-order profile of \((\mathbf{u},\mathbf{B})(t)\) is precisely given by the linear solution \(\left(\mathscr{U}(t) \mathbf{u}_{0},\mathscr{B}(t) \mathbf{B}_{0}\right)\).
\end{remark}
The main novelty of this paper lies in a unified analysis of global well-posedness and large-time asymptotic behavior of strong solutions to the \(n\)-dimensional (\(n\ge 2\)) compressible viscous resistive magnetohydrodynamics (MHD) equations in the upper half-space \(\mathbb{R}_+^n\). We impose the standard no-slip boundary condition on the velocity field, together with two physically meaningful magnetic boundary conditions: the insulating boundary condition and the perfectly conducting boundary condition.

Under the assumption of small initial perturbations around a constant equilibrium state, we first establish the global existence and uniqueness of strong solutions in Sobolev spaces. The proof builds on uniform energy estimates with a careful tangential-normal decomposition of boundary contributions, combined with a standard bootstrapping argument.

Second, leveraging the complete decoupling of the linearized MHD system into a linearized compressible Navier–Stokes subsystem and a vector-valued heat equation subsystem, we combine sharp decay estimates for the linearized solutions with Duhamel’s principle to derive optimal \(L^p\) decay rates of order \(t^{-\frac{n}{2}(1-\frac{1}{p})}\) for \(2\le p\le\infty\). We further obtain decay estimates for higher-order derivatives via a weighted energy method with time-dependent weight functions.

Furthermore, we prove that the global nonlinear solutions converge to the linearized solutions as \(t\to\infty\) with an improved convergence rate. This rigorously shows that the large-time leading-order behavior of the compressible MHD flow in the half-space is dominated by linear dynamics, whereas nonlinear interactions only produce higher-order corrections with faster decay.

This work complements the existing literature with a self-contained, unified treatment of global well-posedness and sharp large-time asymptotics for the compressible viscous resistive MHD system in the half-space, covering both insulating and perfectly conducting magnetic boundary conditions. In particular, it extends the classical results of Kagei and Kobayashi \cite{Kagei-Kobayashi2002, Kagei-Kobayashi2005} for half-space compressible Navier–Stokes equations to the MHD system under both magnetic boundary regimes.

The remainder of this paper is structured as follows. In Section \ref{sec:2}, we first establish the uniform energy estimates for strong solutions in the Sobolev space \(H^s\) with \(s\geq s_0\). Based on these uniform a priori estimates and a standard bootstrapping argument, we further complete the proof of the global well-posedness result in Theorem \ref{T1.1}. Section \ref{sec:3} is devoted to the large-time decay analysis of global solutions. By virtue of the sharp decay properties of solutions to the associated linearized MHD system together with Duhamel’s principle, we derive the  \(H^1\)  decay rates for the global solutions and rigorously establish the asymptotic behaviors summarized in Theorem \ref{T1.3}. Finally, Appendix \ref{sec:4} collects a series of fundamental inequalities and auxiliary lemmas, which serve as crucial technical tools throughout the whole proof.

\section{A priori estimates}\label{sec:2}
We devote the present section to deriving a priori estimates for the strong solutions \((a, \mathbf{v}, \mathbf{B})\) associated with the compressible MHD system \eqref{1.4}.
For all subsequent analysis in this section, we work under the assumptions \(s \geq s_0\) and that the solution obeys the a priori bound given by
\begin{equation}\label{2.1}
  E_0^s(t)+D_0^s(t) \leq \eta,
\end{equation}
with \(\eta > 0\) being a sufficiently small constant to be determined later.
\enlargethispage{1\baselineskip} 
\pagebreak[1] 
\subsection{Energy evolution}\label{sub:2.1}
This subsection is devoted to deriving several fundamental energy inequalities, which are collected in the proposition that follows.
As a starting point, we define the material derivative by
\[
\frac{D a}{D t}\triangleq\partial_{t} a+  \mathbf{v} \cdot \nabla a,
\]
and we also introduce the following auxiliary notations for subsequent use:
\[
A_{k, j}\triangleq\left(\partial_{x'}^{k} \partial_{t}^{j}f_1, \partial_{x'}^{k} \partial_{t}^{j} a\right)+\left(\partial_{x'}^{k} \partial_{t}^{j} {f}_2, \partial_{x'}^{k} \partial_{t}^{j} \mathbf{v}\right),
\]
\[
B_{k,j}\triangleq  \left( \partial _{x'}^{k}\partial _{t}^{j}f_1,\partial _{x'}^{k}\partial _{t}^{j+1}a \right) +\left( \partial _{x'}^{k}\partial _{t}^{j}{f}_2,\partial _{x'}^{k}\partial _{t}^{j+1}\mathbf{v}\right),
\]
\[
C_{k,\ell ,j}\triangleq  \left( \partial _{x'}^{k}\partial _{x_n}^{\ell +1}\partial _{t}^{j}(\mathbf{v}\cdot \nabla a ),\partial _{x'}^{k}\partial _{x_n}^{\ell +1}\partial _{t}^{j}a \right),
\]
\[
D_{k, j}\triangleq\left(\partial_{x'}^{k} \partial_{t}^{j} {f}_3, \partial_{x'}^{k} \partial_{t}^{j} \mathbf{B}\right),
\]
\[
E_{k, j}\triangleq\left(\partial_{x'}^{k} \partial_{t}^{j} {f}_3, \partial_{x'}^{k} \partial_{t}^{j+1} \mathbf{B}\right).
\]
\begin{proposition}\label{P2.1}
Let the regularity exponent \(s\) satisfy \(s \geq s_0\), and suppose \((a,\mathbf{v},\mathbf{B})\) is a solution of system \eqref{1.4}, we have the following inequalities, which hold uniformly for all \(t \geq 0\):

\begin{enumerate}
\item[(i)]
\begin{align*}
&(1+t)^{2\sigma} \norm{\partial_{x'}^{k} \partial_{t}^{j} (a,\mathbf{v})(t)}_{2}^{2}+\int_{0}^{t}(1+\tau)^{2\sigma}  \norm{\partial_{x'}^{k} \partial_{\tau}^{j} \left(\frac{D a}{D\tau},\partial_x\mathbf{v}\right)}_{2}^{2}  \mathrm{d} \tau \\
\lesssim {}&\sum_{j=0}^{\left[\frac{s}{2}\right]}\norm{\partial_{t}^{j} (a,\mathbf{v})(0)}_{H^{s-2j}}^{2}+\sigma\int_{0}^{t}  (1+\tau)^{2\sigma-1} \norm{\partial_{x'}^{k} \partial_{\tau}^{j} (a,\mathbf{v})}_{2}^{2} \mathrm{d} \tau\\
&+\int_{0}^{t}(1+\tau)^{2\sigma}  \left(\norm{\partial_{x'}^{k} \partial_{\tau}^{j}( a  \operatorname{div} \mathbf{v})}_{2}^{2}+\abs{A_{k,j}}\right)  \mathrm{d} \tau
\end{align*}
for $0\leq k+2j \leq s$,

\item[(ii)]
\begin{align*}
& (1+t)^{2\sigma}\norm{\partial_{x}\partial_{x'}^{k} \partial_{t}^{j} \mathbf{v}(t)}_{2}^{2}+\int_{0}^{t}  (1+\tau)^{2\sigma}\norm{\partial_{x'}^{k} \partial_{\tau}^{j+1} (a, \mathbf{v})}_{2}^{2}  \mathrm{d} \tau \\
\lesssim {}&\sum_{j=0}^{\left[\frac{s}{2}\right]}\norm{\partial_{t}^{j} (a,\mathbf{v})(0)}_{H^{s-2j}}^{2}+ (1+t)^{2\sigma}\norm{\partial_{x'}^{k} \partial_{t}^{j}  a (t)}_{2}^{2}\\
&+\sigma\int_{0}^{t}  (1+\tau)^{2\sigma-1} \left(\norm{\partial_{x'}^{k} \partial_{\tau}^{j}  a }_{2}^{2}+\norm{\partial_{x}\partial_{x'}^{k} \partial_{\tau}^{j} \mathbf{v}}_{2}^{2}\right) \mathrm{d} \tau\\
&+\int_{0}^{t} (1+\tau)^{2\sigma} \left(\norm{\partial_{x}\partial_{x'}^{k} \partial_{\tau}^{j} \mathbf{v}}_{2}^{2}+\abs{B_{k,j}}\right)  \mathrm{d} \tau
\end{align*}
for $ 0\leq k+2j \leq s-1$,

\item[(iii)]
\begin{align*}
&(1+t)^{2\sigma}\norm{\partial_{x'}^{k} \partial_{x_n}^{\ell+1} \partial_{t}^{j}  a (t)}_{2}^{2}+\int_{0}^{t}  (1+\tau)^{2\sigma}\norm{\partial_{x'}^{k} \partial_{x_n}^{\ell+1} \partial_{\tau}^{j} \left(a, \frac{D a }{D\tau}\right)}_{2}^{2}  \mathrm{d} \tau \\
\lesssim {}&\sum_{j=0}^{\left[\frac{s}{2}\right]}\norm{\partial_{t}^{j} (a,\mathbf{v})(0)}_{H^{s-2j}}^{2}
+\sigma\int_{0}^{t}  (1+\tau)^{2\sigma-1}\norm{\partial_{x'}^{k} \partial_{x_n}^{\ell+1} \partial_{\tau}^{j}  a }_{2}^{2}  \mathrm{d} \tau \\
&+\int_{0}^{t}  (1+\tau)^{2\sigma}\left(\norm{\partial_{x} \partial_{x'}^{k+1} \partial_{x_n}^{\ell} \partial_{\tau}^{j} \mathbf{v}}_{2}^{2}+\norm{\partial_{x'}^{k} \partial_{x_n}^{\ell} \partial_{\tau}^{j+1} \mathbf{v}}_{2}^{2}\right)  \mathrm{d} \tau \\
&+\int_{0}^{t}  (1+\tau)^{2\sigma}\left(\norm{\partial_{x'}^{k} \partial_{x_n}^{\ell+1} \partial_{\tau}^{j}( a  \operatorname{div} \mathbf{v})}_{2}^{2}+\norm{\partial_{x'}^{k} \partial_{x_n}^{\ell} \partial_{\tau}^{j}{f}_2}_{2}^{2}+\abs{C_{k,\ell,j}}\right)  \mathrm{d} \tau
\end{align*}
for $ 0\leq k+\ell+2j \leq s-1$,

\item[(iv)]
\begin{align*}
& (1+t)^{2\sigma}\norm{\partial_{x'}^{k} \partial_{x_n}^{\ell+2} \partial_{t}^{j} \mathbf{v}(t)}_{2}^{2} \\
\lesssim {}&  (1+t)^{2\sigma}\Bigg(\norm{\partial_{x} \partial_{x'}^{k} \partial_{x_n}^{\ell} \partial_{t}^{j}  a (t)}_{2}^{2}+\norm{\partial_{x} \partial_{x'}^{k+1} \partial_{x_n}^{\ell} \partial_{t}^{j} \mathbf{v}(t)}_{2}^{2}\\
&+\norm{\partial_{x'}^{k} \partial_{x_n}^{\ell} \partial_{t}^{j+1} \mathbf{v}(t)}_{2}^{2}+\norm{\partial_{x'}^{k} \partial_{x_n}^{\ell} \partial_{t}^{j} {f}_2(t)}_{2}^{2}\Bigg)
\end{align*}
for $ 0\leq k+\ell+2j \leq s-2$,

\item[(v)]
\begin{align*}
&\int_{0}^{t} (1+\tau)^{2\sigma} \left(\norm{\partial_{x}^{\ell+1} \partial_{x'}^{k} \partial_{\tau}^{j}  a }_{2}^{2}+\norm{\partial_{x}^{\ell+2} \partial_{x'}^{k} \partial_{\tau}^{j} \mathbf{v}}_{2}^{2}\right)  \mathrm{d} \tau \\
\lesssim {}& \int_{0}^{t}  (1+\tau)^{2\sigma}\left(\norm{\partial_{x}^{\ell+1} \partial_{x'}^{k} \partial_{\tau}^{j} \left(\frac{D a }{D\tau}\right)}_{2}^{2}+\norm{\partial_{x}^{\ell} \partial_{x'}^{k} \partial_{\tau}^{j+1} \mathbf{v}}_{2}^{2}\right. \\
&\left.+\norm{\partial_{x}^{\ell+1} \partial_{x'}^{k} \partial_{\tau}^{j}( a  \operatorname{div} \mathbf{v})}_{2}^{2}+\norm{\partial_{x}^{\ell} \partial_{x'}^{k} \partial_{\tau}^{j} {f}_2}_{2}^{2}\right)  \mathrm{d} \tau
\end{align*}
for $ 0\leq k+\ell+2j \leq s-1$,

\item[(vi)]
\begin{align*}
& (1+t)^{2\sigma}\norm{\partial_{x}^{k} \partial_{t}^{j+1}  a (t)}_{2}^{2}+\int_{0}^{t} (1+\tau)^{2\sigma} \norm{\partial_{x}^{k} \partial_{\tau}^{j+1}  a }_{2}^{2}  \mathrm{d} \tau \\
\lesssim {}& (1+t)^{2\sigma}\norm{\partial_{x}^{k+1} \partial_{t}^{j} \mathbf{v}(t)}_{2}^{2}+\int_{0}^{t}(1+\tau)^{2\sigma}  \norm{\partial_{x}^{k+1} \partial_{\tau}^{j} \mathbf{v}}_{2}^{2}  \mathrm{d} \tau \\
&+ (1+t)^{2\sigma}\Bigg(\norm{\partial_{x}^{k} \partial_{t}^{j}( a  \operatorname{div} \mathbf{v})(t)}_{2}^{2}+\norm{\partial_{x}^{k} \partial_{t}^{j}(\mathbf{v} \cdot \nabla  a )(t)}_{2}^{2}\Bigg) \\
&+ \int_{0}^{t} (1+\tau)^{2\sigma} \left(\norm{\partial_{x}^{k} \partial_{\tau}^{j}( a  \operatorname{div} \mathbf{v})}_{2}^{2}+\norm{\partial_{x}^{k} \partial_{\tau}^{j}(\mathbf{v} \cdot \nabla  a )}_{2}^{2}\right)  \mathrm{d} \tau
\end{align*}
for $ 0\leq k+2j \leq s-1$,

\item[(vii)]
\begin{align*}
& (1+t)^{2\sigma}\norm{\partial_{x'}^{k} \partial_{t}^{j} \mathbf{B}(t)}_{2}^{2}+\int_{0}^{t} (1+\tau)^{2\sigma} \norm{ \partial_{x}\partial_{x'}^{k} \partial_{\tau}^{j} \mathbf{B}}_{2}^{2}  \mathrm{d} \tau \\
\lesssim {}&\sum_{j=0}^{\left[\frac{s}{2}\right]}\norm{\partial_{t}^{j} \mathbf{B}(0)}_{H^{s-2j}}^{2}+\sigma\int_{0}^{t}  (1+\tau)^{2\sigma-1} \norm{\partial_{x'}^{k} \partial_{\tau}^{j} \mathbf{B}}_{2}^{2} \mathrm{d} \tau+\int_{0}^{t}(1+\tau)^{2\sigma} \abs{D_{k,j}}  \mathrm{d} \tau
\end{align*}
for $ 0\leq k+2j \leq s$,

\item[(viii)]
\begin{align*}
&(1+t)^{2\sigma} \norm{\partial_{x}\partial_{x'}^{k} \partial_{t}^{j} \mathbf{B}(t)}_{2}^{2}+\int_{0}^{t}(1+\tau)^{2\sigma}  \norm{\partial_{x'}^{k} \partial_{\tau}^{j+1} \mathbf{B}}_{2}^{2}  \mathrm{d} \tau \\
\lesssim {}&\sum_{j=0}^{\left[\frac{s}{2}\right]}\norm{\partial_{t}^{j} \mathbf{B}(0)}_{H^{s-2j}}^{2}+\sigma\int_{0}^{t}  (1+\tau)^{2\sigma-1} \norm{\partial_{x}\partial_{x'}^{k} \partial_{\tau}^{j} \mathbf{B}}_{2}^{2} \mathrm{d} \tau+\int_{0}^{t} (1+\tau)^{2\sigma}\abs{E_{k,j}}  \mathrm{d} \tau
\end{align*}
for $ 0\leq k+2j \leq s-1$,

\item[(ix)]
\begin{align*}
& (1+t)^{2\sigma}\norm{\partial_{x'}^{k} \partial_{x_n}^{\ell+2} \partial_{t}^{j} \mathbf{B}(t)}_{2}^{2} \\
\lesssim {}&  (1+t)^{2\sigma}\Bigg(\norm{\partial_{x'}^{k+2} \partial_{x_n}^{\ell} \partial_{t}^{j} \mathbf{B}(t)}_{2}^{2}+\norm{\partial_{x'}^{k} \partial_{x_n}^{\ell} \partial_{t}^{j+1} \mathbf{B}(t)}_{2}^{2}+\norm{\partial_{x'}^{k} \partial_{x_n}^{\ell} \partial_{t}^{j} {f}_3(t)}_{2}^{2}\Bigg)
\end{align*}
for $ 0\leq k+\ell+2j \leq s-2$,

\item[(x)]
\begin{align*}
& \int_{0}^{t}(1+\tau)^{2\sigma}\norm{\partial_{x'}^{k} \partial_{x_n}^{\ell+2} \partial_{t}^{j} \mathbf{B}}_{2}^{2}  \mathrm{d} \tau \\
\lesssim {}&  \int_{0}^{t}(1+\tau)^{2\sigma}\Bigg(\norm{\partial_{x'}^{k+2} \partial_{x_n}^{\ell} \partial_{\tau}^{j} \mathbf{B}}_{2}^{2}+\norm{\partial_{x'}^{k} \partial_{x_n}^{\ell} \partial_{\tau}^{j+1} \mathbf{B}}_{2}^{2}+\norm{\partial_{x'}^{k} \partial_{x_n}^{\ell} \partial_{\tau}^{j} {f}_3}_{2}^{2}\Bigg) \mathrm{d} \tau
\end{align*}
for $ 0\leq k+\ell+2j \leq s-1$.
\end{enumerate}
\end{proposition}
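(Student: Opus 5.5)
The plan follows the Matsumura--Nishida scheme for the half space, as adapted by Kagei--Kobayashi. The key point is that tangential derivatives $\partial_{x'}^k$ and time derivatives $\partial_t^j$ commute with the boundary conditions. Hence $\partial_{x'}^k\partial_t^j\mathbf{v}=0$ on $x_n=0$. For $\mathbf{B}$, either $\partial_{x'}^k\partial_t^j(\mathbf{B}',\partial_{x_n}B_n)=0$ or $\partial_{x'}^k\partial_t^j(\partial_{x_n}\mathbf{B}',B_n)=0$ holds there. Normal derivatives are never taken in the $L^2$ energy identities. Instead they are recovered from the equations: from the momentum equation for $\mathbf{v}$ (items (iv), (v)), from the heat equation for $\mathbf{B}$ (items (ix), (x)), and from the transport--momentum coupling for $\partial_{x_n}a$ (item (iii)).

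\textbf{Items (i), (vii).} Apply $\partial_{x'}^k\partial_t^j$ to \eqref{1.4}. Pair the $a$-equation with $\partial_{x'}^k\partial_t^j a$ and the $\mathbf{v}$-equation with $\partial_{x'}^k\partial_t^j\mathbf{v}$, then multiply by $(1+t)^{2\sigma}$ and integrate in time. The pressure terms $(\operatorname{div}\cdot,a)+(\nabla a,\cdot)$ cancel after integration by parts, since the boundary term vanishes by no-slip. The viscous term gives $\alpha\|\partial_x\cdot\|^2+\beta\|\operatorname{div}\cdot\|^2$, with no boundary term. The time derivative of the weight produces the $\sigma(1+\tau)^{2\sigma-1}$ term, and the data produce the initial sum. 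To get $Da/D\tau$ on the left, rewrite $\partial_t a+\mathbf{v}\cdot\nabla a=-\operatorname{div}\mathbf{v}-a\operatorname{div}\mathbf{v}$. This bounds $\|\partial_{x'}^k\partial_t^j Da/D\tau\|$ by $\|\partial_x\cdot\mathbf{v}\|$ plus $\|\partial_{x'}^k\partial_t^j(a\operatorname{div}\mathbf{v})\|$. The remaining nonlinearities are collected into $A_{k,j}$.

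For (vii) the argument is identical for the heat equation. The boundary term $\int_{x_n=0}\partial_{x_n}\mathbf{B}\cdot\mathbf{B}\,dx'$ vanishes componentwise under either condition, because in each component one factor is zero.

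\textbf{Items (ii), (viii).} Pair instead with $\partial_{x'}^k\partial_t^{j+1}\mathbf{v}$ (respectively $\mathbf{B}$). The viscous term becomes $\frac12\frac{d}{dt}(\alpha\|\partial_x\cdot\|^2+\beta\|\operatorname{div}\cdot\|^2)$; boundary terms vanish since $\partial_t$ preserves the boundary conditions. The pressure term $(\nabla\partial a,\partial_t\partial\mathbf{v})$ is rewritten as $-\frac{d}{dt}(\partial a,\operatorname{div}\partial\mathbf{v})+(\partial_t\partial a,\operatorname{div}\partial\mathbf{v})$. The time-boundary piece is absorbed by Young's inequality into $(1+t)^{2\sigma}\|\partial a\|^2$ and $\|\partial_x\partial\mathbf{v}\|^2$, which gives the stated right side. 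Using the $a$-equation, $\partial_t\partial a$ is controlled by $\|\partial_x\partial\mathbf{v}\|$ plus the $f_1$ terms. This provides the $\|\partial^{j+1}_\tau a\|^2$ dissipation on the left, with the nonlinear parts placed in $B_{k,j}$.

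\textbf{Items (iv), (v), (ix), (x), (vi).} These are elliptic/Stokes-type recoveries.
\begin{itemize}
\item For (ix) and (x), write $\partial_{x_n}^2\mathbf{B}=\partial_t\mathbf{B}-\Delta'\mathbf{B}-f_3$ and differentiate; this is pointwise in time.
\item For (iv), the normal component of the momentum equation reads $(\alpha+\beta)\partial_{x_n}^2v_n=\partial_t v_n-\alpha\Delta' v_n-\beta\partial_{x_n}\operatorname{div}'\mathbf{v}'+\partial_{x_n}a-f_2^n$. The tangential components give $\alpha\partial_{x_n}^2\mathbf{v}'$ in terms of $\partial_x\partial_{x'}\mathbf{v}$, $\partial_{x'}a$, $\partial_t\mathbf{v}$ and $f_2$. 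Applying $\partial_{x'}^k\partial_{x_n}^\ell\partial_t^j$ gives (iv).
\item For (vi), use $\partial_t a=-\operatorname{div}\mathbf{v}-a\operatorname{div}\mathbf{v}-\mathbf{v}\cdot\nabla a$ directly.
\end{itemize}

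For (v) the difficulty is that the normal derivative of $a$ is not yet controlled. Eliminating via $\operatorname{div}\mathbf{v}=-Da/Dt-a\operatorname{div}\mathbf{v}$ turns the system into a Stokes problem $-\alpha\Delta\mathbf{v}+\nabla a=F$, $\operatorname{div}\mathbf{v}=G$, with $\mathbf{v}=0$ on the boundary. Here
\[
F=-\partial_t\mathbf{v}+f_2-\beta\nabla\operatorname{div}\mathbf{v},\qquad \operatorname{div}\mathbf{v}=-\frac{Da}{Dt}-a\operatorname{div}\mathbf{v}.
\]
I would invoke the Cattabriga/Agmon--Douglis--Nirenberg estimate for the half-space Stokes system, which the paper presumably places in Appendix~\ref{sec:4}. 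Induction on $\ell$ then yields (v).

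\textbf{Item (iii), the main obstacle.} Take the normal component of the momentum equation, solve for $\partial_{x_n}a$, and substitute $\operatorname{div}\mathbf{v}=-Da/Dt-a\operatorname{div}\mathbf{v}$. This gives
\[
(\alpha+\beta)\partial_{x_n}\frac{Da}{Dt}+\partial_{x_n}a=\text{(terms in }\partial_x\partial_{x'}\mathbf{v},\ \partial_t v_n,\ f_2,\ \partial_{x_n}(a\operatorname{div}\mathbf{v})).
\]
Apply $\partial_{x'}^k\partial_{x_n}^\ell\partial_t^j$ and pair with $\partial_{x'}^k\partial_{x_n}^{\ell+1}\partial_t^j a$. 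The product $(\partial Da/Dt,\partial a)$ equals $\frac12\frac{d}{dt}\|\partial a\|^2$ plus $C_{k,\ell,j}$, where $C_{k,\ell,j}$ is the convective commutator. No boundary integration is needed here, since the pairing is pointwise. This produces a damped ODE-type estimate, and the $Da/D\tau$ dissipation follows by reusing the same identity.

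I expect this step to be the main obstacle. The delicate points are keeping the convective term $\mathbf{v}\cdot\nabla a$ (which loses a derivative) isolated as $C_{k,\ell,j}$ rather than estimating it crudely, and arranging that only tangential or lower-normal-order velocity terms appear on the right.
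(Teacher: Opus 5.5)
Your proposal is correct and takes essentially the paper's route. The paper defers (i)--(vi) to Proposition 11.3 of Kagei--Kobayashi, which is exactly the Matsumura--Nishida scheme you reconstruct: tangential/time energy identities, the damped transport identity for $\partial_{x_n}a$ that isolates $C_{k,\ell,j}$, the half-space Stokes estimate for (v), and pointwise recovery of normal derivatives; it then treats (vii)--(x) for $\mathbf{B}$ as analogues of (i), (ii) and (iv), as you do. Two harmless slips: the Stokes (Cattabriga) estimate comes from the cited literature rather than the appendix, and $F$ should contain $+\beta\nabla\operatorname{div}\mathbf{v}$, which changes nothing since that term is controlled through $\operatorname{div}\mathbf{v}=-Da/Dt-a\operatorname{div}\mathbf{v}$.
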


\begin{proof}
The argument for statements (i) through (vi) is identical to the one presented in Proposition 11.3 of \cite{Kagei-Kobayashi2005}.
For items (vii) and (viii), one may proceed analogously to the proofs of (i) and (ii), while the derivations of (ix) and (x) closely parallel that of estimate (iv).
In light of this, we do not reproduce the full details of the proof.
\end{proof}
\subsection{Synthesis}\label{sub:2.2}
All the estimates derived in Proposition \ref{P2.1} are now combined to yield the proposition that follows.
To facilitate the statement of this result, we define the following nonlinearity for notational simplicity:
\begin{align*}
N_{\sigma}^{s}(t)\triangleq&\left(\sum_{j=0}^{\left[\frac{s-1}{2}\right]}(1+t)^{2\sigma}\left( \norm{\partial_{t}^{j}\left( a  \operatorname{div} \mathbf{v}\right)}_{H^{s-1-2j}}^{2}+\norm{\partial_{t}^{j} \left(\mathbf{v} \cdot \nabla  a\right)}_{H^{s-1-2j}}^{2}\right)\right.\\
& +\sum_{j=0}^{\left[\frac{s-2}{2}\right]}(1+t)^{2\sigma} \norm{\partial_{t}^{j}({f}_2,{f}_3)}_{H^{s-2-2j}}^{2}+\sum_{j=0}^{\left[\frac{s}{2}\right]} \int_{0}^{t} (1+\tau)^{2\sigma}\norm{\partial_{\tau}^{j}\left( a  \operatorname{div} \mathbf{v}\right)}_{H^{s-2j}}^{2} \mathrm{d} \tau\\
& +\sum_{j=0}^{\left[\frac{s-1}{2}\right]}\int_{0}^{t} (1+\tau)^{2\sigma}\left(\norm{\partial_{\tau}^{j} \left(\mathbf{v} \cdot \nabla  a\right)}_{H^{s-1-2j}}^{2}+\norm{\partial_{\tau}^{j} \left({f}_2,{f}_3\right)}_{H^{s-1-2j}}^{2}\right) \mathrm{d} \tau \\
& +\sum_{0 \leq k+2 j \leq s} \int_{0}^{t}  (1+\tau)^{2\sigma}\left(  \abs{A_{k,j}} + \abs{D_{k,j}}  \right)  \mathrm{d} \tau+\sum_{0 \leq k+\ell+2 j \leq s-1} \int_{0}^{t} (1+\tau)^{2\sigma}\abs{C_{k,\ell,j}} \mathrm{d} \tau\\
& \left. + \sum_{0 \leq k+2 j \leq s-1} \int_{0}^{t}(1+\tau)^{2\sigma}\left(  \abs{B_{k,j}} + \abs{E_{k,j}}  \right)\mathrm{d} \tau\right)^{\frac{1}{2}},
\end{align*}
then we can obtain the following
\begin{proposition}\label{P2.2}
Let the regularity exponent \(s\) satisfy \(s \geq s_0\), and suppose \((a,\mathbf{v},\mathbf{B})\) is a solution of system \eqref{1.4}. Then the inequality below holds for all \(t \geq 0\):
\begin{equation}\label{2.2}
\begin{aligned}
E_{\sigma}^{s}(t)^{2}+D_{\sigma}^{s}(t)^{2} \lesssim{}& \norm{( a _0,\mathbf{v}_0,\mathbf{B}_0)}_{H^s}^2+N_{\sigma}^{s}(t)^{2}+\sigma\bigg(\delta D_{\sigma}^{s}(t)^{2}+D_{0}^{s}(t)^{2}\\
&+\int_{0}^{t}  (1+\tau)^{2\sigma-1} \norm{({a}, \mathbf{v},\mathbf{B})}_{2}^{2} \mathrm{d} \tau\bigg)
\end{aligned}
\end{equation}
for any \(\delta>0\).
\end{proposition}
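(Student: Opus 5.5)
We obtain \eqref{2.2} by an induction over the number of normal derivatives and time derivatives, combining the items of Proposition \ref{P2.1} with suitable small weights. This mirrors the synthesis in Kagei--Kobayashi \cite{Kagei-Kobayashi2005}, with the magnetic estimates (vii)--(x) treated in parallel.

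\textbf{Step 1: tangential and time derivatives.} We sum (i) and (vii) over $0\le k+2j\le s$, and (ii) and (viii) over $0\le k+2j\le s-1$. The term $(1+t)^{2\sigma}\|\partial_{x'}^k\partial_t^j a\|_2^2$ on the right of (ii) is already controlled by (i). The term $\int(1+\tau)^{2\sigma}\|\partial_x\partial_{x'}^k\partial_\tau^j\mathbf{v}\|_2^2$ is the dissipation produced in (i) at the same level $(k,j)$. Hence we multiply (ii) by a small constant $\epsilon_1$ before adding it to (i). This controls all tangential and time derivatives of $(a,\mathbf{v},\mathbf{B})$ in $L^\infty_t$, together with $\partial_x$ of them and $\partial_t^{j+1}$ in $L^2_t$. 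The remainders are initial data, the $A,B,D,E$ terms and the $a\operatorname{div}\mathbf{v}$ terms, all of which belong to $N_\sigma^s(t)^2$. For the initial data we use the equation to bound $\|\partial_t^j(a,\mathbf{v},\mathbf{B})(0)\|_{H^{s-2j}}$ by $\|(a_0,\mathbf{v}_0,\mathbf{B}_0)\|_{H^s}$; this uses the smallness of the data.

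\textbf{Step 2: normal derivatives, by induction on $\ell$.} For $a$ we use (iii). Its right side involves $\partial_x\partial_{x'}^{k+1}\partial_{x_n}^\ell\mathbf{v}$ and $\partial_{x'}^k\partial_{x_n}^\ell\partial_\tau^{j+1}\mathbf{v}$, which have either one fewer normal derivative or one more tangential derivative. We then recover $\partial_{x_n}^{\ell+2}\mathbf{v}$ from (iv) and (v), pointwise in time and in $L^2_t$ respectively, using the already-controlled normal derivatives of $a$, or $Da/D\tau$, at the previous level. For $\mathbf{B}$, (ix) and (x) do the same job through the elliptic structure $\partial_{x_n}^2\mathbf{B}=\partial_t\mathbf{B}-\Delta'\mathbf{B}-f_3$. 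At each stage of the induction on $\ell$ we take the level $\ell+1$ estimate with a coefficient small relative to the level-$\ell$ ones. The ordering is by decreasing $k$ and $j$ (at fixed $k+\ell+2j$), so that only previously closed quantities appear with $O(1)$ constants. Item (vi) then gives the terms $\partial_t^{j+1}a$ in $E_\sigma^s$, in the norms $H^{s+1-2j}$. Finally, $\int\|\partial_x a\|_{H^{s-1}}^2$ follows from (v), since $\|\partial(Da/D\tau)\|$ and $\|\partial a\|$ differ only by $\mathbf{v}\cdot\nabla a$, which lies in $N_\sigma^s$.

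\textbf{Step 3: the $\sigma$-terms.} This is the main obstacle. The right sides contain $\sigma\int(1+\tau)^{2\sigma-1}\|\cdot\|_2^2$ applied to derivatives of order at least one, and to undifferentiated $(a,\mathbf{v},\mathbf{B})$. Undifferentiated quantities are simply kept as the last term of \eqref{2.2}. For a quantity $Q$ with at least one spatial derivative or a time derivative, we split the weight using Young's inequality:
\begin{align*}
(1+\tau)^{2\sigma-1}\le \delta(1+\tau)^{2\sigma}+C_\delta \quad \text{(for } 2\sigma-1\le 0\text{, or after interpolation in general)}.
\end{align*}
This bounds the term by $\delta D_\sigma^s(t)^2+C_\delta D_0^s(t)^2$. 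If $2\sigma-1>2\sigma\theta$ one instead interpolates the weights; to keep $C_\delta$ harmless we rename it into the implied constant, which depends on $\delta$. The delicate point is that $\|\partial_{x'}^k a\|_2$ with $k\ge1$ is not bounded by the dissipation norm $\partial_x a\in H^{s-1}$ at top order. When $k=s$, however, $\partial_{x'}^s a$ is contained in $\|\partial_x a\|_{H^{s-1}}$, so it is admissible. With this, every term on the right is of the form $\|(a_0,\mathbf{v}_0,\mathbf{B}_0)\|_{H^s}^2$, $N_\sigma^s(t)^2$, the $\sigma$-terms displayed, or quantities on the left multiplied by small constants, which we absorb. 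Summing up yields \eqref{2.2}.
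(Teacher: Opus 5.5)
Your proposal follows essentially the same route as the paper: you substitute (i) into (ii), run the induction on the number of normal derivatives through (iii)--(v) for $(a,\mathbf{v})$ and (ix)--(x) for $\mathbf{B}$, close the time derivatives of $a$ with (vi), and remove the $\sigma$-terms with the elementary bound $\int_0^t(1+\tau)^{2\sigma-1}|f|^2\,\mathrm{d}\tau\lesssim_\delta \delta\int_0^t(1+\tau)^{2\sigma}|f|^2\,\mathrm{d}\tau+\int_0^t|f|^2\,\mathrm{d}\tau$. The small coefficients and the caveats in your Step 3 are unnecessary but harmless: that weight bound holds for every $\sigma\ge 0$ without interpolation, every $\partial_{x'}^k a$ with $1\le k\le s$ already lies in $\|\partial_x a\|_{H^{s-1}}$, and in (v) the relevant conversion is between $Da/D\tau$ and $\partial_\tau a$, not $a$.
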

\begin{proof}
As a preliminary step, we recall the following elementary weighted inequality:
\[
\int_{0}^{t}  (1+\tau)^{2\sigma-1} \abs{f}^{2} \mathrm{d} \tau\lesssim \delta\int_{0}^{t}  (1+\tau)^{2\sigma}\abs{f}^{2} \mathrm{d} \tau+\int_{0}^{t}\abs{f}^{2} \mathrm{d} \tau
\]
for arbitrary \(\delta>0\).

Applying statement (i) of Proposition \ref{P2.1} then gives
\begin{equation}\label{2.3}
  \begin{aligned}
& \sum_{0 \leq k+2 j \leq s}\left((1+t)^{2\sigma}\norm{\partial_{x'}^{k} \partial_{t}^{j} (a,\mathbf{v})(t)}_{2}^{2}+\int_{0}^{t}  (1+\tau)^{2\sigma}\norm{\partial_{x'}^{k} \partial_{\tau}^{j} \left(\frac{D a}{D\tau},\partial_x\mathbf{v}\right)}_{2}^{2}  \mathrm{d} \tau \right)\\
\lesssim {}& \norm{( a _0,\mathbf{v}_0,\mathbf{B}_0)}_{H^s}^2+N_{\sigma}^{s}(t)^{2}+\sigma\bigg(\delta D_{\sigma}^{s}(t)^{2}+D_{0}^{s}(t)^{2}+\int_{0}^{t}  (1+\tau)^{2\sigma-1} \norm{({a}, \mathbf{v},\mathbf{B})}_{2}^{2} \mathrm{d} \tau\bigg).\\
\end{aligned}
\end{equation}

Substituting estimate \eqref{2.3} into statement (ii) of Proposition \ref{P2.1}, we deduce that
\begin{equation}\label{2.4}
  \begin{aligned}
& \sum_{0 \leq k+2j \leq s-1}\left((1+t)^{2\sigma}\norm{\partial_{x}\partial_{x'}^{k} \partial_{t}^{j} \mathbf{v}(t)}_{2}^{2}+\int_{0}^{t} (1+\tau)^{2\sigma} \norm{\partial_{x'}^{k} \partial_{\tau}^{j+1} (a, \mathbf{v})}_{2}^{2}  \mathrm{d} \tau\right) \\
\lesssim {}& \norm{( a _0,\mathbf{v}_0,\mathbf{B}_0)}_{H^s}^2+N_{\sigma}^{s}(t)^{2}+\sigma\bigg(\delta D_{\sigma}^{s}(t)^{2}+D_{0}^{s}(t)^{2}+\int_{0}^{t}  (1+\tau)^{2\sigma-1} \norm{({a}, \mathbf{v},\mathbf{B})}_{2}^{2} \mathrm{d} \tau\bigg).\\
\end{aligned}
\end{equation}

We proceed to establish the families of estimates \(\eqref{2.5}_{\ell}\), \(\eqref{2.7}_{\ell}\) for \(0 \leq \ell \leq s-1\) and \(\eqref{2.6}_{\ell}\) for \(0 \leq \ell \leq s-2\). The proof proceeds by induction on the index \(\ell\):
\begin{equation}\label{2.5}
  \begin{aligned}
& \sum_{0 \leq k+\ell+2j \leq s-1}\left((1+t)^{2\sigma}\norm{\partial_{x'}^{k} \partial_{x_n}^{\ell+1} \partial_{t}^{j}  a (t)}_{2}^{2}+\int_{0}^{t} (1+\tau)^{2\sigma} \norm{\partial_{x'}^{k} \partial_{x_n}^{\ell+1} \partial_{\tau}^{j} \left(a, \frac{D a }{D\tau}\right)}_{2}^{2}  \mathrm{d} \tau\right) \\
\lesssim {}& \norm{( a _0,\mathbf{v}_0,\mathbf{B}_0)}_{H^s}^2+N_{\sigma}^{s}(t)^{2}+\sigma\bigg(\delta D_{\sigma}^{s}(t)^{2}+D_{0}^{s}(t)^{2}+\int_{0}^{t}  (1+\tau)^{2\sigma-1} \norm{({a}, \mathbf{v},\mathbf{B})}_{2}^{2} \mathrm{d} \tau\bigg),\\
\end{aligned}
\end{equation}
\begin{equation}\label{2.6}
\begin{aligned}
 &\sum_{0 \leq k+\ell+2j \leq s-2}(1+t)^{2\sigma}\norm{\partial_{x'}^{k} \partial_{x_n}^{\ell+2} \partial_{t}^{j} \mathbf{v}(t)}_{2}^{2}\\
\lesssim {}& \norm{( a _0,\mathbf{v}_0,\mathbf{B}_0)}_{H^s}^2+N_{\sigma}^{s}(t)^{2}+\sigma\bigg(\delta D_{\sigma}^{s}(t)^{2}+D_{0}^{s}(t)^{2}+\int_{0}^{t}  (1+\tau)^{2\sigma-1} \norm{({a}, \mathbf{v},\mathbf{B})}_{2}^{2} \mathrm{d} \tau\bigg),\\
\end{aligned}
\end{equation}
\begin{equation}\label{2.7}
  \begin{aligned}
&\sum_{0 \leq k+\ell+2j \leq s-1}\int_{0}^{t}(1+\tau)^{2\sigma}\left(\norm{\partial_{x}^{\ell+1} \partial_{x'}^{k} \partial_{\tau}^{j}  a }_{2}^{2}+\norm{\partial_{x}^{\ell+2} \partial_{x'}^{k} \partial_{\tau}^{j} \mathbf{v}}_{2}^{2}\right)  \mathrm{d} \tau\\
\lesssim {}& \norm{( a _0,\mathbf{v}_0,\mathbf{B}_0)}_{H^s}^2+N_{\sigma}^{s}(t)^{2}+\sigma\bigg(\delta D_{\sigma}^{s}(t)^{2}+D_{0}^{s}(t)^{2}+\int_{0}^{t}  (1+\tau)^{2\sigma-1} \norm{({a}, \mathbf{v},\mathbf{B})}_{2}^{2} \mathrm{d} \tau\bigg).\\
\end{aligned}
\end{equation}

We begin with the base case \(\ell = 0\). Estimate \(\eqref{2.5}_{0}\) is a direct consequence of statement (iii) in Proposition \ref{P2.1} with \(\ell=0\), combined with inequalities \eqref{2.3} and \eqref{2.4}. Likewise, estimate \(\eqref{2.6}_{0}\) follows from statement (iv) of Proposition \ref{P2.1} with \(\ell=0\), together with \eqref{2.3}, \eqref{2.4} and \(\eqref{2.5}_{0}\). By a similar argument, \(\eqref{2.7}_{0}\) is derived from statement (v) of Proposition \ref{P2.1} with \(\ell=0\), \eqref{2.3}, \eqref{2.4} and \(\eqref{2.5}_{0}\). Hence \(\eqref{2.5}_{\ell}\)–\(\eqref{2.7}_{\ell}\) all hold for \(\ell=0\); in particular, this covers the full range \(0 \leq \ell \leq s-2\) when \(s=2\).

We next turn to the case \(\ell = 1\). By invoking statement (iii) of Proposition \ref{P2.1} at level \(\ell=1\), together with \eqref{2.3} and \(\eqref{2.7}_{0}\), one obtains estimate \(\eqref{2.5}_{1}\). In turn, \(\eqref{2.6}_{1}\) follows from statement (iv) of Proposition \ref{P2.1} with \(\ell=1\), using \eqref{2.4}, \(\eqref{2.5}_{0}\), \(\eqref{2.5}_{1}\) and \(\eqref{2.6}_{0}\). Similarly, \(\eqref{2.7}_{1}\) is deduced from statement (v) of Proposition \ref{P2.1} at \(\ell=1\), combined with \eqref{2.3}, \(\eqref{2.5}_{0}\) and \(\eqref{2.5}_{1}\). This proves all three estimates for \(\ell=1\), which covers the full range when \(s=3\).

For general \(s \geq 4\) and any index \(2 \leq \ell \leq s-2\), we assume as induction hypothesis that \(\eqref{2.5}_{m}\)–\(\eqref{2.7}_{m}\) are valid for all \(0 \leq m \leq \ell-1\). Under this hypothesis, estimate \(\eqref{2.5}_{\ell}\) follows from statement \((iii)_{\ell}\) of Proposition \ref{P2.1} together with \(\eqref{2.7}_{m}\) for \(m = \ell-2, \ell-1\). In turn, estimate \(\eqref{2.6}_{\ell}\) is obtained from statement \((iv)_{\ell}\) of Proposition \ref{P2.1}, using \(\eqref{2.5}_{m}\) with \(m = \ell-1, \ell\) and \(\eqref{2.6}_{m}\) with \(m = \ell-2, \ell-1\). Finally, estimate \(\eqref{2.7}_{\ell}\) follows from statement \((v)_{\ell}\) of Proposition \ref{P2.1}, combined with \eqref{2.3}, the family \(\eqref{2.5}_{m}\) for \(0 \leq m \leq \ell\), and \(\eqref{2.7}_{\ell-2}\). This completes the inductive step, so that \(\eqref{2.5}_{\ell}\), \(\eqref{2.6}_{\ell}\) and \(\eqref{2.7}_{\ell}\) hold for all \(0 \leq \ell \leq s-2\) whenever \(s \geq 2\).

By a similar reasoning, the estimate \(\eqref{2.5}_{s-1}\) corresponding to \(\ell = s-1\) is derived from statement (iii) of Proposition \ref{P2.1} with \(\ell = s-1\), combined with \eqref{2.3}, \(\eqref{2.7}_{s-3}\) and \(\eqref{2.7}_{s-2}\). The estimate \(\eqref{2.7}_{s-1}\) then follows from statement (v) of Proposition \ref{P2.1} at \(\ell = s-1\), together with \eqref{2.3}, the full family \(\eqref{2.5}_{\ell}\) for \(0 \leq \ell \leq s-1\), and \(\eqref{2.7}_{s-3}\). This concludes the proof of estimates \(\eqref{2.5}_{\ell}\) and \(\eqref{2.7}_{\ell}\) for \(0 \leq \ell \leq s-1\), as well as \(\eqref{2.6}_{\ell}\) for \(0 \leq \ell \leq s-2\).

Plugging estimates \eqref{2.3}, \eqref{2.4}, \eqref{2.6} and \eqref{2.7} into statement (vi) of Proposition \ref{P2.1} yields
\begin{equation}\label{2.8}
 \begin{aligned}
& \sum_{0 \leq k+2j \leq s-1}\left((1+t)^{2\sigma}\norm{\partial_{x}^{k} \partial_{t}^{j+1}  a (t)}_{2}^{2}+\int_{0}^{t} (1+\tau)^{2\sigma} \norm{\partial_{x}^{k} \partial_{\tau}^{j+1}  a }_{2}^{2}  \mathrm{d} \tau\right) \\
\lesssim {}& \norm{( a _0,\mathbf{v}_0,\mathbf{B}_0)}_{H^s}^2+N_{\sigma}^{s}(t)^{2}+\sigma\bigg(\delta D_{\sigma}^{s}(t)^{2}+D_{0}^{s}(t)^{2}+\int_{0}^{t}  (1+\tau)^{2\sigma-1} \norm{({a}, \mathbf{v},\mathbf{B})}_{2}^{2} \mathrm{d} \tau\bigg).\\
\end{aligned}
\end{equation}

Turning to the magnetic field, we deduce from statements (vii) and (viii) of Proposition \ref{P2.1} that
\begin{equation}\label{2.9}
  \begin{aligned}
& \sum_{0 \leq k+2 j \leq s}\left((1+t)^{2\sigma}\norm{\partial_{x'}^{k} \partial_{t}^{j} \mathbf{B}(t)}_{2}^{2}+\int_{0}^{t} (1+\tau)^{2\sigma} \norm{\partial_{x}\partial_{x'}^{k} \partial_{\tau}^{j}\mathbf{B}}_{2}^{2}  \mathrm{d} \tau\right) \\
\lesssim{}& \norm{( a _0,\mathbf{v}_0,\mathbf{B}_0)}_{H^s}^2+N_{\sigma}^{s}(t)^{2}+\sigma\bigg(\delta D_{\sigma}^{s}(t)^{2}+D_{0}^{s}(t)^{2}+\int_{0}^{t}  (1+\tau)^{2\sigma-1} \norm{({a}, \mathbf{v},\mathbf{B})}_{2}^{2} \mathrm{d} \tau\bigg)\\
\end{aligned}
\end{equation}
and
\begin{equation}\label{2.10}
  \begin{aligned}
& \sum_{0 \leq k+2 j \leq s-1}\left((1+t)^{2\sigma} \norm{\partial_{x}\partial_{x'}^{k} \partial_{t}^{j} \mathbf{B}(t)}_{2}^{2}+\int_{0}^{t}(1+\tau)^{2\sigma}  \norm{\partial_{x'}^{k} \partial_{\tau}^{j+1} \mathbf{B}}_{2}^{2}  \mathrm{d} \tau \right) \\
\lesssim{}& \norm{( a _0,\mathbf{v}_0,\mathbf{B}_0)}_{H^s}^2+N_{\sigma}^{s}(t)^{2}+\sigma\bigg(\delta D_{\sigma}^{s}(t)^{2}+D_{0}^{s}(t)^{2}+\int_{0}^{t}  (1+\tau)^{2\sigma-1} \norm{({a}, \mathbf{v},\mathbf{B})}_{2}^{2} \mathrm{d} \tau\bigg).\\
\end{aligned}
\end{equation}

We next establish the families \(\eqref{2.11}_{\ell}\) for \(0 \leq \ell \leq s-2\) and \(\eqref{2.12}_{\ell}\) for \(0 \leq \ell \leq s-1\), again by induction on \(\ell\):
\begin{equation}\label{2.11}
\begin{aligned}
 &\sum_{0 \leq k+\ell+2j \leq s-2}(1+t)^{2\sigma}\norm{\partial_{x'}^{k} \partial_{x_n}^{\ell+2} \partial_{t}^{j} \mathbf{B}(t)}_{2}^{2}\\
\lesssim{}& \norm{( a _0,\mathbf{v}_0,\mathbf{B}_0)}_{H^s}^2+N_{\sigma}^{s}(t)^{2}+\sigma\bigg(\delta D_{\sigma}^{s}(t)^{2}+D_{0}^{s}(t)^{2}+\int_{0}^{t}  (1+\tau)^{2\sigma-1} \norm{({a}, \mathbf{v},\mathbf{B})}_{2}^{2} \mathrm{d} \tau\bigg),\\
\end{aligned}
\end{equation}
\begin{equation}\label{2.12}
\begin{aligned}
&\sum_{0 \leq k+\ell+2j \leq s-1}\int_{0}^{t}(1+\tau)^{2\sigma}\norm{\partial_{x'}^{k} \partial_{x_n}^{\ell+2} \partial_{t}^{j} \mathbf{B}}_{2}^{2}\mathrm{d} \tau\\
\lesssim{}& \norm{( a _0,\mathbf{v}_0,\mathbf{B}_0)}_{H^s}^2+N_{\sigma}^{s}(t)^{2}+\sigma\bigg(\delta D_{\sigma}^{s}(t)^{2}+D_{0}^{s}(t)^{2}+\int_{0}^{t}  (1+\tau)^{2\sigma-1} \norm{({a}, \mathbf{v},\mathbf{B})}_{2}^{2} \mathrm{d} \tau\bigg).\\
\end{aligned}
\end{equation}

For the base case \(\ell = 0\), estimate \(\eqref{2.11}_{0}\) follows directly from statement (ix) of Proposition \ref{P2.1} with \(\ell=0\) and inequality \eqref{2.9}. Similarly, \(\eqref{2.12}_{0}\) is obtained from statement (x) of Proposition \ref{P2.1} with \(\ell=0\), combined with \eqref{2.9} and \eqref{2.10}. This proves both estimates at level \(\ell=0\); in particular, for \(s=2\), the full range \(0 \leq \ell \leq s-2\) is already covered.

Moving to \(\ell = 1\), estimate \(\eqref{2.11}_{1}\) follows from statement (ix) of Proposition \ref{P2.1} with \(\ell=1\) and inequality \eqref{2.10}. By the same token, \(\eqref{2.12}_{1}\) is derived from statement (x) of Proposition \ref{P2.1} with \(\ell=1\) together with \eqref{2.9}. Hence the estimates hold for \(\ell=1\), which covers the full range for \(s=3\).

For \(s \geq 4\) and \(2 \leq \ell \leq s-2\), assume inductively that \(\eqref{2.11}_{m}\) and \(\eqref{2.12}_{m}\) hold for all \(0 \leq m \leq \ell-1\). Then statement \((ix)_{\ell}\) of Proposition \ref{P2.1} combined with \(\eqref{2.11}_{\ell-2}\) yields \(\eqref{2.11}_{\ell}\), and statement \((x)_{\ell}\) together with \(\eqref{2.12}_{\ell-2}\) yields \(\eqref{2.12}_{\ell}\). By induction, both families of estimates are valid for all \(0 \leq \ell \leq s-2\) when \(s \geq 2\).

Finally, the estimate \(\eqref{2.12}_{s-1}\) at the highest level \(\ell = s-1\) follows from statement (x) of Proposition \ref{P2.1} with \(\ell = s-1\), together with \eqref{2.9} and \(\eqref{2.12}_{s-3}\). This completes the proof of \(\eqref{2.11}_{\ell}\) for \(0 \leq \ell \leq s-2\) and \(\eqref{2.12}_{\ell}\) for \(0 \leq \ell \leq s-1\).

Summing all the foregoing estimates \eqref{2.3}-\eqref{2.12} yields the desired inequality \eqref{2.2}, which concludes the proof.
\end{proof}

\subsection{Estimate of the Nonlinearity \(N_{\sigma}^{s}(t)\)}\label{sub:2.3}
To complete the energy estimate, it remains to derive the bound for the nonlinear term \(N_{\sigma}^{s}(t)\), which is provided in the proposition below.

\begin{proposition}\label{P2.3}
Let the regularity index \(s\) satisfy \(s \geq s_0\), and suppose that the energy satisfies \(E_{0}^{s}(t) \leq 1\) for every \(t \geq 0\). Then the inequality below is valid for all \(t \geq 0\):
  \begin{equation}\label{2.13}
    N_{\sigma}^{s}(t)^2\lesssim \delta \left(E_{\sigma}^{s}(t)^2+D_{\sigma}^{s}(t)^2\right)+\left(E_{\sigma}^{s}(t)^4+D_{\sigma}^{s}(t)^4\right)
  \end{equation}
for any \(\delta>0\).
\end{proposition}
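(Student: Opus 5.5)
We estimate each constituent of $N_{\sigma}^{s}(t)^2$ separately, using only two tools. The first is Moser-type product estimates in $H^k(\mathbb{R}_+^n)$. The second is the Sobolev embedding $H^{s_0}\hookrightarrow L^\infty$, which is available since $s\ge s_0=[\frac n2]+1$. Throughout we use the smallness assumption $E_0^s(t)\le 1$. It guarantees $\|a\|_{L^\infty}\le\frac12$, so the smooth functions $I(a),J(a)$ and $\frac{1}{a+1}$, together with their derivatives, are bounded. It also lets us control $\|I(a)\|_{H^k}+\|J(a)\|_{H^k}\lesssim\|a\|_{H^k}$ by the composition (Moser) estimate.

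Every term in $f_1,f_2,f_3$, in $a\operatorname{div}\mathbf{v}$ and in $\mathbf{v}\cdot\nabla a$ is at least quadratic. After applying $\partial_t^j\partial_x^\alpha$ and Leibniz, each piece has the form $(\partial^{\beta_1}\partial_t^{j_1}w_1)(\partial^{\beta_2}\partial_t^{j_2}w_2)\cdots$, with $w_i\in\{a,\mathbf{v},\mathbf{B}\}$. We put the factor of lower weighted order ($|\beta|+2j$ small) in $L^\infty$ and the other in $L^2$.

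For the pointwise-in-time terms we use the $\sup$-norms in $E$. For instance,
\[
(1+t)^{2\sigma}\|\partial_t^j(a\operatorname{div}\mathbf{v})\|_{H^{s-1-2j}}^2\lesssim E_0^s(t)^2\,E_\sigma^s(t)^2 ,
\]
because the top derivative count $s-2j$ on $\mathbf{v}$ or $a$ is covered by the $E$-norms and the weight is assigned to only one factor. Bounding $E_0^s\le E_\sigma^s$ gives the quartic term $E_\sigma^s(t)^4$.

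For the time-integrated terms we place the factor carrying the most derivatives in a dissipation norm and the other in $\sup_\tau$. This gives contributions $\lesssim E_0^s(t)^2D_\sigma^s(t)^2\lesssim E^4+D^4$. Examples are $\int_0^t(1+\tau)^{2\sigma}\|\partial_\tau^j(f_2,f_3)\|_{H^{s-1-2j}}^2\,d\tau$ and $\int_0^t(1+\tau)^{2\sigma}\|\partial_\tau^j(a\operatorname{div}\mathbf{v})\|_{H^{s-2j}}^2\,d\tau$. In the latter, the $H^{s+1}$-level derivative of $\mathbf{v}$ and the $H^s$-level derivative of $a$ are exactly what $D^s$ controls, since $D$ contains $\|\partial_x\mathbf{v}\|_{H^s}$ and $\|\partial_x a\|_{H^{s-1}}$. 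One point needs care: $D$ does not contain $\|a\|_2$ itself. A factor appearing undifferentiated is therefore always placed in $L^\infty$ and bounded by $E$.

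The inner-product terms $A_{k,j},B_{k,j},D_{k,j},E_{k,j}$ are handled by Cauchy–Schwarz when the nonlinearity loses no derivatives. For example,
\[
|D_{k,j}|\le\|\partial_{x'}^k\partial_t^jf_3\|_2\,\|\partial_{x'}^k\partial_t^j\mathbf{B}\|_2 .
\]
When $k+2j=s$, the derivative count exceeds what the dissipation gives on $\mathbf{B}$ undifferentiated in $x$. In that case we integrate by parts once in a tangential direction $x'$; this produces no boundary term, since $x'$ is tangential. We then use Young's inequality, and it is here that the $\delta(E_\sigma^2+D_\sigma^2)$ term is produced. The same treatment applies to the viscous terms $I(a)\Delta\mathbf{v}$ inside $A_{k,j}$, and to $B_{k,j},E_{k,j}$, which pair against $\partial_t^{j+1}$ quantities already in $D$.

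\textbf{The main obstacle} is the density transport terms: $A_{k,j}$ with top-order $\mathbf{v}\cdot\nabla\partial_{x'}^k\partial_t^j a$, and $C_{k,\ell,j}$. Here one derivative too many falls on $a$, and $a$ enjoys no parabolic smoothing. We handle this by the standard commutator trick. We write
\[
(\partial^\gamma(\mathbf{v}\cdot\nabla a),\partial^\gamma a)=(\mathbf{v}\cdot\nabla\partial^\gamma a,\partial^\gamma a)+([\partial^\gamma,\mathbf{v}\cdot\nabla]a,\partial^\gamma a),
\]
and integrate the first term by parts:
\[
(\mathbf{v}\cdot\nabla\partial^\gamma a,\partial^\gamma a)=-\tfrac12(\operatorname{div}\mathbf{v},|\partial^\gamma a|^2).
\]
The boundary term vanishes because of the no-slip condition $\mathbf{v}|_{x_n=0}=0$; this is crucial for $C_{k,\ell,j}$, where normal derivatives appear. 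The result is bounded by $\|\nabla\mathbf{v}\|_\infty\|\partial^\gamma a\|_2^2$, i.e. by $\|\nabla\mathbf{v}\|_{H^{s}}\|\partial^\gamma a\|_2^2$. Its weighted time integral is controlled by $E_\sigma D_0\,D_\sigma$ or $E\,D^2$-type quantities, hence by $\delta(E^2+D^2)+E^4+D^4$. The commutator is estimated by Kato–Ponce-type bounds with no derivative loss. The $\partial_t$ derivatives inside $\gamma$ cause no difficulty, since $\partial_t$ commutes with the boundary. Summing all contributions yields \eqref{2.13}.
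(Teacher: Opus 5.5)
\textbf{Gap: the zeroth-order energy pairings $A_{0,0}$ and $D_{0,0}$.} Your overall toolkit matches what the paper imports from Kagei--Kobayashi through Lemmas~\ref{L4.1}--\ref{L4.3}: product and Sobolev estimates, one factor placed in the dissipation, tangential integration by parts, and the transport identity using $\mathbf{v}|_{x_n=0}=\mathbf{0}$. As written, however, the argument fails for $A_{0,0}$ and $D_{0,0}$. These terms are cubic rather than quartic. Their test functions $a$, $\mathbf{v}$, $\mathbf{B}$ are undifferentiated, so $D_\sigma^s$ does not control them.

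Under your rule that undifferentiated factors go into $L^\infty$ and are bounded by $E$, you get for instance $|D_{0,0}|\le\|f_3\|_2\|\mathbf{B}\|_2\lesssim E_0^s(t)\,\|\mathbf{B}\|_2\,\|\partial_x(\mathbf{v},\mathbf{B})\|_2$. This contains only one factor that is square integrable in time. After bounding the rest by $E_\sigma^s$, you are left with $\int_0^t(1+\tau)^{\sigma}\|\partial_x(\mathbf{v},\mathbf{B})\|_2\,d\tau$, which $D_\sigma^s(t)$ controls only up to a factor $t^{1/2}$. The same failure hits your commutator bound at $\gamma=0$: $\int_0^t(1+\tau)^{2\sigma}\|\nabla\mathbf{v}\|_\infty\|a\|_2^2\,d\tau$ is not bounded by $E_\sigma^sD_0^sD_\sigma^s$, because $\|a\|_2$ is not dissipated. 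Likewise for $(a\operatorname{div}\mathbf{v},a)$, $(J(a)\nabla a,\mathbf{v})$, $(\mathbf{v}\cdot\nabla\mathbf{v},\mathbf{v})$ and $(\mathbf{B}\cdot\nabla\mathbf{B},\mathbf{v})$.

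\textbf{The fix.} You need Gagliardo--Nirenberg/Sobolev interpolation of the undifferentiated factors, so that a second dissipative factor is extracted. This is the role of Lemma~\ref{L4.1} and of the exponents $a_j<1$ in Lemma~\ref{L4.2}(i). Every term of $A_{0,0}$ and $D_{0,0}$ is bounded, up to bounded coefficients, by $\|\partial_x^m w\|_2\,\|gh\|_2$ with $m\in\{1,2\}$, $w,g,h\in\{a,\mathbf{v},\mathbf{B}\}$, and $g,h$ undifferentiated. For example, $|(\mathbf{v}\cdot\nabla a,a)|\le\|\nabla a\|_2\|\mathbf{v}a\|_2$, $|(a\operatorname{div}\mathbf{v},a)|\le\|\nabla\mathbf{v}\|_2\|a^2\|_2$, and $|(I(a)\Delta\mathbf{v},\mathbf{v})|\lesssim\|\partial_x^2\mathbf{v}\|_2\|a\mathbf{v}\|_2$. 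The first factor is dissipated. For the second factor:
\begin{itemize}
\item for $n\ge3$, use $\|gh\|_2\le\|g\|_{L^n}\|h\|_{L^{2n/(n-2)}}\lesssim\|g\|_{H^s}\|\partial_xh\|_2$;
\item for $n=2$, use Ladyzhenskaya, $\|gh\|_2\le\|g\|_4\|h\|_4\lesssim(\|g\|_2\|h\|_2)^{1/2}(\|\partial_xg\|_2\|\partial_xh\|_2)^{1/2}$, followed by Cauchy--Schwarz in time.
\end{itemize}
This gives $\int_0^t(1+\tau)^{2\sigma}\big(|A_{0,0}|+|D_{0,0}|\big)\,d\tau\lesssim E_0^s(t)D_\sigma^s(t)^2\lesssim\delta D_\sigma^s(t)^2+E_\sigma^s(t)^4+D_\sigma^s(t)^4$, as required.

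\textbf{Two smaller points.}
\begin{itemize}
\item For $k=0$, $2j=s$, the viscous part $(\partial_t^j(I(a)\Delta\mathbf{v}),\partial_t^j\mathbf{v})$ of $A_{0,j}$ loses a derivative, since the dissipation gives only $\partial_t^j\mathbf{v}\in H^1$. There is no tangential derivative to move here. You must integrate by parts in $x_n$ as well; the boundary term vanishes because $\partial_t^j\mathbf{v}|_{x_n=0}=\mathbf{0}$.
\item At the endpoint $s=s_0$, the energy does not place $\nabla\mathbf{v}$ in $L^\infty$. So pointwise-in-time terms such as $\partial_x^{s-1}a\,\operatorname{div}\mathbf{v}$ require an $L^p$--$L^q$ split, or the product estimate $H^s\times H^{s-1}\to H^{s-1}$ (valid since $s>n/2$). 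Your rule ``lower-order factor in $L^\infty$'' does not cover them.
\end{itemize}
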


\begin{proof}
The desired bound follows by applying Lemmas \ref{L4.1}-\ref{L4.3}, following an argument analogous to that in \cite{Kagei-Kobayashi2005}; we therefore omit the full details of the derivation.
\end{proof}
\subsection{Global energy estimate}\label{sub:2.4}
Substituting estimate \eqref{2.13} into inequality \eqref{2.2} and choosing \(\delta\) sufficiently small, we deduce that

\begin{proposition}\label{P2.4}
Let the regularity index \(s\) satisfy \(s \geq s_0\), and suppose that \(E_{0}^{s}(t) \leq 1\) for every \(t \geq 0\).
Then the inequality below holds for all \(t \geq 0\):
\begin{equation}\label{2.14}
  \begin{aligned}
E_{\sigma}^{s}(t)^{2}+D_{\sigma}^{s}(t)^{2} \lesssim{}& \norm{(a_0, \mathbf{v}_{0}, \mathbf{B}_0)}_{H^s}^{2}+E_{\sigma}^{s}(t)^{4}+D_{\sigma}^{s}(t)^{4}\\
&+\sigma\bigg(D_{0}^{s}(t)^{2}+\int_{0}^{t}  (1+\tau)^{2\sigma-1} \norm{({a}, \mathbf{v},\mathbf{B})}_{2}^{2} \mathrm{d} \tau\bigg).\\
\end{aligned}
\end{equation}
\end{proposition}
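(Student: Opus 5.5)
The plan is to substitute the nonlinear bound \eqref{2.13} of Proposition \ref{P2.3} directly into the synthesized estimate \eqref{2.2} of Proposition \ref{P2.2}, and then absorb every small multiple of $E_\sigma^s(t)^2+D_\sigma^s(t)^2$ into the left-hand side. Throughout, we work under the hypothesis $E_0^s(t)\le 1$, which is exactly what Proposition \ref{P2.3} requires. No new energy identities are needed; the argument is pure bookkeeping of the two $\delta$-parameters. These come from different sources, so I will name them $\delta_1$ (the one in \eqref{2.2}) and $\delta_2$ (the one in \eqref{2.13}).

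\emph{Step 1.} Write \eqref{2.2} with an explicit implicit constant $C_1$, independent of $\delta_1$ and $t$:
\[
E_\sigma^s(t)^2+D_\sigma^s(t)^2\le C_1\Big(\|(a_0,\mathbf v_0,\mathbf B_0)\|_{H^s}^2+N_\sigma^s(t)^2+\sigma\delta_1D_\sigma^s(t)^2+\sigma D_0^s(t)^2+\sigma\!\int_0^t(1+\tau)^{2\sigma-1}\|(a,\mathbf v,\mathbf B)\|_2^2\,d\tau\Big).
\]

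\emph{Step 2.} Insert \eqref{2.13}, written as $N_\sigma^s(t)^2\le C_2\delta_2(E_\sigma^s(t)^2+D_\sigma^s(t)^2)+C_{2,\delta_2}(E_\sigma^s(t)^4+D_\sigma^s(t)^4)$. This produces the term $C_1C_2\delta_2(E_\sigma^s(t)^2+D_\sigma^s(t)^2)$ on the right.

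\emph{Step 3.} Choose $\delta_2$ so that $C_1C_2\delta_2\le\frac14$. For fixed $\sigma$, choose $\delta_1$ so that $C_1\sigma\delta_1\le\frac14$; when $\sigma=0$ this term is absent anyway. Both terms can then be moved to the left-hand side. The absorption is legitimate because, for fixed $t$, every quantity involved is finite for a strong solution on $[0,t]$. After moving them, the left-hand side is at least $\frac12(E_\sigma^s(t)^2+D_\sigma^s(t)^2)$. Multiplying by $2$ gives \eqref{2.14}, with constants depending on $\sigma$, $s$, $n$ and the fixed choices of $\delta_1,\delta_2$, but not on $t$.

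\emph{Main obstacle.} The one point needing care is that the constant in front of $E^4+D^4$ in \eqref{2.13} depends on $\delta_2$. Since $\delta_2$ is fixed once and for all in terms of $C_1$ and $C_2$, this dependence is harmless. One must also check that the weighted term $\sigma\delta_1D_\sigma^s(t)^2$ comes with a $\delta_1$ that can be chosen freely, which is precisely what the elementary weighted inequality in the proof of Proposition \ref{P2.2} provides. Everything else is absorption.
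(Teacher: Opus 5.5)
Your proposal is correct and follows the paper's own argument: substitute \eqref{2.13} into \eqref{2.2} and choose $\delta$ small enough to absorb the $\delta\bigl(E_\sigma^s(t)^2+D_\sigma^s(t)^2\bigr)$ terms into the left-hand side; splitting the single $\delta$ into $\delta_1,\delta_2$ just makes that bookkeeping explicit. One small imprecision, which does not affect the argument: in \eqref{2.2} the coefficient of $\sigma D_0^s(t)^2$ does depend on $\delta_1$ through the weighted inequality, so $C_1$ cannot be independent of $\delta_1$ for every term, but the absorption only needs the coefficient $C_1\sigma\delta_1$ of $D_\sigma^s(t)^2$ to be $\delta_1$-independent, and it is.
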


Setting \(\sigma = 0\), the energy estimate \eqref{1.9} from Theorem \ref{T1.1} follows directly from the a priori bound \eqref{2.1} combined with inequality \eqref{2.14}.

\section{Asymptotic behavior of global solutions}\label{sec:3}
In this section we investigate large-time behavior of solutions to the nonlinear problem \eqref{1.5}.
\subsection{Decay rates of linear operators}\label{sub:3.1}
Solutions to system \eqref{1.5} admit an integral representation via Duhamel’s principle.
Building on the decay rate estimates for solutions of the associated linearized system, we derive the decay rates of solutions to the nonlinear problem by means of weighted energy estimates.
To characterize the decay behavior of the linearized operator, we split the analysis into two parts. The first part concerns the decay rates of solutions to the linearized compressible Navier–Stokes equations on the half-space under the no-slip boundary condition.

The decay rates for the semigroup \(\mathscr{U}(t) \mathbf{u}_{0}\) were already established in \cite{Kagei-Kobayashi2005}. We simply recall the relevant results needed in the present paper in Propositions \ref{P3.1}–\ref{P3.3} and omit the corresponding proofs.

For notational simplicity, we introduce the function \(q(\ell)\) for \(\ell \geq 0\), defined by
\begin{equation}\label{3.1}
  q(\ell) \triangleq
\begin{cases}
\min\left\{ \dfrac{\ell}{2}, \dfrac{\ell}{4} + \dfrac{1}{8} \right\} & (n = 2), \\[1.2em]
\min\left\{ \dfrac{\ell}{2}, \dfrac{\ell}{4} + \dfrac{3}{8} \right\} & (n \ge 3).
\end{cases}
\end{equation}
Analogously, we define the regularity index \(k_p(\ell)\) for \(\ell \in \mathbb{Z}\) and \(2 \leq p \leq \infty\) by
\begin{equation}\label{3.2}
  k_p(\ell) \triangleq
\begin{cases}
\max\left\{ 0, \ell \right\} & (p = 2), \\[1.2em]
\max\left\{ 0, \left[\ell + n\left( \dfrac{1}{2} - \dfrac{1}{p} \right)\right] \right\} + 1 & (2 < p \le \infty).
\end{cases}
\end{equation}

Given a vector \(\mathbf{u} = (a, \mathbf{m})\) with \(a \in W^{\ell,p}\) and \(\mathbf{m} = (m_1, \dots, m_n) \in W^{k,q}\), the norm on the product space \(W^{\ell,p} \times W^{k,q}\), denoted \(\norm{\mathbf{u}}_{W^{\ell,p} \times W^{k,q}}\), is given by
\[
\norm{\mathbf{u}}_{W^{\ell,p} \times W^{k,q}} \triangleq \norm{a}_{W^{\ell,p}} + \norm{\mathbf{m}}_{W^{k,q}}.
\]

\begin{proposition}\label{P3.1}
  Let \(n \geq 2\) be the spatial dimension, let \(\ell \in \mathbb{Z}\) and let \(2 \leq p \leq \infty\). Then there exists a positive constant \(c\) such that the estimates below hold uniformly for all \(t \geq 1\).
  \begin{enumerate}
    \item[(i)] For \(\ell \leq n-1\), one has
    \begin{align*}
  \norm{\partial_x^\ell \mathscr{U}(t) \mathbf{u}_0}_p \lesssim{}
& t^{-\frac{n}{2}\left(1-\frac{1}{p}\right)-\frac{2}{p}q\left(\ell\right)-\left(1-\frac{2}{p}\right)\frac{\ell}{2}} \norm{\mathbf{u}_0}_1 \\
& + e^{-c t} \left(\norm{a_0}_{H^{k_p\left(\ell\right)}}+ \norm{\mathbf{m}_0}_{H^{k_p\left(\ell-1\right)}}  \right).
\end{align*}

    \item[(ii)] For \(\ell \geq n\), it holds that
    \begin{align*}
      &\norm{\partial_x^\ell \mathscr{U}(t) \mathbf{u}_0}_p\\
     \lesssim&
      \begin{cases}
            t^{-\frac{n}{2}\left(1-\frac{1}{p}\right) - \frac{n-1}{4}\left(1-\frac{2}{p}\right) - \frac{1}{4p} - \frac{\ell}{4}} \norm{\mathbf{u}_0}_1
            + e^{-c t} \left(\norm{a_0}_{H^{k_p\left(\ell\right)}}+ \norm{\mathbf{m}_0}_{H^{k_p\left(\ell-1\right)}}  \right), & n=2, \\[1.5em]
            t^{-\frac{n}{2}\left(1-\frac{1}{p}\right) - \frac{n-1}{4}\left(1-\frac{2}{p}\right) - \frac{3}{4p} - \frac{\ell}{4}} \norm{\mathbf{u}_0}_1
            + e^{-c t} \left(\norm{a_0}_{H^{k_p\left(\ell\right)}}+ \norm{\mathbf{m}_0}_{H^{k_p\left(\ell-1\right)}}  \right), & n \ge 3.
    \end{cases}
    \end{align*}
  \end{enumerate}
\end{proposition}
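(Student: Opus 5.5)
This is the linear estimate of Kagei and Kobayashi \cite{Kagei-Kobayashi2005}, so the main job is to reconstruct their resolvent analysis. I would prove it for the half-space compressible Stokes semigroup $\mathscr{U}(t)$ with no-slip condition in four steps: partial Fourier transform, a spectral splitting, a low-frequency analysis, and then $L^p$ and derivative counting.

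\emph{Fourier transform and resolvent.} I would take the Fourier transform in the tangential variable $x'$ (dual variable $\xi'$) and the Laplace transform in $t$. System \eqref{1.7} then becomes, for each $\xi'$, a system of ODEs in $x_n>0$ with parameter $\lambda$. I would solve the resolvent problem
\[
(\lambda + L)\hat{\mathbf u} = \hat{\mathbf u}_0
\]
explicitly. The solution is the whole-space part plus a boundary-layer correction built from the two characteristic roots. The first is the parabolic root $\sqrt{|\xi'|^2+\lambda/\alpha}$. The second is the root coming from $\lambda^2+(\alpha+\beta)\lambda|\xi|^2+|\xi|^2=0$, i.e. the compressible (sound) root. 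The correction's coefficients are fixed by $\hat{\mathbf m}|_{x_n=0}=0$, and I would write down their Lopatinskii determinant explicitly.

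\emph{Spectral splitting.} I would write $\mathscr{U}(t)=\mathscr{U}_{\rm low}(t)+\mathscr{U}_{\rm high}(t)$ by inserting cut-offs in $|\xi'|$ and in $\lambda$ (small versus large). The high-frequency part satisfies
\[
\norm{\mathscr{U}_{\rm high}(t)\mathbf u_0}_{H^k}\lesssim e^{-ct}\left(\norm{a_0}_{H^k}+\norm{\mathbf m_0}_{H^{k-1}}\right).
\]
This comes from energy estimates with time weights $e^{ct}$ and the spectral gap away from the origin. For $p>2$ I would then apply the Sobolev embedding $H^{k_p(\ell)}\subset W^{\ell,p}$, which produces exactly the index $k_p$ in \eqref{3.2}. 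I would also check that the one-derivative loss for $\mathbf m$, visible in $k_p(\ell-1)$, comes from the hyperbolic-parabolic smoothing.

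\emph{Low-frequency part: the main obstacle.} For the low-frequency part I would deform the inverse Laplace contour to the left, near the branch points. This yields pointwise bounds for the kernel of the form
\[
|\partial_{x'}^{k}\partial_{x_n}^{j}\hat K(\xi',x_n,t)|\lesssim \big(e^{-c|\xi'|^2t}+\ldots\big),
\]
where the boundary-layer piece decays like $e^{-c(x_n^2/t)}$ but each normal derivative gains only $t^{-1/4}$ rather than $t^{-1/2}$. The reason is that the parabolic and sound roots interact at the boundary and create a layer of width $t^{1/2}$ in a degenerate combination. Tracking this precisely is the core difficulty. It is exactly what produces the cap $q(\ell)=\min\{\ell/2,\ell/4+\text{const}\}$, the dimension-dependent constants $1/8$ versus $3/8$, and the $t^{-\ell/4}$ behaviour for $\ell\ge n$ in (ii). My first step here would be to bound the boundary term in $L^1_{x'}L^\infty_{x_n}$ and in $L^2$ separately, with constants depending on $n$.

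\emph{Interpolation and assembly.} The $L^1\to L^2$ rate $t^{-n/4-q(\ell)}$ and the $L^1\to L^\infty$ rate $t^{-n/2-\ell/2}$ would be obtained from Plancherel in $x'$ and from the kernel bounds. Interpolating between them with weight $2/p$ gives the exponent in (i). Carrying out the same interpolation with the $\ell\ge n$ regime of the boundary layer gives (ii). Adding the exponentially decaying high-frequency part, and restricting to $t\ge1$, completes the proof.
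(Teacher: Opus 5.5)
The paper gives no proof of this proposition. It is quoted from Kagei--Kobayashi \cite{Kagei-Kobayashi2005}, so your opening sentence is all the paper does. As a reconstruction, your architecture is the right kind of argument: Fourier transform in $x'$, Laplace transform in $t$, a boundary correction built from the shear root $\mu_1=\sqrt{|\xi'|^2+\lambda/\alpha}$ and the sound root $\mu_2=\sqrt{|\xi'|^2+\lambda^2/(1+(\alpha+\beta)\lambda)}$, a low/high splitting, and interpolation. The final assembly is sound. Statement (i) is exactly the $\frac{2}{p}$-interpolation of an $L^1\to L^2$ rate $t^{-\frac{n}{4}-q(\ell)}$ with an $L^1\to L^\infty$ rate $t^{-\frac{n}{2}-\frac{\ell}{2}}$. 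Statement (ii) is the same interpolation with two different endpoints. The $L^2$ endpoint is $t^{-\frac{n}{4}-\frac{\ell}{4}-\frac{1}{8}}$, with $\frac38$ in place of $\frac18$ when $n\ge 3$. The $L^\infty$ endpoint is $t^{-\frac{n}{2}-\frac{n-1}{4}-\frac{\ell}{4}}$, which agrees with $t^{-\frac{n}{2}-\frac{\ell}{2}}$ exactly at $\ell=n-1$ and is weaker beyond it.

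\textbf{The gap.} These endpoints are the entire content of the proposition, and you assert them rather than derive them. Moreover, the mechanism you offer for them cannot work. A layer of width $t^{1/2}$ with profile $e^{-cx_n^2/t}$ gains $t^{-1/2}$ per normal derivative, not $t^{-1/4}$. Such heat-type boundary terms do occur, from the branch point $\lambda=-\alpha|\xi'|^2$ of $\mu_1$, but they only produce heat-kernel rates.

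\textbf{The missing idea.} The anomalous term comes from $\mu_1$ evaluated near the acoustic branch points $\lambda\approx\pm i|\xi'|$ of $\mu_2$. There $\operatorname{Re}\mu_1\sim|\xi'|^{1/2}$. Since the damping $e^{-c|\xi'|^2t}$ localizes $|\xi'|\lesssim t^{-1/2}$, this is a viscous (Stokes) sublayer of thickness $t^{1/4}$, and that is where the $\frac{\ell}{4}$ law comes from. To obtain $q(\ell)$, the $\frac18$ versus $\frac38$ dichotomy, and the threshold $\ell=n$, you must do three things. You must isolate this term from the inverted boundary system, bound its coefficient as $\xi'\to 0$, and compute its $L^2$ and $L^\infty$ sizes. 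The proposal contains none of this.

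Two smaller points:
\begin{itemize}
\item A cut-off in $\lambda$ does not produce a solution of \eqref{1.7}; only a cut-off in $\xi'$ commutes with the problem. So the exponential bound for $\mathscr{U}_{\mathrm{high}}(t)$ cannot come from $e^{ct}$-weighted energy estimates alone. It needs resolvent bounds on a contour in $\operatorname{Re}\lambda<0$.
\item The index $k_p(\ell-1)$ for $\mathbf{m}_0$ reflects a one-derivative \emph{gain} from parabolic smoothing, not a loss.
\end{itemize}
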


\begin{proposition}\label{P3.2}
  Let \(\mathbf{u}_0 = (0, \mathbf{m}_0)\), and fix an exponent \(2 \leq p \leq \infty\). Then there exists a positive constant \(c > 0\) such that the estimate
\[
\norm{\mathscr{U}(t) \partial_x \mathbf{u}_0}_p \lesssim t^{-\frac{n}{2}\left(1-\frac{1}{p}\right)-\frac{1}{2}\left(1-\frac{1}{p}\right)} \norm{\mathbf{m}_0}_1 + e^{-c t} \norm{\mathbf{m}_0}_{H^{k_p(0)}}
\]
holds uniformly for all \(t \geq 1\).
\end{proposition}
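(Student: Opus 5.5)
The plan is to deduce Proposition \ref{P3.2} from the structure of the linearized problem \eqref{1.7}, using duality together with the kernel estimates of Kagei and Kobayashi that underlie Proposition \ref{P3.1}. The key observation is that $\partial_x \mathbf{u}_0=(0,\partial_x\mathbf{m}_0)$ has vanishing "mass": its low-frequency part carries one extra derivative. We want to transfer this derivative from the data onto the solution operator, while respecting the boundary.

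\emph{Step 1 (splitting).} Decompose $\mathscr{U}(t)$ by a tangential-frequency and normal-resolvent splitting into a low-frequency part $\mathscr{U}_0(t)$ and a high-frequency part $\mathscr{U}_\infty(t)$, exactly as in the construction behind Proposition \ref{P3.1}. The high-frequency part is exponentially decaying in $H^{k}$-type norms. Since $\partial_x\mathbf{m}_0$ is one derivative rougher than $\mathbf{m}_0$, I would bound $\norm{\mathscr{U}_\infty(t)(0,\partial_x\mathbf{m}_0)}_p$ by $e^{-ct}\norm{\mathbf{m}_0}_{H^{k_p(0)}}$. The derivative is absorbed by the parabolic smoothing of the momentum component for $t\ge1$, and $k_p(-1)+1$ matches the index $k_p(0)$.

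\emph{Step 2 (low frequencies, tangential derivatives).} For $\partial_{x'}$ this is easy, since $\partial_{x'}$ commutes with $\mathscr{U}(t)$. We get $\mathscr{U}(t)\partial_{x'}\mathbf{u}_0=\partial_{x'}\mathscr{U}(t)\mathbf{u}_0$, and the tangential-derivative decay of the low-frequency kernel gives an extra factor $t^{-1/2}$. Interpolating between $L^1\to L^2$ and $L^1\to L^\infty$ bounds then yields the factor $t^{-\frac12(1-\frac1p)}$.

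\emph{Step 3 (low frequencies, normal derivative) — the main obstacle.} For $\partial_{x_n}$ the operator does not commute with the no-slip boundary condition. I would write the low-frequency solution via its Green matrix $G(t,x,y)$ and integrate by parts in $y_n$:
\[
\int G(t,x,y)\,\partial_{y_n}\mathbf{m}_0(y)\,\mathrm dy=-\int \partial_{y_n}G(t,x,y)\,\mathbf{m}_0(y)\,\mathrm dy .
\]
There is no boundary term, because $\mathbf{m}_0$ satisfies the compatibility condition $\mathbf{m}_0|_{y_n=0}=0$. It then remains to estimate $\sup_y\norm{\partial_{y_n}G(t,\cdot,y)}_p$. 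From the explicit Kagei--Kobayashi representation, the $y$-derivative of the velocity-input columns consists of heat-type and diffusion-wave parts. These gain $t^{-1/2}$ in $L^\infty$ but only partially in $L^2$; the boundary-layer part has a weaker gain, which is why $q(\ell)$ appears elsewhere. The claimed rate $t^{-\frac n2(1-\frac1p)-\frac12(1-\frac1p)}$ gives no gain at $p=2$ beyond $t^{-\frac n4}$ and a full $t^{-1/2}$ at $p=\infty$. So I would prove the endpoint bounds $\sup_y\norm{\partial_yG}_2\lesssim t^{-n/4}$ and $\sup_y\norm{\partial_yG}_\infty\lesssim t^{-n/2-1/2}$ and interpolate. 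The $L^2$ endpoint is the delicate one: I would argue by duality, using that $\mathscr{U}(t)^*$ solves the adjoint system with the same boundary condition. This turns $\norm{\mathscr{U}(t)\partial_x\mathbf{u}_0}_2$ into $\sup_{\norm{\phi}_2\le1}(\mathbf{m}_0,\partial_x\mathscr{U}^*(t)\phi)$, which is bounded by $\norm{\mathbf{m}_0}_1\,\norm{\partial_x\mathscr{U}^*(t)\phi}_\infty$. That last factor is controlled by Proposition \ref{P3.1}(i) for the adjoint, with $\ell=1$, $p=\infty$ and $L^2$ data (an $L^2\to L^\infty$ version), giving $t^{-n/4-\dots}$ up to exponentially small terms. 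Combining the three steps gives the proposition.
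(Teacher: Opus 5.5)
There is a genuine gap in Step 3, and it begins with an arithmetic slip. At $p=2$ the factor $\frac12(1-\frac1p)$ equals $\frac14$, not $0$. So the statement demands $\norm{\mathscr{U}(t)\partial_x\mathbf{u}_0}_2\lesssim t^{-\frac n4-\frac14}\norm{\mathbf{m}_0}_1$, not $t^{-\frac n4}$. The claimed rate $t^{-\frac{n+1}{2}(1-\frac1p)}$ is exactly the interpolation between $t^{-\frac n4-\frac14}$ at $p=2$ and $t^{-\frac n2-\frac12}$ at $p=\infty$. Your endpoints $\sup_y\norm{\partial_yG(t,\cdot,y)}_2\lesssim t^{-\frac n4}$ and $\sup_y\norm{\partial_yG(t,\cdot,y)}_\infty\lesssim t^{-\frac n2-\frac12}$ interpolate only to $t^{-\frac n2(1-\frac1p)-\frac12(1-\frac2p)}$. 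This misses the claim by a factor $t^{\frac{1}{2p}}$ for every $p<\infty$. The loss is not cosmetic. The paper uses precisely the case $p=2$ to bound $\norm{I_2(t)}_2$ via $\int_0^{t-1}(t-\tau)^{-\frac n4-\frac14}(1+\tau)^{-\frac n2}\,\mathrm{d}\tau$. With $(t-\tau)^{-\frac n4}$ instead, one gets $t^{-\frac12}\log t$ when $n=2$, so $M_{\frac{n}{4}}^{0}(t)$ in \eqref{3.5} cannot be closed, and item (ii) of Theorem \ref{T1.3} is lost for every $n$.

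Your duality argument does not recover the missing $t^{-\frac14}$, because it is circular. Since $(\mathscr{U}^*(t))^*=\mathscr{U}(t)$, the quantity $\sup_{\norm{\phi}_2\le1}\norm{\partial_x(\mathscr{U}^*(t)\phi)_{\mathbf{m}}}_\infty$ is, by the same duality, exactly the $L^1\to L^2$ operator norm of $\mathbf{m}_0\mapsto\mathscr{U}(t)(0,\partial_x\mathbf{m}_0)$. That is the $p=2$ case you are trying to prove. Proposition \ref{P3.1} is an $L^1\to L^p$ statement and provides no $L^2\to L^\infty$ bound. Such a bound could only hold for the low-frequency part anyway, since the high-frequency density mode does not smooth.

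What is actually needed is a direct estimate of $\partial_{y_n}$ applied to the momentum-input columns of the low-frequency Green matrix, with the sharp gain $t^{-\frac12(1-\frac1p)}$ in $L^p_x$ uniformly in $y$. This must come from the explicit Fourier--Laplace representation of Kagei and Kobayashi. The paper does not carry this out; it quotes the proposition from \cite{Kagei-Kobayashi2005}, so your sketch would have to reproduce that kernel analysis.

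Your Step 2 is fine, and in fact it follows at once from Proposition \ref{P3.1}(i) with $\ell=1$. There $q(1)\ge\frac38>\frac14$, and the high-frequency index on $\mathbf{m}_0$ is exactly $k_p(0)$. So all the difficulty sits in $\partial_{x_n}$.

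Two smaller points on the normal-derivative part:
\begin{itemize}
\item In Step 1, the relation $k_p(-1)+1=k_p(0)$ fails for $p=2$ (left side $1$, right side $0$). It also fails whenever $0<n(\frac12-\frac1p)<1$ (left side $2$, right side $1$). So applying the high-frequency part of Proposition \ref{P3.1} to $(0,\partial_{x_n}\mathbf{m}_0)$ loses one derivative relative to the statement, and a separate argument is needed to recover it.
\item The boundary term in your integration by parts vanishes because the momentum-input columns of $G(t,x,y)$ vanish at $y_n=0$ (the adjoint problem also satisfies no-slip). It does not rest on $\mathbf{m}_0|_{y_n=0}=0$. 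The proposition does not assume that trace condition, and it fails for $\mathscr{N}_1+\mathscr{N}_2$, which is the data the paper feeds into it.
\end{itemize}
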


\begin{proposition}\label{P3.3}
Let \(\mathbf{u}_0 = (a_0,\mathbf{m}_0)\). For any non-negative integer \(k\), there exists a constant \(\tilde{\varepsilon} > 0\) such that the inequalities below hold uniformly for all \(0 < t \leq 1\):
\begin{enumerate}
\item[(i)]
\[
\norm{\partial_{x'}^{k} \mathscr{U}(t) \mathbf{u}_0}_{H^{1} \times L^{2}} \lesssim \norm{\mathbf{u}_0}_{H^{k+1} \times H^{k}},
\]
\item[(ii)]
\[
\norm{\partial_{x'}^{k} \partial_{x} \mathscr{U}(t) \mathbf{u}_0}_{2} \lesssim t^{-\frac{1}{2}} \norm{\mathbf{u}_0}_{H^{k+1} \times H^{k}},
\]
\item[(iii)] in particular, when \(a_0 = 0\), i.e., \(\mathbf{u}_0 = (0,\mathbf{m}_0)\), we have
\[
\norm{\mathscr{U}(t)\mathbf{u}_0}_{\infty} \lesssim t^{-(1-\tilde{\varepsilon})} \norm{\mathbf{m}_0}_{H^{\left[\frac{n}{2}\right]}},
\]
where \(\tilde{\varepsilon}\) is a sufficiently small positive constant.
\end{enumerate}
\end{proposition}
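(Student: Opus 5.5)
The plan is to prove all three estimates by direct energy methods for the linear system \eqref{1.7} on the short interval \(0<t\le 1\). On this interval no decay is needed, only uniform bounds and the parabolic smoothing rate of \(\mathbf{m}\). The structure mirrors the linear version of Proposition \ref{P2.1} with the nonlinear terms \(f_1,f_2\) set to zero. Tangential derivatives \(\partial_{x'}^k\) commute with the half-space and preserve \(\mathbf{m}|_{x_n=0}=\mathbf{0}\), so we may apply \(\partial_{x'}^k\) to \eqref{1.7} and work with \((\partial_{x'}^k a,\partial_{x'}^k\mathbf{m})\), which solves the same system with data \((\partial_{x'}^k a_0,\partial_{x'}^k \mathbf{m}_0)\). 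Hence it suffices, for each of (i)–(ii), to treat \(k=0\) with the data norms shifted by \(k\).

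\textbf{Step 1: part (i).} Take the \(L^2\) inner product of \eqref{1.7} with \((a,\mathbf{m})\). The coupling terms \((\operatorname{div}\mathbf{m},a)+(\nabla a,\mathbf{m})\) cancel, and the boundary terms vanish by the no-slip condition. This gives \(\norm{(a,\mathbf{m})(t)}_2^2+\int_0^t\norm{\partial_x\mathbf{m}}_2^2\lesssim\norm{(a_0,\mathbf{m}_0)}_2^2\).

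The \(H^1\) bound for \(a\) comes in two pieces.
\begin{enumerate}
\item[(a)] Tangential derivatives \(\partial_{x'}a\) are handled by the same computation after applying \(\partial_{x'}\).
\item[(b)] For the normal derivative we use the standard Matsumura–Nishida device. Substituting \(\operatorname{div}\mathbf{m}=-\partial_t a\) into the \(n\)-th momentum equation gives
\[
(\alpha+\beta)\partial_{x_n}\partial_t a+\partial_{x_n}a=\partial_t m_n-\alpha\Delta' m_n+\alpha\partial_{x_n}\operatorname{div}'\mathbf{m}',
\]
where only tangential second derivatives and \(\partial_t m_n\) appear. Multiplying by \(\partial_{x_n}a\) gives a damped ODE inequality for \(\norm{\partial_{x_n}a}_2^2\). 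Its right side is controlled by \(\int_0^t\big(\norm{\partial_t\mathbf{m}}_2^2+\norm{\partial_x\partial_{x'}\mathbf{m}}_2^2\big)\).
\end{enumerate}
The time-derivative and tangential estimates (analogues of (ii) in Proposition \ref{P2.1}) bound this integral by \(\norm{\mathbf{u}_0}_{H^1\times L^2}^2\) plus the tangential \(H^1\) energy. On \(t\le 1\), Gronwall losses are bounded constants. This yields (i).

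\textbf{Step 2: part (ii).} The \(a\)-component of \(\partial_x\partial_{x'}^k\mathscr{U}(t)\mathbf{u}_0\) is already bounded by (i) without any singularity, since \(t^{-1/2}\ge1\). For \(\mathbf{m}\) we use a time-weighted estimate. Multiply the momentum equation by \(-t\,\Delta\mathbf{m}\), or more safely by \(t\,\partial_t\mathbf{m}\), which respects the boundary condition:
\[
\frac{d}{dt}\Big(t\big(\alpha\norm{\nabla\mathbf{m}}_2^2+\beta\norm{\operatorname{div}\mathbf{m}}_2^2\big)\Big)+t\norm{\partial_t\mathbf{m}}_2^2\lesssim\norm{\nabla\mathbf{m}}_2^2+t\norm{\nabla a}_2^2 .
\]
Integrating over \((0,t)\) and using Step 1 gives \(t\norm{\partial_x\mathbf{m}(t)}_2^2\lesssim\norm{\mathbf{u}_0}_{H^{1}\times L^2}^2\). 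Applying the same argument to \(\partial_{x'}^k\)-differentiated quantities gives (ii).

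\textbf{Step 3: part (iii).} This is the step I expect to be the main obstacle, because \(\mathbf{m}_0\) has only \(H^{[n/2]}\) regularity, which just fails to embed into \(L^\infty\).
\begin{enumerate}
\item[(a)] Iterating Step 2 with time weights \(t^{j}\), and recovering normal derivatives of \(\mathbf{m}\) from the equation as in (iv) and (ix) of Proposition \ref{P2.1}, we expect
\[
\norm{\mathbf{m}(t)}_{H^{[n/2]}}\lesssim\norm{\mathbf{m}_0}_{H^{[n/2]}},\qquad \norm{\mathbf{m}(t)}_{H^{[n/2]+1}}\lesssim t^{-1/2}\norm{\mathbf{m}_0}_{H^{[n/2]}}
\]
(data with \(a_0=0\)). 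This is a parabolic gain of one derivative.
\item[(b)] Interpolate, for small \(\theta>0\),
\[
\norm{\mathbf{m}}_\infty\lesssim\norm{\mathbf{m}}_{H^{[n/2]}}^{1-\theta'}\norm{\mathbf{m}}_{H^{[n/2]+1}}^{\theta'}\lesssim t^{-\theta'/2}\norm{\mathbf{m}_0}_{H^{[n/2]}} ,
\]
where \(\theta'=\tfrac n2-[\tfrac n2]+\theta\le \tfrac12+\theta\). The exponent \(\theta'/2\) is at most \(1-\tilde\varepsilon\).
\item[(c)] For \(a\), use \(a(t)=-\int_0^t\operatorname{div}\mathbf{m}\,d\tau\) and \(\norm{\operatorname{div}\mathbf{m}}_\infty\lesssim\tau^{-(1-\tilde\varepsilon)}\norm{\mathbf{m}_0}_{H^{[n/2]}}\). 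This last bound follows from the same interpolation one level higher, using weight \(\tau^{2}\) for two extra derivatives. The resulting singularity is integrable, giving a bounded \(a\) for \(t\le1\).
\end{enumerate}
Together these give (iii). The delicate point is making the higher-order smoothing rigorous near \(x_n=0\), where only tangential and time derivatives are estimated by energy. Normal derivatives must be recovered through the equation, and the hyperbolic \(a\)-component, which does not smooth, enters that recovery. It is controlled only because \(a_0=0\) and \(a\) is driven by \(\operatorname{div}\mathbf{m}\).
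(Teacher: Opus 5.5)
Since the paper does not prove Proposition~\ref{P3.3} (it imports it from Kagei--Kobayashi), your argument has to stand on its own. It does not: Step~1 fails to give (i) with the stated data norm, and Step~3 rests on smoothing bounds that are false without a compatibility condition.

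\textbf{Step 1.} In $H^{k+1}\times H^{k}$ the momentum carries one derivative \emph{less} than the density. After your (correct) reduction to $k=0$ you must show $\norm{a(t)}_{H^1}+\norm{\mathbf m(t)}_2\lesssim\norm{a_0}_{H^1}+\norm{\mathbf m_0}_2$. Two pieces of your argument cost more than this.
\begin{enumerate}
\item[(a)] Your piece (a) differentiates the system in $x'$, so it costs $\norm{\partial_{x'}\mathbf m_0}_2$.
\item[(b)] In your piece (b), the forcing $\int_0^t\big(\norm{\partial_\tau\mathbf m}_2^2+\norm{\partial_x\partial_{x'}\mathbf m}_2^2\big)\,d\tau$ is not bounded by $\norm{\mathbf m_0}_2$ at all. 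Already for the heat flow on $\mathbb{R}^n$, Plancherel gives $\int_0^t\norm{\Delta e^{\tau\Delta}\mathbf m_0}_2^2\,d\tau=\frac12\int|\xi|^2(1-e^{-2|\xi|^2t})|\hat{\mathbf m}_0(\xi)|^2\,d\xi$. This is comparable to $\norm{\partial_x\mathbf m_0}_2^2$ and is infinite for generic $\mathbf m_0\in L^2$. The analogue of Proposition~\ref{P2.1}(ii) you invoke indeed carries $\norm{\partial_x\mathbf m_0}_2$ on its right side.
\end{enumerate}
So your argument only yields (i)--(ii) with $H^{k+1}\times H^{k+1}$. That lost derivative is exactly what the paper cannot spare, because it applies (i)--(ii) to data $(0,\operatorname{div}(\mathscr N_1+\mathscr N_2)(\tau))$ that are controlled only in $L^2$.

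The missing idea is to use cancellation in time instead of $L^2_t$ dissipation. One way:
\begin{enumerate}
\item[1.] Set $\mathbf W(t)=\int_0^t\mathbf m\,d\tau$. Integrating the momentum equation in time gives the stationary Dirichlet Lam\'e problem $-\alpha\Delta\mathbf W-\beta\nabla\operatorname{div}\mathbf W=\mathbf m_0-\mathbf m(t)-\nabla\int_0^t a\,d\tau$ with $\mathbf W|_{x_n=0}=\mathbf 0$.
\item[2.] The mass equation gives $\nabla a(t)=\nabla a_0-\nabla\operatorname{div}\mathbf W(t)$.
\item[3.] Combine the half-space estimate $\norm{\partial_x^2\mathbf W}_2\lesssim\norm{\alpha\Delta\mathbf W+\beta\nabla\operatorname{div}\mathbf W}_2$, the basic $L^2$ energy bound, and Gronwall on $[0,1]$. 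This gives the $H^1\times L^2$ bound.
\item[4.] Applying this to $\partial_{x'}^k\mathbf u_0$ gives (i).
\end{enumerate}
With (i) in hand, your time-weighted Step~2 is fine.

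\textbf{Step 3.} The bounds in 3(a) are $\norm{\mathbf m(t)}_{H^{[n/2]}}\lesssim\norm{\mathbf m_0}_{H^{[n/2]}}$ and $\norm{\mathbf m(t)}_{H^{[n/2]+1}}\lesssim t^{-1/2}\norm{\mathbf m_0}_{H^{[n/2]}}$, with full derivatives including normal ones. They are false unless $\mathbf m_0|_{x_n=0}=\mathbf 0$.
\begin{enumerate}
\item[(a)] No such compatibility is assumed in (iii), and there is no reason for it to hold for the data $\operatorname{div}(\mathscr N_1+\mathscr N_2)(\tau)$ to which (iii) is applied.
\item[(b)] For a nonzero trace, the no-slip condition creates a boundary layer of width $\sqrt t$, and $\norm{\partial_{x_n}\mathbf m(t)}_2$ blows up like $t^{-1/4}$, as for the Dirichlet heat semigroup. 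So this is not merely a matter of making the smoothing rigorous near $x_n=0$.
\item[(c)] Your 3(c), which uses two further full derivatives, inherits the same defect.
\end{enumerate}

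Compatibility-free information is available only for tangential derivatives, which commute with $\mathscr U(t)$. By (i)--(ii), for $j\le[n/2]$ and $a_0=0$,
\[
\norm{\partial_{x'}^j a(t)}_{H^1}+\norm{\partial_{x'}^j\mathbf m(t)}_2+t^{1/2}\norm{\partial_x\partial_{x'}^j\mathbf m(t)}_2\lesssim\norm{\mathbf m_0}_{H^j}.
\]
The $L^\infty$ bound has to be built from these through an anisotropic embedding such as $\norm{f}_\infty^2\lesssim\norm{f}_{L^2_{x_n}H^r_{x'}}\norm{\partial_{x_n}f}_{L^2_{x_n}H^r_{x'}}$ with $r>\frac{n-1}{2}$.
\begin{enumerate}
\item[(a)] For even $n$ one may take $r=\frac n2$, and this already gives (iii) with rate $t^{-1/4}$.
\item[(b)] For odd $n$ the tangential regularity is borderline, since $r=[n/2]$ is not admissible, so an extra argument is needed. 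Your representation $a(t)=-\int_0^t\operatorname{div}\mathbf m\,d\tau$ is one possible ingredient. The weak exponent $1-\tilde\varepsilon$ leaves room for the resulting losses.
\end{enumerate}
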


We next present the second family of estimates, which describe the decay behavior of solutions to the heat equation on the half-space subject to homogeneous Neumann or Dirichlet boundary conditions.

\begin{proposition}\label{P3.4}
  Let \(n \geq 2\). Given integers \(k, j \in \mathbb{Z}\) and exponents \(1 \leq q \leq p \leq \infty\), the following estimates hold for all \(t > 0\).
  \begin{enumerate}
    \item[(i)]
    \[
    \norm{\partial_x^k \partial_t^j\mathscr{B}(t) \mathbf{B}_0}_p \lesssim t^{-\frac{n}{2}\left(\frac{1}{q}-\frac{1}{p}\right)-\frac{1}{2}(k+2j)} \norm{\mathbf{B}_0}_q.
    \]
    \item[(ii)] In particular, when \(\mathbf{B}_{0}\big|_{x_n=0} = \mathbf{0}\), one obtains
    \[
    \norm{\partial_x^k \partial_t^j\mathscr{B}(t) \partial_x\mathbf{B}_{0}}_p \lesssim t^{-\frac{n}{2}\left(\frac{1}{q}-\frac{1}{p}\right)-\frac{1}{2}(k+2j+1)} \norm{\mathbf{B}_{0}}_q.
    \]
    \item[(iii)]
    \[
    \norm{\partial_{x'}\partial_x^{k} \partial_t^j\mathscr{B}(t) \mathbf{B}_{0}}_p \lesssim t^{-\frac{n}{2}\left(\frac{1}{q}-\frac{1}{p}\right)-\frac{1}{2}(k+2j)} \norm{\partial_{x'}\mathbf{B}_{0}}_q.
    \]
  \end{enumerate}
\end{proposition}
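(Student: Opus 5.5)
The plan is to reduce everything to explicit heat kernels via reflection. With either the insulating or the perfectly conducting condition, system \eqref{1.8} decouples componentwise. Each scalar component satisfies the heat equation on \(\mathbb{R}_+^n\) with a homogeneous Dirichlet condition (\(B_j=0\)) or a homogeneous Neumann condition (\(\partial_{x_n}B_j=0\)). The insulating case gives Dirichlet for \(\mathbf{B}'\) and Neumann for \(B_n\); the perfectly conducting case gives the reverse. So for each component the solution is
\[
B_j(t,x)=\int_{\mathbb{R}_+^n}\bigl(G_t(x'-y',x_n-y_n)\mp G_t(x'-y',x_n+y_n)\bigr)B_{0,j}(y)\,\mathrm{d}y,
\]
where \(G_t\) is the Gauss kernel on \(\mathbb{R}^n\), the minus sign corresponds to Dirichlet and the plus sign to Neumann. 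Equivalently, \(B_j\) is the restriction to \(\mathbb{R}_+^n\) of the whole-space heat flow \(e^{t\Delta}\tilde B_{0,j}\), where \(\tilde B_{0,j}\) is the odd (Dirichlet) or even (Neumann) extension of \(B_{0,j}\) in \(x_n\). Since \(\norm{\tilde B_{0,j}}_{L^q(\mathbb{R}^n)}\le 2^{1/q}\norm{B_{0,j}}_{q}\), the statement reduces to whole-space estimates.

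For (i), I first use \(\partial_t^j e^{t\Delta}=\Delta^j e^{t\Delta}\), so \(\partial_x^k\partial_t^j\) acts as \(2j+k\) spatial derivatives. I then apply Young's inequality with \(\norm{\partial_x^{k+2j}G_t}_{r}\lesssim t^{-\frac n2(1-\frac1r)-\frac{k+2j}{2}}\), where \(1+\frac1p=\frac1r+\frac1q\). This gives the claimed rate for all \(t>0\) and all \(1\le q\le p\le\infty\), including the endpoints. For (iii), tangential derivatives commute with the reflection in \(x_n\) and with the semigroup, so \(\partial_{x'}\mathscr{B}(t)\mathbf{B}_0=\mathscr{B}(t)\partial_{x'}\mathbf{B}_0\); applying (i) with \(\partial_{x'}\mathbf{B}_0\) as the datum finishes it.

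The main step is (ii), where I must move a derivative from the data onto the kernel; this is the only place the vanishing trace \(\mathbf{B}_0|_{x_n=0}=\mathbf{0}\) matters. I write \(\mathscr{B}(t)\partial_{y_i}\mathbf{B}_0=\int K_t(x,y)\partial_{y_i}\mathbf{B}_0(y)\,\mathrm{d}y\) and integrate by parts in \(y\). For tangential \(i<n\) there is no boundary term. For \(i=n\) the boundary term is \(K_t(x,(y',0))\mathbf{B}_0(y',0)\), which vanishes by hypothesis, whatever the Dirichlet or Neumann kernel. After integrating by parts, the kernel is \(\partial_{y_i}K_t\), a sum of terms of the form \(\pm(\partial G_t)(x'-y',x_n\mp y_n)\). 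These are just a derivative of the Gauss kernel, with the sign flipped under reflection. Their \(L^r\) norms in \(y\), uniformly in \(x\) (and vice versa), are bounded by \(t^{-\frac n2(1-\frac1r)-\frac12}\). After applying \(\partial_x^k\partial_t^j\) as in (i), Young's inequality (or Schur's test for \(p\ne q\)) yields the extra factor \(t^{-1/2}\).

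The subtle points are two. First, the integration by parts must be justified for data merely in \(L^q\); I handle this by density of smooth functions vanishing at \(x_n=0\), interpreting \(\mathscr{B}(t)\partial_x\) as the extension of the operator. Second, the \(\partial_x^k\) derivatives in \(x_n\) act on both \(x_n-y_n\) and \(x_n+y_n\) terms, but each such term is still a Gauss-kernel derivative, so the same bounds hold.
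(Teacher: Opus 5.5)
Your proposal is correct and follows the paper's route. You reduce to explicit reflected Gauss kernels via even/odd extensions in \(x_n\), prove (i) by Young's inequality and the scaling of \(\partial_x^m G_t\), and prove (ii) by integrating by parts in \(y\), where the boundary term vanishes precisely because \(\mathbf{B}_0|_{x_n=0}=\mathbf{0}\). The only cosmetic difference is that the paper rewrites \(-\partial_{y_n}\) of the kernel as \(\partial_{x_n}\) of the opposite-parity kernel and then reuses (i), whereas you bound \(\partial_y K_t\) directly; both give the extra factor \(t^{-1/2}\).
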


\begin{proof}
We first treat the case of the perfectly conducting boundary condition
\(\left(\partial_{x_n}\mathbf{B}',B_n\right)\big|_{x_n=0} = \mathbf{0}\).

We split the magnetic field \(\mathbf{B}\) into its tangential and normal components and handle each component independently. The tangential component \(\mathbf{B}'\) obeys the heat equation with homogeneous Neumann boundary data, while the normal component \(B_n\) satisfies the heat equation with homogeneous Dirichlet boundary condition.
By carrying out even and odd extensions of \((\mathbf{B}_0)'\) and \((\mathbf{B}_0)_n\) respectively, we arrive at the following representation formula for \(\mathscr{B}(t)\mathbf{B}_0\):
\[
\left(\mathscr{B}(t) \mathbf{B}_0\right)' = \int_{\mathbb{R}^n_+} \left(G\left(t, \mathbf{x}'-\mathbf{y}', x_{n}-y_{n}\right) + G\left(t, \mathbf{x}'-\mathbf{y}', x_{n}+y_{n}\right)\right) \left(\mathbf{B}_0(\mathbf{y})\right)' \, \mathrm{d}\mathbf{y},
\]
\[
\left(\mathscr{B}(t) \mathbf{B}_0\right)_{n} = \int_{\mathbb{R}^n_+} \left(G\left(t, \mathbf{x}'-\mathbf{y}', x_{n}-y_{n}\right) - G\left(t, \mathbf{x}'-\mathbf{y}', x_{n}+y_{n}\right)\right) \left(\mathbf{B}_0(\mathbf{y})\right)_{n} \, \mathrm{d}\mathbf{y},
\]
where
\[
G(t, \mathbf{x}) \triangleq \frac{1}{(4\pi t)^{n/2}} \exp\left( -\frac{|\mathbf{x}|^2}{4t}\right)
\]
stands for the standard heat kernel on \(\mathbb{R}^n\). Estimate (i) then follows from Young’s convolution inequality combined with the \(L^r\)-integrability properties of the differentiated heat kernel \(\partial_x^k \partial_t^j G(t, \mathbf{x})\).

We next turn to estimate (ii). We focus on the term \(\norm{ \mathscr{B}(t)\partial_{x_n}\mathbf{B}_{0}}_p\); the remaining cases follow by analogous arguments. Integrating by parts and invoking the boundary condition \(\left(\mathbf{B}_{0}\right)'\big|_{x_n=0} = \mathbf{0}\), we obtain
\begin{equation}\label{3.3}
  \begin{aligned}
  &\left( \mathscr{B}(t)\partial_{x_n}\mathbf{B}_{0}\right)'\\
  ={}& \int_{\mathbb{R}^n_+} \left(G\left(t, \mathbf{x}'-\mathbf{y}', x_{n}-y_{n}\right) + G\left(t, \mathbf{x}'-\mathbf{y}', x_{n}+y_{n}\right)\right)  \partial_{y_n}\left(\mathbf{B}_{0}(\mathbf{y})\right)' \, \mathrm{d}\mathbf{y} \\
  ={}&-\int_{\mathbb{R}^n_+} \partial_{y_{n}} \left(G\left(t, \mathbf{x}'-\mathbf{y}', x_{n}-y_{n}\right) + G\left(t, \mathbf{x}'-\mathbf{y}', x_{n}+y_{n}\right)\right) \left(\mathbf{B}_{0}(\mathbf{y})\right)' \, \mathrm{d}\mathbf{y} \\
  ={}& \int_{\mathbb{R}^n_+}  \partial_{x_{n}}\left(G\left(t, \mathbf{x}'-\mathbf{y}', x_{n}-y_{n}\right) - G\left(t, \mathbf{x}'-\mathbf{y}', x_{n}+y_{n}\right)\right) \left(\mathbf{B}_{0}(\mathbf{y})\right)' \, \mathrm{d}\mathbf{y}.
  \end{aligned}
\end{equation}
Proceeding similarly via integration by parts, we deduce that
\begin{equation}\label{3.4}
  \begin{aligned}
  &\left(\mathscr{B}(t) \partial_{x_n}\mathbf{B}_{0}\right)_{n}\\
  ={}& \int_{\mathbb{R}^n_+} \left(G\left(t, \mathbf{x}'-\mathbf{y}', x_{n}-y_{n}\right) - G\left(t, \mathbf{x}'-\mathbf{y}', x_{n}+y_{n}\right)\right)\partial_{y_n}\left(\mathbf{B}_{0}(\mathbf{y})\right)_{n} \, \mathrm{d}\mathbf{y}\\
  ={}& \int_{\mathbb{R}^n_+} \partial_{y_{n}} \left(-G\left(t, \mathbf{x}'-\mathbf{y}', x_{n}-y_{n}\right) + G\left(t, \mathbf{x}'-\mathbf{y}', x_{n}+y_{n}\right)\right) \left(\mathbf{B}_{0}(\mathbf{y})\right)_{n} \, \mathrm{d}\mathbf{y} \\
  ={}& \int_{\mathbb{R}^n_+}  \partial_{x_{n}}\left(G\left(t, \mathbf{x}'-\mathbf{y}', x_{n}-y_{n}\right) + G\left(t, \mathbf{x}'-\mathbf{y}', x_{n}+y_{n}\right)\right)\left(\mathbf{B}_{0}(\mathbf{y})\right)_{n} \, \mathrm{d}\mathbf{y}.
  \end{aligned}
\end{equation}
Applying the same reasoning as in (i) to identities \eqref{3.3} and \eqref{3.4} yields estimate (ii).

The proof of (iii) follows along the same lines as that of (ii), and we therefore omit the details.

The case of the insulating boundary condition \(\left(\mathbf{B}', \partial_{x_n} B_n\right)\big|_{x_n=0} = \mathbf{0}\) can be handled by an identical argument, and we omit its proof for the sake of brevity.
\end{proof}

\subsection{\(H^1\) decay rates of global solutions}\label{{sub:3.2}}
We begin by establishing the \(L^2\) estimates stated in items (i), (ii) and (iv) of Theorem \ref{T1.3}.

For the convenience of the ensuing analysis, we introduce the following exponent parameters:
\[
\sigma_{0}\triangleq\frac{n}{4}+q(1),\quad \sigma_{1}\triangleq\frac{n}{4}-\varepsilon_1,\quad \sigma_{2}\triangleq\frac{n}{4}+q(1)-2\varepsilon_1,
\]
\[
\sigma_{3}\triangleq\frac{n}{4}+\frac{1}{2},\quad \sigma_{4}\triangleq\frac{n}{4}+\frac{1}{2}-2\varepsilon_1,\quad \sigma_{5}\triangleq\frac{n}{4}+1,
\]
where \(q(1)\) is the quantity defined in \eqref{3.1}, namely \(q(1)=\frac{3}{8}\) for \(n=2\) and \(q(1)=\frac{1}{2}\) for \(n \geq 3\), and \(\varepsilon_1>0\) is a sufficiently small constant.

We further define the following weighted supremum quantities:
\begin{align*}
  M_{\sigma}^{s}(t)\triangleq{}&\sup_{0 \leq \tau \leq t} (1+\tau)^{\sigma} \norm{\partial_{x}^{s} \mathbf{u}(\tau)}_{2},\\
B_{\sigma}^{s}(t)\triangleq{}&\sup_{0 \leq \tau \leq t} (1+\tau)^{\sigma} \norm{\partial_{x}^{s} \mathbf{B}(\tau)}_{2},\\
\mathscr{M}_{0}(t)\triangleq{}& M_{\frac{n}{4}}^{0}(t)+M_{\sigma_{0}}^{1}(t)+B_{\frac{n}{4}}^{0}(t)+B_{\sigma_{3}}^{1}(t).
\end{align*}

We shall prove that
\begin{equation}\label{3.5}
  \mathscr{M}_{0}(t) \lesssim \norm{(\mathbf{u}_0,\mathbf{B}_0)}_{1} +\norm{\mathbf{u}_0}_{H^{1} \times L^{2}}+ E_{0}^{s}(t)^{2} + D_{0}^{s}(t)^{2} + \mathscr{M}_{0}(t)^{2}
\end{equation}
for every \(t \geq 0\), under the condition that \(E_{0}^{s}(\infty) \leq 1\).
With this bound in hand, the desired \(H^1\) estimates in item (i) of Theorem \ref{T1.3} follow from Theorem \ref{T1.1} together with \eqref{3.5} by a standard argument, provided that \(\norm{(\mathbf{u}_0,\mathbf{B}_0)}_{H^s} + \norm{(\mathbf{u}_0,\mathbf{B}_0)}_{1} \leq \eta_{1}\) for some small constant \(\eta_{1}>0\).

In view of Theorem \ref{T1.1}, the short-time estimate \eqref{3.5} for \(t \leq 1\) is immediate. We may therefore assume \(t \geq 1\) throughout the rest of the proof.

We now decompose \(\mathbf{u}(t)\) and \(\mathbf{B}(t)\) via Duhamel’s principle, splitting the convolution integrals into short-time and long-time contributions:
\begin{align*}
\mathbf{u}(t) = {}& \mathscr{U}(t) \mathbf{u}_0 + \int_{t-1}^{t} \mathscr{U}(t-\tau) \operatorname{div} \begin{pmatrix} 0 \\ \mathscr{N}_1+\mathscr{N}_2 \end{pmatrix}(\tau) \mathrm{d}\tau \\
& + \int_{0}^{t-1} \mathscr{U}(t-\tau) \operatorname{div} \begin{pmatrix} 0 \\ \mathscr{N}_1+\mathscr{N}_2 \end{pmatrix}(\tau) \mathrm{d}\tau \\
\triangleq {}& I_{0}(t) + I_{1}(t) + I_{2}(t),\\
\mathbf{B}(t) = {}& \mathscr{B}(t) \mathbf{B}_0 + \int_{t-1}^{t} \mathscr{B}(t-\tau) \operatorname{div} \mathscr{N}_3(\tau) \mathrm{d}\tau \\
& + \int_{0}^{t-1} \mathscr{B}(t-\tau) \operatorname{div} \mathscr{N}_3(\tau) \mathrm{d}\tau \\
\triangleq{} & II_{0}(t) + II_{1}(t) + II_{2}(t).
\end{align*}

Applying item (i) of Proposition \ref{P3.1} and item (i) of Proposition \ref{P3.4} with \(\ell = 0, 1\), we arrive at
\begin{align*}
  \norm{\partial_{x}^{\ell} I_{0}(t)}_{2} \lesssim{}& t^{-\frac{n}{4}-q(\ell)} \norm{\mathbf{u}_0}_{1} + e^{-c t}\norm{\mathbf{u}_0}_{H^{1} \times L^{2}},\\
\norm{\partial_{x}^{\ell} II_{0}(t)}_{2} \lesssim{}& t^{-\frac{n}{4}-\frac{\ell}{2}}  \norm{\mathbf{B}_0}_{1},
\end{align*}
with \(q(\ell)\) given by \eqref{3.1}.

To bound the terms \(I_1(t)\) and \(I_2(t)\), we first collect several estimates for the nonlinear matrices \(\mathscr{N}_1\) and \(\mathscr{N}_2\), which are summarized in the proposition below.

\begin{proposition}\label{P3.5}
Let the regularity index satisfy \(s \geq s_0 + 1\), and suppose that \(E_0^s(t) \leq 1\) and \(a(t) \geq -\frac{1}{2}\) for every \(t \geq 0\). Then the inequalities below hold for all \(t \geq 0\):
\begin{enumerate}
\item[(i)]
\[
\norm{\partial_{x} \mathscr{N}_1(t)}_{2} \lesssim(1+t)^{-\sigma_{0}} \left( E_{0}^{s}(t)^{2}+  M_{\frac{n}{4}}^{0}(t)^{2} + M_{\sigma_{0}}^{1}(t)^{2} \right),
\]
\item[(ii)]
\begin{align*}
\norm{\partial_{x} \mathscr{N}_1(t)}_{1} \lesssim{}& (1+t)^{-\frac{n}{4}-\frac{\sigma_{0}}{2}} M_{\frac{n}{4}}^{0}(t) M_{\sigma_{0}}^{1}(t)^{\frac{1}{2}} \norm{\partial_{x}^{3} \mathbf{u}(t)}_{2}^{\frac{1}{2}} \\
& + (1+t)^{-\frac{n}{2}-q(1)} \left( M_{\frac{n}{4}}^{0}(t)^{2} + M_{\sigma_{0}}^{1}(t)^{2} \right),
\end{align*}
\item[(iii)]
\[
\norm{\mathscr{N}_1(t)}_{2} \lesssim(1+t)^{-\sigma_{0}} \left(E_{0}^{s}(t)^{2} + M_{\sigma_{0}}^{1}(t)^{2}\right),
\]
\item[(iv)]
\[
\norm{\mathscr{N}_1(t)}_{1} \lesssim(1+t)^{-\frac{n}{2}} \left( M_{\frac{n}{4}}^{0}(t)^{2} + M_{\sigma_{0}}^{1}(t)^{2} \right),
\]
\item[(v)]
\[
\norm{\partial_{x} \mathscr{N}_2(t)}_{2} \lesssim(1+t)^{-\sigma_{3}} \left(  E_{0}^{s}(t)^{2} +B_{\sigma_{3}}^{1}(t)^{2}\right),
\]
\item[(vi)]
\[
\norm{\partial_{x} \mathscr{N}_2(t)}_{1} \lesssim (1+t)^{-\frac{n}{4}-\sigma_{3}} \left(B_{\frac{n}{4}}^{0}(t)^{2}+B_{\sigma_{3}}^{1}(t)^{2}\right),
\]
\item[(vii)]
\[
\norm{\mathscr{N}_2(t)}_{2} \lesssim(1+t)^{-\sigma_{3}} \left( E_{0}^{s}(t)^{2}+B_{\sigma_{3}}^{1}(t)^{2} \right),
\]
\item[(viii)]
\[
\norm{\mathscr{N}_2(t)}_{1} \lesssim(1+t)^{-\frac{n}{2}}  B_{\frac{n}{4}}^{0}(t)^{2}.
\]
\end{enumerate}
\end{proposition}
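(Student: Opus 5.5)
We estimate the entries of \(\mathscr{N}_1\) and \(\mathscr{N}_2\) term by term, using only Hölder's inequality, Gagliardo--Nirenberg/Sobolev inequalities on the half space (Lemmas \ref{L4.1}--\ref{L4.3}), and the definitions of \(M_\sigma^s\), \(B_\sigma^s\), \(E_0^s\). Since \(a\ge -\frac12\) and \(E_0^s\le 1\), the coefficient functions \(\frac{1}{a+1}\), \(\frac{a}{a+1}\) and \(\int_0^1(1-\theta)P''(a\theta+1)\,\mathrm d\theta\) are smooth functions of \(a\). They and their derivatives up to order \(s-s_0\) are bounded in \(L^\infty\) by a constant, because \(s\ge s_0+1\) gives \(\norm{\partial_x^k a}_\infty\lesssim \norm{a}_{H^s}\) for \(k\le 1\). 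Hence every entry of \(\mathscr{N}_1\) is, up to bounded factors, a quadratic expression of the form
\[
a^2,\quad a\,\partial_x\mathbf m,\quad \mathbf m\,\partial_x a,\quad \mathbf m\otimes\mathbf m ,
\]
and \(\partial_x\mathscr N_1\) additionally contains \(a\,\partial_x^2\mathbf m\), \(\partial_x a\,\partial_x\mathbf m\), \(\mathbf m\,\partial_x^2 a\) and \(a\,\partial_x a\). Every entry of \(\mathscr N_2\) is \(B_jB_k\), so \(\partial_x\mathscr N_2\sim \mathbf B\,\partial_x\mathbf B\).

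\textbf{\(L^1\) bounds (ii), (iv), (vi), (viii).} These follow from Cauchy--Schwarz. For (iv), \(\norm{fg}_1\le \norm{f}_2\norm{g}_2\) applied to \(a^2\), \(\mathbf m\otimes\mathbf m\) gives \((1+t)^{-n/2}(M^0_{n/4})^2\). The terms \(a\partial_x\mathbf m\) and \(\mathbf m\partial_x a\) give \((1+t)^{-n/4-\sigma_0}M^0_{n/4}M^1_{\sigma_0}\), which is even better. Likewise (viii) is \(\norm{\mathbf B}_2^2\), and (vi) is \(\norm{\mathbf B}_2\norm{\partial_x\mathbf B}_2\le (1+t)^{-n/4-\sigma_3}B^0_{n/4}B^1_{\sigma_3}\).

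For (ii), the terms \(\partial_x a\partial_x\mathbf m\) and \(a\partial_x a\) are handled by Cauchy--Schwarz, giving \((1+t)^{-2\sigma_0}\) and \((1+t)^{-n/4-\sigma_0}\) respectively. Both are at least \(-\frac n2-q(1)\), since \(2\sigma_0=\frac n2+2q(1)\). The second-order terms \(a\partial_x^2\mathbf m\) and \(\mathbf m\partial_x^2a\) are bounded by \(\norm{\mathbf u}_2\norm{\partial_x^2\mathbf u}_2\). We then interpolate \(\norm{\partial_x^2\mathbf u}_2\le\norm{\partial_x\mathbf u}_2^{1/2}\norm{\partial_x^3\mathbf u}_2^{1/2}\), which yields exactly the first term of (ii).

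\textbf{\(L^2\) bounds (i), (iii), (v), (vii).} Here one factor goes into \(L^\infty\) or \(L^{r}\) and is controlled by \(E_0^s\). The other factor carries a derivative and hence the decay \((1+t)^{-\sigma_0}\) (or \(-\sigma_3\)).

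For (iii), we write \(\norm{a\,\partial_x\mathbf m}_2\le\norm{a}_\infty\norm{\partial_x\mathbf m}_2\le E_0^s M^1_{\sigma_0}(1+t)^{-\sigma_0}\). For the undifferentiated products \(a^2\) and \(\mathbf m\otimes\mathbf m\), we use the Gagliardo--Nirenberg inequality \(\norm{f}_4^2\lesssim\norm{f}_2^{2-n/2}\norm{\partial_xf}_2^{n/2}\) when \(n\le3\). In general we use \(\norm{f g}_2\le\norm{f}_{r_1}\norm{g}_{r_2}\), with the \(L^{r}\) factor controlled through Sobolev embedding by \(\norm{\partial_x f}_{H^{s-1}}\), which is bounded by \(E_0^s\) on one factor and by \(\norm{\partial_x \mathbf u}_2\) on the other. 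The goal in all cases is \(\lesssim E_0^s\,M^1_{\sigma_0}(1+t)^{-\sigma_0}\). The products \(\mathbf B\mathbf B\) in (vii) are handled identically with \(\sigma_3\).

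For (i) and (v), every term of \(\partial_x\mathscr N\) contains at least one derivative. Products such as \(a\partial_x^2\mathbf m\), where the derivatives are not low order, are treated by the Moser-type estimate \(\norm{\partial_x(fg)}_2\lesssim \norm{f}_\infty\norm{\partial_x g}_2+\norm{\partial_x f}_{r}\norm{g}_{r'}\). The aim is always to arrange that a factor \(\partial_x\mathbf u\) (or \(\partial_x\mathbf B\)) appears in \(L^2\) while everything else is bounded by \(E_0^s\).

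\textbf{Main obstacle.} The expected difficulty is the top-order term in (i). Consider \(a\,\partial_x^2\mathbf m\) or \(\mathbf m\,\partial_x^2a\): to extract the factor \((1+t)^{-\sigma_0}\) we must put the \(L^\infty\) norm on the second-derivative factor. For example, \(\norm{\mathbf m\,\partial_x^2a}_2\le\norm{\partial_x^2a}_{\infty}\,\norm{\mathbf m}_2\) decays only like \(n/4\), whereas \(\norm{\partial_x^2 a}_{2}\norm{\mathbf m}_\infty\) has \(\norm{\mathbf m}_\infty\lesssim\norm{\partial_x\mathbf m}_{H^{s_0-1}}\), which is not controlled by \(M^1_{\sigma_0}\) alone.

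I would resolve this by Gagliardo--Nirenberg interpolation of \(\norm{\mathbf m}_\infty\) (or \(\norm{\mathbf m}_{r}\)) between \(\norm{\partial_x\mathbf m}_2\) and \(\norm{\partial_x^{s_0}\mathbf m}_2\le E_0^s\). If a power loss remains, I would also use \(M^0_{n/4}\) on the low-frequency part. This is why \(M^0_{n/4}\) appears in (i) but not in (iii). The requirement \(s\ge s_0+1\) ensures that \(\partial_x^2\mathbf u\) lies in \(H^{s_0-1}\) and that \(\norm{\partial_x^2 \mathbf u}_{r}\) is bounded by \(E_0^s\).
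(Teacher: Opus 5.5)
Your overall route (term-by-term Hölder plus Gagliardo--Nirenberg interpolation, as in Kagei--Kobayashi, which is all the paper invokes via Lemma \ref{L4.1}) is the right one. Items (ii)--(viii) go through essentially as you describe; the terms you omit, such as $\mathbf m\,\partial_x\mathbf m$ and the cubic terms coming from differentiating the coefficients, are harmless. The gap is in how you resolve the obstacle in (i) when $n=2$.

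You treat the second-order factor in $a\,\partial_x^2\mathbf m$ and $\mathbf m\,\partial_x^2 a$ as merely bounded by $E_0^s$. Then all the decay must come from the undifferentiated factor, placed in some $L^r$ with $r>2$. In two dimensions $\norm{\mathbf m}_r$ cannot be interpolated between $\norm{\partial_x\mathbf m}_2$ and $\norm{\partial_x^{s_0}\mathbf m}_2$: by scaling, any such bound must carry a positive power $\theta_0$ of $\norm{\mathbf m}_2$. That norm decays at best like $(1+t)^{-n/4}=(1+t)^{-1/2}$, which is slower than $\sigma_0=7/8$. The best rate this strategy yields is therefore $\frac12\theta_0+\frac78(1-\theta_0)<\sigma_0$. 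Your fallback of using $M^0_{n/4}$ is exactly the source of this loss and cannot cure it.

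The missing idea is that the second-order factor must also supply decay. Interpolate it between $\norm{\partial_x\mathbf u}_2$, which has rate $\sigma_0$ via $M^1_{\sigma_0}$, and $\norm{\partial_x^3\mathbf u}_2\le E_0^s$; this is where $s\ge s_0+1$ is really used. For example, for $n=2$,
\[
\norm{\mathbf m\,\partial_x^2 a}_2\le\norm{\mathbf m}_4\norm{\partial_x^2 a}_4\lesssim\norm{\mathbf m}_2^{1/2}\norm{\partial_x\mathbf m}_2^{1/2}\norm{\partial_x a}_2^{1/4}\norm{\partial_x^3 a}_2^{3/4}.
\]
This decays like $(1+t)^{-\frac14-\frac{7}{16}-\frac{7}{32}}=(1+t)^{-29/32}$, which beats $(1+t)^{-7/8}$. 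Young's inequality then turns the product $(M^0_{n/4})^{1/2}(M^1_{\sigma_0})^{3/4}(E_0^s)^{3/4}$ into the quadratic right-hand side of (i). The term $a\,\partial_x^2\mathbf m$ is handled the same way. In general dimension the split $\norm{a}_{r_1}\norm{\partial_x^2\mathbf m}_{r_2}$, with both factors interpolated, works because $s-1\ge s_0>n/2$.

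For $n\ge 3$ your parenthetical option does work, but only at the Sobolev endpoint: $\norm{\mathbf m}_{2n/(n-2)}\lesssim\norm{\partial_x\mathbf m}_2$ together with $\norm{\partial_x^2 a}_n\lesssim\norm{a}_{H^s}$ gives the rate $\sigma_0$ with no loss. The $L^\infty$ version you mention first still loses a power in that case, unless the second-order factor is interpolated as well.
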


\begin{proof}
Following the argument in \cite{Kagei-Kobayashi2005}, all these estimates follow from an application of Lemma \ref{L4.1} presented in the Appendix \ref{sec:4}; we therefore omit the full details of the derivation.
\end{proof}

We proceed to estimate the Duhamel terms \(I_1(t)\) and \(I_2(t)\). Combining items (i) and (ii) of Proposition \ref{P3.3} with items (i) and (v) of Proposition \ref{P3.5}, we deduce that for \(\ell = 0, 1\),
\begin{align*}
\norm{\partial_{x}^{\ell} I_{1}(t)}_{2}  \lesssim{}&\int_{t-1}^{t} (t-\tau)^{-\frac{\ell}{2}} \left(\norm{\operatorname{div} \mathscr{N}_1(\tau)}_{2} +\norm{\operatorname{div} \mathscr{N}_2(\tau)}_{2}\right)\mathrm{d}\tau \\
\lesssim{}&  \left( E_{0}^{s}(t)^{2}+ M_{\frac{n}{4}}^{0}(t)^{2} + M_{\sigma_{0}}^{1}(t)^{2} \right) \int_{t-1}^{t} (t-\tau)^{-\frac{\ell}{2}} (1+\tau)^{-\sigma_{0}} \mathrm{d}\tau \\
&+ \left( E_{0}^{s}(t)^{2}+ B_{\sigma_{3}}^{1}(t)^{2} \right)\int_{t-1}^{t} (t-\tau)^{-\frac{\ell}{2}} (1+\tau)^{-\sigma_{3}} \mathrm{d}\tau \\
 \lesssim{}&(1+t)^{-\sigma_{0}} \left( E_{0}^{s}(t)^{2} + \mathscr{M}_{0}(t)^{2} \right).
\end{align*}

Turning to the long-time contribution \(I_2(t)\), we invoke Proposition \ref{P3.2}, items (iii), (iv), (vii) and (viii) of Proposition \ref{P3.5}, and Lemma \ref{L4.4} to obtain
\begin{align*}
\norm{I_{2}(t)}_{2} \lesssim{}&  \int_{0}^{t-1} (t-\tau)^{-\frac{n}{4}-\frac{1}{4}} \left(\norm{\mathscr{N}_1(\tau)}_{1}+\norm{\mathscr{N}_2(\tau)}_{1}\right) \mathrm{d}\tau\\
& + \int_{0}^{t-1} e^{-c(t-\tau)} \left(\norm{\mathscr{N}_1(\tau)}_{2}+\norm{\mathscr{N}_2(\tau)}_{2}\right) \mathrm{d}\tau  \\
\lesssim{}& \left( M_{\frac{n}{4}}^{0}(t)^{2} + M_{\sigma_{0}}^{1}(t)^{2} + B_{\frac{n}{4}}^{0}(t)^{2} \right) \int_{0}^{t-1} (t-\tau)^{-\frac{n}{4}-\frac{1}{4}} (1+\tau)^{-\frac{n}{2}} \mathrm{d}\tau \\
& +   \left(E_{0}^{s}(t)^{2} + M_{\sigma_{0}}^{1}(t)^{2}\right) \int_{0}^{t-1} e^{-c(t-\tau)} (1+\tau)^{-\sigma_{0}} \mathrm{d}\tau \\
& +   \left(E_{0}^{s}(t)^{2}+ B_{\sigma_{3}}^{1}(t)^{2}  \right) \int_{0}^{t-1} e^{-c(t-\tau)} (1+\tau)^{-\sigma_{3}} \mathrm{d}\tau \\
\lesssim{}& (1+t)^{-\frac{n}{4}-\frac{1}{4}} L(t)\left( E_{0}^{s}(t)^{2} + \mathscr{M}_{0}(t)^{2} \right),
\end{align*}
where \(L(t) = \log(t+1)\) for \(n=2\) and \(L(t) = 1\) for \(n \geq 3\).

For the first-order spatial derivative \(\partial_x I_2(t)\), an application of item (i) of Proposition \ref{P3.1}, items (i), (ii), (v) and (vi) of Proposition \ref{P3.5}, and Lemma \ref{L4.4} yields
\begin{align*}
\norm{\partial_{x} I_{2}(t)}_{2} \lesssim{}&  \int_{0}^{t-1} (t-\tau)^{-\frac{n}{4}-q(1)} \left(\norm{\operatorname{div} \mathscr{N}_1(\tau)}_{1}+\norm{\operatorname{div} \mathscr{N}_2(\tau)}_{1}\right) \mathrm{d}\tau \\
& +  \int_{0}^{t-1} e^{-c(t-\tau)} \left(\norm{\operatorname{div} \mathscr{N}_1(\tau)}_{2}+\norm{\operatorname{div} \mathscr{N}_2(\tau)}_{2}\right) \mathrm{d}\tau \\
\lesssim{}&  M_{\frac{n}{4}}^{0}(t) M_{\sigma_{0}}^{1}(t)^{\frac{1}{2}} \\
& \cdot \left( \int_{0}^{t-1} (t-\tau)^{-\frac{4}{3}\left(\frac{n}{4}+q(1)\right)} (1+\tau)^{-\frac{4}{3}\left(\frac{n}{4}+\frac{\sigma_{0}}{2}\right)} \mathrm{d}\tau \right)^{\frac{3}{4}} \left( \int_{0}^{t-1} \norm{\partial_{x}^{3} \mathbf{u}}_{2}^{2} \mathrm{d}\tau \right)^{\frac{1}{4}} \\
& +  \left( M_{\frac{n}{4}}^{0}(t)^{2} + M_{\sigma_{0}}^{1}(t)^{2} \right) \int_{0}^{t-1} (t-\tau)^{-\frac{n}{4}-q(1)} (1+\tau)^{-\frac{n}{2}-q(1)}\mathrm{d}\tau \\
& +  \left(B_{\frac{n}{4}}^{0}(t)^{2}+B_{\sigma_{3}}^{1}(t)^{2}\right) \int_{0}^{t-1} (t-\tau)^{-\frac{n}{4}-q(1)} (1+\tau)^{-\frac{n}{4}-\sigma_{3}} \mathrm{d}\tau \\
& +  \left(E_{0}^{s}(t)^{2} + M_{\frac{n}{4}}^{0}(t)^{2} + M_{\sigma_{0}}^{1}(t)^{2}\right) \int_{0}^{t-1} e^{-c(t-\tau)} (1+\tau)^{-\sigma_{0}} \mathrm{d}\tau\\
& +  \left(E_{0}^{s}(t)^{2}+ B_{\sigma_{3}}^{1}(t)^{2} \right) \int_{0}^{t-1} e^{-c(t-\tau)} (1+\tau)^{-\sigma_{3}} \mathrm{d}\tau\\
\lesssim{}&(1+t)^{-\frac{n}{4}-q(1)} \left( E_{0}^{s}(t)^{2} + D_{0}^{s}(t)^{2} + \mathscr{M}_{0}(t)^{2} \right).
\end{align*}

To bound the terms \(II_1(t)\) and \(II_2(t)\), we next collect the corresponding estimates for the nonlinear matrix \(\mathscr{N}_3\), which are summarized in the proposition below.

\begin{proposition}\label{P3.6}
Let the regularity index satisfy \(s \geq s_0 + 1\), and suppose that \(E_0^s(t) \leq 1\) and \(a(t) \geq -\frac{1}{2}\) for every \(t \geq 0\). Then the inequalities below hold for all \(t \geq 0\):
\begin{enumerate}
\item[(i)]
\[
\norm{\partial_{x} \mathscr{N}_3(t)}_{2} \lesssim(1+t)^{-\sigma_{0}} \left( E_{0}^{s}(t)^{2}+M_{\sigma_{0}}^{1}(t)^{2} + B_{\sigma_{3}}^{1}(t)^{2} \right).
\]
\item[(ii)] In particular, for \(n=2\),
\[
\norm{\partial_{x} \mathscr{N}_3(t)}_{2} \lesssim (1+t)^{-\frac{9}{8}} \left(E_{0}^{s}(t)^{2}+ \mathscr{M}_{0}(t)^{2}\right).
\]
\item[(iii)]
\[
\norm{\partial_{x} \mathscr{N}_3(t)}_{1} \lesssim (1+t)^{-\frac{n}{4}-{\sigma_{0}}}  \mathscr{M}_{0}(t)^{2}.
\]
\item[(iv)]
\[
\norm{\mathscr{N}_3(t)}_{1} \lesssim(1+t)^{-\frac{n}{2}} \left( M_{\frac{n}{4}}^{0}(t)^{2} + B_{\frac{n}{4}}^{0}(t)^{2} \right).
\]
\end{enumerate}
\end{proposition}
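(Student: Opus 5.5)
We bound the entries of \(\mathscr{N}_3=\frac{1}{a+1}(m_jB_k-B_jm_k)\) by expanding \(\partial_x\) through the product rule and combining Hölder's inequality, the Gagliardo--Nirenberg--Sobolev inequalities of Lemma \ref{L4.1}, and the hypotheses \(E_0^s(t)\le 1\), \(a\ge -\frac12\). The assumption \(a\ge-\frac12\) together with \(s\ge s_0+1\) makes \(\frac{1}{a+1}\) bounded. It also makes \(\partial_x\frac{1}{a+1}=-\frac{\partial_x a}{(a+1)^2}\) bounded in \(L^\infty\cap L^2\) by \(E_0^s(t)\), via Sobolev embedding. Hence every term is essentially of the form \(\mathbf{m}\mathbf{B}\) or \(\partial_x(\mathbf{m}\mathbf{B})\), possibly multiplied by a factor \(\partial_x a\). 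For (iv) we simply take \(\norm{\mathscr{N}_3}_1\lesssim\norm{\mathbf{m}}_2\norm{\mathbf{B}}_2\lesssim (1+t)^{-\frac n2}M^0_{\frac n4}B^0_{\frac n4}\) and then apply Young's inequality.

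For (iii) we write \(\partial_x\mathscr{N}_3\sim \partial_x\mathbf{m}\,\mathbf{B}+\mathbf{m}\,\partial_x\mathbf{B}+\partial_x a\,\mathbf{m}\mathbf{B}\). Each term is placed in \(L^1\) by \(L^2\times L^2\):
\[
\norm{\partial_x\mathbf{m}}_2\norm{\mathbf{B}}_2\lesssim (1+t)^{-\sigma_0-\frac n4}M^1_{\sigma_0}B^0_{\frac n4},\qquad \norm{\mathbf{m}}_2\norm{\partial_x\mathbf{B}}_2\lesssim(1+t)^{-\frac n4-\sigma_3}M^0_{\frac n4}B^1_{\sigma_3}.
\]
Since \(\sigma_3\ge\sigma_0\), both are bounded by \((1+t)^{-\frac n4-\sigma_0}\mathscr{M}_0^2\). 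The cubic term is handled in the same way using \(\norm{\partial_x a}_\infty\lesssim 1\). One point needs care: \(\partial_x\mathbf{m}=(a+1)\partial_x\mathbf{v}+\mathbf{v}\partial_x a\), and \(M^1_{\sigma_0}\) controls \(\partial_x\mathbf{u}=(\partial_x a,\partial_x\mathbf{m})\) directly, so we work throughout with \(\mathbf{m}\) rather than \(\mathbf{v}\).

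For (i) we need \(L^2\). The terms are split so that the factor carrying the derivative sits in \(L^2\) and the other factor in \(L^\infty\). The main obstacle is which factor pays the \(L^\infty\) norm. Take \(\norm{\partial_x\mathbf{m}}_2\norm{\mathbf{B}}_\infty\) with \(\norm{\mathbf{B}}_\infty\lesssim E_0^s\), which gives \((1+t)^{-\sigma_0}E_0^sM^1_{\sigma_0}\). Similarly, \(\norm{\mathbf{m}}_\infty\norm{\partial_x\mathbf{B}}_2\lesssim (1+t)^{-\sigma_3}E_0^sB^1_{\sigma_3}\), and \((1+t)^{-\sigma_3}\le(1+t)^{-\sigma_0}\). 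The term with \(\partial_xa\) is bounded by \(\norm{\partial_x a}_2\norm{\mathbf{m}}_\infty\norm{\mathbf{B}}_\infty\), again with \(M^1_{\sigma_0}\) and \(E_0^s\). Young's inequality then yields (i).

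For (ii), with \(n=2\) we have \(\sigma_0=\frac98\) and \(\sigma_3=1\), so the term \(\mathbf{m}\partial_x\mathbf{B}\) decays at least like \((1+t)^{-\frac98}\). We therefore put the \(L^\infty\) norm on the weaker factor through the Gagliardo--Nirenberg inequality (Lemma \ref{L4.1}): \(\norm{\mathbf{m}}_\infty\lesssim\norm{\mathbf{m}}_2^{1/2}\norm{\partial_x^2\mathbf{m}}_2^{1/2}\), or alternatively \(\norm{\mathbf{m}}_4\lesssim\norm{\mathbf{m}}_2^{1/2}\norm{\partial_x\mathbf{m}}_2^{1/2}\), paired with \(\norm{\partial_x\mathbf{B}}_4\lesssim\norm{\partial_x\mathbf{B}}_2^{1/2}\norm{\partial_x^2\mathbf{B}}_2^{1/2}\). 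Using \(L^4\times L^4\), this gives a rate of \(\frac14(1+\frac98)+\frac12\cdot1=\frac{25}{16}\ge\frac98\), with \(\partial_x^2\mathbf{B}\) bounded by \(E_0^s\). The other terms already decay like \((1+t)^{-\sigma_0}=(1+t)^{-\frac98}\). I expect this bookkeeping of decay exponents to be the only delicate step.
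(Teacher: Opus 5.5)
Your overall route (product rule on $\partial_x\mathscr N_3$, $a\ge-\frac12$ to control $\frac{1}{a+1}$ and $\frac{\partial_x a}{(a+1)^2}$, then Hölder and Lemma~\ref{L4.1}) is the one the paper intends; the paper itself only cites Kagei--Kobayashi and Lemma~\ref{L4.1}. Your arguments for (i) and (iv) are correct. The decay bookkeeping fails in (ii), which is the main gap.

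For $n=2$ one has $\sigma_0=\frac n4+q(1)=\frac12+\frac38=\frac78$, not $\frac98$. So (ii) is a genuine improvement on (i), and your claim that the remaining terms ``already decay like $(1+t)^{-9/8}$'' is false. The bounds you used in (i), $\norm{\partial_x\mathbf m}_2\norm{\mathbf B}_\infty\lesssim(1+t)^{-\sigma_0}E_0^sM^1_{\sigma_0}$ and $\norm{\partial_x a}_2\norm{\mathbf m}_\infty\norm{\mathbf B}_\infty$, give only $(1+t)^{-7/8}$, and you supply nothing to improve them. The missing step is to extract decay from the undifferentiated factor $\mathbf B$ as well. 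By Lemma~\ref{L4.1} with $n=2$,
\[
\norm{\mathbf B}_\infty\lesssim\norm{\mathbf B}_2^{\frac12}\norm{\partial_x^2\mathbf B}_2^{\frac12}\lesssim(1+t)^{-\frac14}B^0_{\frac n4}(t)^{\frac12}E_0^s(t)^{\frac12},
\]
which gives exactly $\frac78+\frac14=\frac98$ for both terms. This is consistent with $\mathscr M_0$, which contains $B^0_{\frac n4}$, appearing on the right of (ii) rather than only $M^1_{\sigma_0}$ and $B^1_{\sigma_3}$.

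Your treatment of $\mathbf m\,\partial_x\mathbf B$ does survive with the correct $\sigma_0$, since $\norm{\mathbf m}_4\norm{\partial_x\mathbf B}_4\lesssim(1+t)^{-\frac14-\frac7{16}-\frac12}=(1+t)^{-\frac{19}{16}}$. Note, though, that your displayed arithmetic is also wrong: $\frac14\bigl(1+\frac98\bigr)+\frac12=\frac{33}{32}<\frac98$, not $\frac{25}{16}$. So as written that step does not close even on your own numbers.

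There is a smaller slip in (iii). Estimating the cubic term as $\norm{\partial_x a}_\infty\norm{\mathbf m}_2\norm{\mathbf B}_2$ gives only $(1+t)^{-\frac n2}$. That falls short by $(1+t)^{-q(1)}$ of the required $(1+t)^{-\frac n4-\sigma_0}=(1+t)^{-\frac n2-q(1)}$. Put $\partial_x a$ in $L^2$ instead:
\[
\norm{\partial_x a}_2\norm{\mathbf m}_2\norm{\mathbf B}_\infty\lesssim(1+t)^{-\sigma_0-\frac n4}M^1_{\sigma_0}M^0_{\frac n4},
\]
using $\norm{\mathbf B}_\infty\lesssim E_0^s\le 1$. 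This is bounded by $(1+t)^{-\frac n4-\sigma_0}\mathscr M_0^2$ as required.
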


\begin{proof}
Following the argument in \cite{Kagei-Kobayashi2005}, these estimates follow from an application of Lemma \ref{L4.1} presented in the Appendix \ref{sec:4}; we therefore omit the full details of the proof.
\end{proof}

We proceed to estimate the magnetic Duhamel terms \(II_1(t)\) and \(II_2(t)\). Combining item (i) of Proposition \ref{P3.4} with item (i) of Proposition \ref{P3.6}, we deduce that for \(\ell = 0, 1\),
\begin{align*}
\norm{\partial_{x}^{\ell} II_{1}(t)}_{2}  \lesssim{}&\int_{t-1}^{t} (t-\tau)^{-\frac{\ell}{2}} \norm{\operatorname{div} \mathscr{N}_3(\tau)}_{2} \mathrm{d}\tau \\
\lesssim{}&  \left(E_{0}^{s}(t)^{2}+ M_{\sigma_{0}}^{1}(t)^{2} + B_{\sigma_{3}}^{1}(t)^{2}\right) \int_{t-1}^{t} (t-\tau)^{-\frac{\ell}{2}} (1+\tau)^{-\sigma_{0}} \mathrm{d}\tau \\
 \lesssim{}&(1+t)^{-\sigma_{0}} \left( E_{0}^{s}(t)^{2} + \mathscr{M}_{0}(t)^{2} \right).
\end{align*}

In particular, for the two-dimensional case \(n = 2\), we invoke item (ii) of Proposition \ref{P3.6} together with item (i) of Proposition \ref{P3.4} to arrive at
\begin{align*}
\norm{\partial_{x}^{\ell} II_{1}(t)}_{2}  \lesssim{}&\int_{t-1}^{t} (t-\tau)^{-\frac{\ell}{2}} \norm{\operatorname{div} \mathscr{N}_3(\tau)}_{2} \mathrm{d}\tau \\
\lesssim{}&\left(E_{0}^{s}(t)^{2}+ \mathscr{M}_{0}(t)^{2} \right)\int_{t-1}^{t} (t-\tau)^{-\frac{1}{2}} (1+\tau)^{-\frac{9}{8}} \mathrm{d}\tau \\
 \lesssim{}&(1+t)^{-\frac{9}{8}} \left( E_{0}^{s}(t)^{2} + \mathscr{M}_{0}(t)^{2} \right).
\end{align*}

Turning to the long-time contribution \(II_2(t)\), we exploit the boundary condition \(\mathscr{N}_3\big|_{x_n=0} = \mathbf{0}\) and combine item (ii) of Proposition \ref{P3.4}, item (iv) of Proposition \ref{P3.6} and Lemma \ref{L4.4} to deduce that
\begin{align*}
\norm{II_{2}(t)}_{2} \lesssim{}&  \int_{0}^{t-1} (t-\tau)^{-\frac{n}{4}-\frac{1}{2}} \norm{\mathscr{N}_3(\tau)}_{1} \mathrm{d}\tau\\
\lesssim{}& \left( M_{\frac{n}{4}}^{0}(t)^{2} + B_{\frac{n}{4}}^{0}(t)^{2} \right) \int_{0}^{t-1} (t-\tau)^{-\frac{n}{4}-\frac{1}{2}} (1+\tau)^{-\frac{n}{2}}  \mathrm{d}\tau \\
\lesssim{}& (1+t)^{-\frac{n}{4}-\frac{1}{2}} L(t)\mathscr{M}_{0}(t)^{2}.
\end{align*}

For the first-order spatial derivative \(\partial_x II_2(t)\), an application of item (i) of Proposition \ref{P3.4}, item (iii) of Proposition \ref{P3.6} and Lemma \ref{L4.4} yields
\begin{align*}
\norm{\partial_{x} II_{2}(t)}_{2} \lesssim{}&  \int_{0}^{t-1} (t-\tau)^{-\frac{n}{4}-\frac{1}{2}} \norm{\operatorname{div} \mathscr{N}_3(\tau)}_{1}\mathrm{d}\tau \\
\lesssim{}&  \mathscr{M}_{0}(t)^{2}   \int_{0}^{t-1} (t-\tau)^{-\frac{n}{4}-\frac{1}{2}} (1+\tau)^{-\frac{n}{4}-{\sigma_{0}}} \mathrm{d}\tau \\
\lesssim{}&(1+t)^{-\frac{n}{4}-\frac{1}{2}} \mathscr{M}_{0}(t)^{2},
\end{align*}
which completes the proof of estimate \eqref{3.5}.

As noted earlier, the desired \(H^1\) estimates in item (i) of Theorem \ref{T1.3} and the bound \(\mathscr{M}_0(t) \lesssim \eta_1\) follow directly from Theorem \ref{T1.1} and inequality \eqref{3.5}, provided that \(\norm{(\mathbf{u}_0,\mathbf{B}_0)}_{H^s} + \norm{(\mathbf{u}_0,\mathbf{B}_0)}_1 \leq \eta_1\) for some sufficiently small constant \(\eta_1 > 0\).

With the foregoing decay estimates in hand, we further obtain
\begin{align*}
  \norm{\mathbf{u}(t)-\mathscr{U}(t) \mathbf{u}_{0}}_{2}\lesssim{}&\norm{I_{1}(t)}_{2}+\norm{I_{2}(t)}_{2} \lesssim (1+t)^{-\frac{n}{4}-\frac{1}{4}} L(t),\\
\norm{\mathbf{B}(t)-\mathscr{B}(t) \mathbf{B}_{0}}_{2}\lesssim{}&\norm{II_{1}(t)}_{2}+\norm{II_{2}(t)}_{2} \lesssim (1+t)^{-\frac{n}{4}-\frac{1}{2}} L(t),
\end{align*}
which establishes the \(L^2\) decay results stated in items (ii) and (iv) of Theorem \ref{T1.3}.

\subsection{\(L^{\infty}\) asymptotics and higher derivative decay}\label{sub:3.3}
We devote the present section to establishing decay estimates for higher-order derivatives by means of the weighted energy method. Building on these results, we further deduce the \(L^\infty\) estimates stated in items (i), (iii) and (iv) of Theorem \ref{T1.3}.
\subsubsection{Decay estimates for higher order derivatives}\label{ss:3.3.1}
Combining estimate \eqref{1.9}, inequality \eqref{2.14} and item (i) of Theorem \ref{T1.3}, we arrive at the proposition below, which directly yields the decay estimates for higher-order derivatives.

\begin{proposition}\label{P3.7}
Let the regularity index satisfy \(s \geq s_0 + 1\), and suppose that \(E_0^s(t) \leq 1\) for every \(t \geq 0\). If \(\norm{(\mathbf{u}_0,\mathbf{B}_0)}_{H^s} + \norm{(\mathbf{u}_0,\mathbf{B}_0)}_1 \leq \eta_1\), then the inequality below holds for all \(t \geq 0\):
\begin{equation}\label{3.6}
E_{\sigma_{1}}^{s}(t)+D_{\sigma_{1}}^{s}(t) \lesssim \eta_1.
\end{equation}
\end{proposition}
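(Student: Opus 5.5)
We apply the weighted inequality \eqref{2.14} of Proposition \ref{P2.4} with $\sigma=\sigma_1=\frac n4-\varepsilon_1$. Its right-hand side then has four kinds of terms: the initial data, the quartic terms $E_{\sigma_1}^s(t)^4+D_{\sigma_1}^s(t)^4$, the unweighted dissipation $\sigma_1 D_0^s(t)^2$, and the lower-order weighted integral $\sigma_1\int_0^t(1+\tau)^{2\sigma_1-1}\norm{(a,\mathbf v,\mathbf B)}_2^2\,\mathrm d\tau$. The first and third are harmless. By \eqref{1.9} of Theorem \ref{T1.1}, $D_0^s(t)^2\lesssim\eta_0^2\lesssim\eta_1^2$ once $\eta_1\le\eta_0$, and the initial term is at most $\eta_1^2$.

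The key step is the weighted $L^2$ integral, which item (i) of Theorem \ref{T1.3} (with $p=2$) is designed to handle. We use the bound $\mathscr M_0(t)\lesssim\eta_1$ obtained right after \eqref{3.5}. This gives $\norm{\mathbf u(\tau)}_2+\norm{\mathbf B(\tau)}_2\lesssim\eta_1(1+\tau)^{-\frac n4}$. We then pass from $\mathbf u=(a,\mathbf m)$ to $(a,\mathbf v)$ using $\mathbf v=\mathbf m/(1+a)$ and $\norm{a}_\infty\le\frac12$, so that $\norm{\mathbf v}_2\lesssim\norm{\mathbf m}_2$. It follows that
\[
\int_0^t(1+\tau)^{2\sigma_1-1}\norm{(a,\mathbf v,\mathbf B)}_2^2\,\mathrm d\tau\lesssim\eta_1^2\int_0^t(1+\tau)^{\frac n2-2\varepsilon_1-1-\frac n2}\,\mathrm d\tau\lesssim\frac{\eta_1^2}{\varepsilon_1}.
\]
The bound is uniform in $t$ because the exponent $-1-2\varepsilon_1<-1$. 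This integrability is exactly why the weight is $\sigma_1=\frac n4-\varepsilon_1$ rather than $\frac n4$.

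Collecting these bounds, we obtain $X(t)^2\lesssim C_{\varepsilon_1}\eta_1^2+X(t)^4$ with $X(t)\triangleq E_{\sigma_1}^s(t)+D_{\sigma_1}^s(t)$. $X$ is continuous in $t$, and $X(0)\lesssim\norm{(a_0,\mathbf v_0,\mathbf B_0)}_{H^s}$ plus time-derivative terms at $t=0$, which are controlled by the data through the equations. A continuity argument therefore gives $X(t)\lesssim\eta_1$ for all $t$, provided $\eta_1$ is small relative to the fixed $\varepsilon_1$. We also need the hypotheses of Proposition \ref{P2.4}, namely $E_0^s\le1$ and the implicit Proposition \ref{P2.3}. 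These hold via Theorem \ref{T1.1}, and the condition $a\ge-\frac12$ follows from the remark after it.

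The main obstacle is the rigorous justification of \eqref{2.14} with a time weight, since the $\sigma$-terms in Proposition \ref{P2.1} were absorbed using $\delta D_\sigma^s$. That has already been done, so the only delicate point left is that $\sigma_1$ must sit strictly below $\frac n4$ for the $L^2$-integral to converge. This also means the implicit constant depends on $\varepsilon_1$, which is fixed.
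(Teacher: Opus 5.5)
Your proposal is correct and is essentially the paper's own argument. You apply \eqref{2.14} with $\sigma=\sigma_1$, bound $D_0^s(t)$ by \eqref{1.9}, and use the $L^2$ decay from Theorem~\ref{T1.3}(i), via $\mathscr{M}_0(t)\lesssim\eta_1$ and $\norm{\mathbf v}_2\lesssim\norm{\mathbf m}_2$, so that the lower-order weighted integral becomes $\eta_1^2\int_0^t(1+\tau)^{-1-2\varepsilon_1}\,\mathrm d\tau\lesssim\eta_1^2/\varepsilon_1$; you then close by continuity. One small slip: ``$\eta_0^2\lesssim\eta_1^2$ once $\eta_1\le\eta_0$'' runs the wrong way, and what you need instead is that the bound in \eqref{1.9} scales with the size of the data (its proof is \eqref{2.14} with $\sigma=0$), so data of size at most $\eta_1\le\eta_0$ give $D_0^s(t)\lesssim\eta_1$.
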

\subsubsection{\(L^{\infty}\) decay rates of global solutions}\label{ss:3.3.2}
We proceed to establish the \(L^\infty\) decay estimate stated in item (i) of Theorem \ref{T1.3}. Once this bound is established, the \(L^p\) estimates in item (i) of Theorem \ref{T1.3} for \(2 < p < \infty\) follow by interpolation, which completes the proof of Theorem \ref{T1.3} (i).

We derive the \(L^\infty\) estimate by combining the decay estimates for the linearized problem with the weighted energy bound \eqref{3.6}. We introduce the following weighted supremum quantities:
\begin{align*}
  M_{\infty, \frac{n}{2}}(t)\triangleq{}&\sup_{0 \leq \tau \leq t} (1+\tau)^{\frac{n}{2}} \norm{\mathbf{u}(\tau)}_{\infty},\\
B_{\infty, \frac{n}{2}}(t)\triangleq{}&\sup_{0 \leq \tau \leq t} (1+\tau)^{\frac{n}{2}} \norm{\mathbf{B}(\tau)}_{\infty},\\
\mathscr{M}_{\infty}(t)\triangleq{}& M_{\infty, \frac{n}{2}}(t)+B_{\infty, \frac{n}{2}}(t).
\end{align*}

Applying item (i) of Proposition \ref{P3.1} and item (i) of Proposition \ref{P3.4}, we arrive at
\begin{align*}
  \norm{I_{0}(t)}_{\infty} \lesssim{}&  t^{-\frac{n}{2}} \norm{\mathbf{u}_{0}}_{1} + e^{-c t}\norm{\mathbf{u}_{0}}_{H^{\left[\frac{n}{2}\right]+1} \times H^{\left[\frac{n}{2}\right]}},\\
\norm{II_{0}(t)}_{\infty} \lesssim{}&  t^{-\frac{n}{2}} \norm{\mathbf{B}_{0}}_{1}.
\end{align*}

To bound the terms \(I_1(t)\) and \(I_2(t)\), we first collect the corresponding estimates for the nonlinearities \(\mathscr{N}_1\) and \(\mathscr{N}_2\), which are summarized in the proposition below.

\begin{proposition}\label{P3.8}
Let the regularity index satisfy \(s \geq s_0 + 1\), and suppose that \(E_0^s(t) \leq 1\) and \(a(t) \geq -\frac{1}{2}\) for every \(t \geq 0\). Then the inequalities below hold for all \(t \geq 0\):
\begin{enumerate}
\item[(i)]
\[
\norm{\partial_{x} \mathscr{N}_{1}(t)}_{H^{\left[\frac{n}{2}\right]}} \lesssim (1+t)^{-\frac{n}{2}} \left(  E_{\sigma_{1}}^{s}(t)^{2} + M_{\sigma_{0}}^{1}(t)^{2} +M_{\infty, \frac{n}{2}}(t)^{2}\right),
\]
\item[(ii)]
\[
\norm{\partial_{x} \mathscr{N}_{1}(t)}_{1} \lesssim (1+t)^{-\frac{n}{2}-\frac{1}{2}\left(q(1)-\varepsilon_{1}\right)} \left( E_{\sigma_{1}}^{s}(t)^{2} +\mathscr{M}_{0}(t)^{2}\right),
\]
\item[(iii)]
\[
\norm{\partial_{x} \mathscr{N}_{2}(t)}_{H^{\left[\frac{n}{2}\right]}} \lesssim (1+t)^{-\frac{n}{2}} \left( E_{\sigma_{1}}^{s}(t)^{2} + B_{\sigma_{3}}^{1}(t)^{2}+B_{\infty, \frac{n}{2}}(t)^{2} \right).
\]
\end{enumerate}
\end{proposition}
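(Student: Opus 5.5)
We argue term by term on the explicit forms of $\mathscr{N}_1=\mathcal{N}_1$ and $\mathscr{N}_2=\mathcal{N}_2$, using the product and composition estimates of Lemma \ref{L4.1} (Moser-type bounds $\norm{\partial_x^k(fg)}_2\lesssim \norm{f}_\infty\norm{\partial_x^k g}_2+\norm{\partial_x^k f}_2\norm{g}_\infty$, plus smooth-function bounds for $I(a)$ and $\int_0^1(1-\theta)P''(a\theta+1)\,\mathrm{d}\theta$, which are legitimate since $a\ge -\frac12$ and $E_0^s\le 1$). Every entry of $\mathcal{N}_1$ is at least quadratic in $\mathbf{u}=(a,\mathbf{m})$: schematically it is $g(a)\,a^2$, $g(a)\,\mathbf{m}\otimes\mathbf{m}$, or one derivative of $g(a)\,a\,\mathbf{m}$. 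Every entry of $\mathcal{N}_2$ is quadratic in $\mathbf{B}$. Hence $\partial_x\mathscr N_1$ is a sum of terms $h(a)\,\partial_x^{\alpha}\mathbf{u}\,\partial_x^{\beta}\mathbf{u}$ (times lower-order factors of $\partial_x a$) with $|\alpha|+|\beta|\le 2$. Differentiating $[n/2]$ further times produces $h\,\partial_x^{\alpha}\mathbf u\,\partial_x^{\beta}\mathbf u$ with $|\alpha|+|\beta|\le [n/2]+2\le s_0+1\le s$.

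For (i) and (iii) we split each product so that one factor goes in $L^\infty$ and the other in $L^2$. When no derivative sits on a factor we use $\norm{\mathbf u}_\infty\le (1+t)^{-n/2}M_{\infty,\frac n2}$ (respectively $\norm{\mathbf B}_\infty\le(1+t)^{-n/2}B_{\infty,\frac n2}$). The remaining factor carries at least one derivative, so its $L^2$ or $L^\infty$ norm is bounded by $(1+t)^{-\sigma_1}E^s_{\sigma_1}$, using Sobolev embedding with $s\ge s_0+1$ to control $\norm{\partial_x\mathbf u}_\infty$ and $\norm{\partial_x^2\mathbf u}_\infty$ by $\norm{\partial_x\mathbf u}_{H^{s-1}}$. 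This gives $(1+t)^{-n/2-\sigma_1}$. For terms where both factors carry derivatives we use either $E^s_{\sigma_1}$ twice, which gives $(1+t)^{-2\sigma_1}=(1+t)^{-n/2+2\varepsilon_1}$ and is not quite enough, or one factor $\norm{\partial_x\mathbf u}_2\le(1+t)^{-\sigma_0}M^1_{\sigma_0}$ together with $\norm{\partial_x^{j}\mathbf u}_\infty\lesssim (1+t)^{-\sigma_1}E^s_{\sigma_1}$. The latter gives $(1+t)^{-\sigma_0-\sigma_1}\le(1+t)^{-n/2}$ as soon as $q(1)\ge\varepsilon_1$. Since every product with both factors differentiated can be arranged so that the lowest-order derivative factor ($\partial_x\mathbf u$) sits in $L^2$, the highest-order one sits in $L^\infty$ via Sobolev, and the total order fits in $H^s$, the $M^1_{\sigma_0}$ bound can always be used once. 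Then Young's inequality $xy\lesssim x^2+y^2$ yields the squared right-hand sides. For $\mathcal N_2$ the same scheme applies, with $B^1_{\sigma_3}$ replacing $M^1_{\sigma_0}$; this is even better, since $\sigma_3\ge\sigma_0$.

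For (ii) we apply Hölder in $L^1$: $\norm{fg}_1\le\norm f_2\norm g_2$. Terms of type $\mathbf u\,\partial_x\mathbf u$ give $(1+t)^{-n/4}M^0_{n/4}\cdot(1+t)^{-\sigma_0}M^1_{\sigma_0}$, which is better than claimed. Terms with second derivatives, such as $a\,\partial_x^2\mathbf m$ or $\partial_x a\,\partial_x\mathbf m$, are the delicate ones. For these we interpolate: $\norm{\partial_x^2\mathbf u}_2\le\norm{\partial_x\mathbf u}_2^{1/2}\norm{\partial_x^3\mathbf u}_2^{1/2}\lesssim (1+t)^{-\sigma_0/2-\sigma_1/2}(M^1_{\sigma_0}E^s_{\sigma_1})^{1/2}$. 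Combined with $\norm{\mathbf u}_2\le (1+t)^{-n/4}M^0_{n/4}$, this gives the rate $n/4+\sigma_0/2+\sigma_1/2=\frac n2+\frac12(q(1)-\varepsilon_1)$, exactly the exponent in (ii). The expected obstacle is this bookkeeping, in particular making sure that $\partial_x a$ (which carries only $H^{s-1}$ regularity) never needs more derivatives than $E^s_{\sigma_1}$ provides. It also requires checking that the worst term in (ii) is indeed the $a\,\partial_x^2\mathbf m$ interpolation, rather than $\partial_x a\,\partial_x\mathbf m$, which gives $(1+t)^{-\sigma_0-\sigma_1}$ and is better.
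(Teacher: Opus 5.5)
Your treatment of (ii) is correct and follows the paper's route: H\"older in $L^1$ plus $\|\partial_x^2\mathbf u\|_2\lesssim\|\partial_x\mathbf u\|_2^{1/2}\|\partial_x^3\mathbf u\|_2^{1/2}$. This is the structure of Proposition 3.5(ii), now with $\|\partial_x^3\mathbf u\|_2\lesssim(1+t)^{-\sigma_1}E^s_{\sigma_1}$.

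\textbf{The gap.} It is in (i), and in (iii) when $n\ge4$, in the products where both factors carry derivatives. With only $s\ge s_0+1$, you have $H^s\hookrightarrow W^{j,\infty}$ only for $j\le s-s_0=1$. So your claim that $\|\partial_x^2\mathbf u\|_\infty$ is controlled by $\|\partial_x\mathbf u\|_{H^{s-1}}$ is false. The arrangement ``$\partial_x\mathbf u$ in $L^2$, the higher-order factor in $L^\infty$'' fails exactly on the critical terms. Indeed, $\partial_x^{[n/2]+1}\partial_k(a m_j/(1+a))$ contains $\partial_x a\,\partial_x^{[n/2]+1}\mathbf m$, and $\partial_x^{[n/2]+1}\mathbf m\in L^\infty$ would need $s\ge 2[n/2]+2$. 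Two concrete cases:
\begin{itemize}
\item For $n=3$, $s=3$, the term $\partial_x a\,\partial_x^2\mathbf m$ in $\|\partial_x\mathscr N_1\|_{H^1}$ has $\partial_x^2\mathbf m\in H^1(\mathbb R^3_+)\not\subset L^\infty$.
\item For $n=4$, $s=4$, the term $\partial_x\mathbf B\,\partial_x^2\mathbf B$ in $\|\partial_x\mathscr N_2\|_{H^2}$ has the same problem.
\end{itemize}
The only split left then gives $(1+t)^{-2\sigma_1}=(1+t)^{-\frac n2+2\varepsilon_1}$, which you yourself note is too weak. So (i) is not established as written.

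\textbf{The fix.} Do not expand the derivatives before splitting. Apply the Moser inequality you quoted to the undifferentiated factors. For example, for $\ell\le[n/2]+2\le s$,
\[
\|\partial_x^\ell(a\,\mathbf m\,g(a))\|_2\lesssim\|\mathbf u\|_\infty\|\partial_x^\ell\mathbf u\|_2 ,
\]
combined with a composition bound for $g(a)=1/(1+a)$. The proof of that inequality already absorbs the mixed terms through $\|\partial_x^jf\|_{2\ell/j}\lesssim\|f\|_\infty^{1-j/\ell}\|\partial_x^\ell f\|_2^{j/\ell}$. Hence every term in (i) and (iii) is bounded by $(1+t)^{-\frac n2-\sigma_1}(M_{\infty,\frac n2}+B_{\infty,\frac n2})E^s_{\sigma_1}$, without even needing $M^1_{\sigma_0}$.

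Alternatively, use Gagliardo--Nirenberg (Lemmas 4.1--4.2) to interpolate between $\|\partial_x\mathbf u\|_2\lesssim(1+t)^{-\sigma_0}M^1_{\sigma_0}$ and $\|\partial_x^{s}\mathbf u\|_2\lesssim(1+t)^{-\sigma_1}E^s_{\sigma_1}$. A mixed product then decays like $(1+t)^{-(2-A)\sigma_0-A\sigma_1}$, where $A\le([n/2]+\frac n2)/([n/2]+1)<2$, and this beats $n/2$ once $\varepsilon_1$ is small. In the $n=3$ example,
\[
\|\partial_xa\|_3\|\partial_x^2\mathbf m\|_6\lesssim\|\partial_xa\|_2^{1/2}\|\partial_x^2a\|_2^{1/2}\|\partial_x^2\mathbf m\|_{H^1},
\]
which gives the rate $2\sigma_1+\frac12(\sigma_0-\sigma_1)>\frac32$. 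This fractional use of $M^1_{\sigma_0}$ is what the paper's appeal to Lemmas 4.1--4.3, following Kagei--Kobayashi, supplies.
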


\begin{proof}
Following the argument in \cite{Kagei-Kobayashi2005}, the inequalities in Proposition \ref{P3.8} follow from a direct application of Lemmas \ref{L4.1}–\ref{L4.3} presented in the Appendix \ref{sec:4}.
\end{proof}

We proceed to estimate the Duhamel terms \(I_1(t)\) and \(I_2(t)\).
Combining item (iii) of Proposition \ref{P3.3} with items (i) and (iii) of Proposition \ref{P3.8}, we deduce that
\begin{align*}
\norm{I_{1}(t)}_{\infty} \lesssim{}&  \int_{t-1}^{t} (t-\tau)^{-(1-\tilde{\varepsilon})} \left(\norm{\operatorname{div} \mathscr{N}_{1}(\tau)}_{H^{\left[\frac{n}{2}\right]}}+\norm{\operatorname{div} \mathscr{N}_{2}(\tau)}_{H^{\left[\frac{n}{2}\right]}}\right) \mathrm{d}\tau \\
 \lesssim{}&  \left(E_{\sigma_{1}}^{s}(t)^{2} + M_{\sigma_{0}}^{1}(t)^{2}+ M_{\infty, \frac{n}{2}}(t)^{2} + B_{\sigma_{3}}^{1}(t)^{2} + B_{\infty, \frac{n}{2}}(t)^{2} \right)\\
 &\cdot  \int_{t-1}^{t} (t-\tau)^{-(1-\tilde{\varepsilon})} (1+\tau)^{-\frac{n}{2}} \mathrm{d}\tau \\
 \lesssim{}& (1+t)^{-\frac{n}{2}} \left( E_{\sigma_{1}}^{s}(t)^{2}+ \mathscr{M}_{0}(t)^{2}+\mathscr{M}_{\infty}(t)^{2} \right).
\end{align*}

Turning to the long-time contribution \(I_2(t)\), we invoke item (i) of Proposition \ref{P3.1}, item (vi) of Proposition \ref{P3.5}, Proposition \ref{P3.8} and Lemma \ref{L4.4} to obtain
\begin{align*}
\norm{I_{2}(t)}_{\infty} \lesssim{}& \int_{0}^{t-1} (t-\tau)^{-\frac{n}{2}} \left(\norm{\operatorname{div} \mathscr{N}_{1}(\tau)}_{1}+\norm{\operatorname{div} \mathscr{N}_{2}(\tau)}_{1}\right) \mathrm{d}\tau\\
& + \int_{0}^{t-1} e^{-c(t-\tau)} \left(\norm{\operatorname{div} \mathscr{N}_{1}(\tau)}_{H^{\left[\frac{n}{2}\right]}}+\norm{\operatorname{div} \mathscr{N}_{2}(\tau)}_{H^{\left[\frac{n}{2}\right]}}\right) \mathrm{d}\tau \\
\lesssim{}& \left( E_{\sigma_{1}}^{s}(t)^{2} + \mathscr{M}_{0}(t)^{2}\right) \int_{0}^{t-1} (t-\tau)^{-\frac{n}{2}} (1+\tau)^{-\frac{n}{2}-\frac{1}{2}\left(q(1)-\varepsilon_{1}\right)} \mathrm{d}\tau \\
&+ \left(B_{\frac{n}{4}}^{0}(t)^{2}+B_{\sigma_{3}}^{1}(t)^{2}\right)\int_{0}^{t-1} (t-\tau)^{-\frac{n}{2}} (1+\tau)^{-\frac{n}{4}-\sigma_{3}} \mathrm{d}\tau \\
&+ \left(E_{\sigma_{1}}^{s}(t)^{2} + M_{\sigma_{0}}^{1}(t)^{2} + M_{\infty, \frac{n}{2}}(t)^{2}+B_{\sigma_{3}}^{1}(t)^{2} + B_{\infty, \frac{n}{2}}(t)^{2}\right)\\
 &\cdot  \int_{0}^{t-1} e^{-c(t-\tau)} (1+\tau)^{-\frac{n}{2}} \mathrm{d}\tau \\
\lesssim{}&(1+t)^{-\frac{n}{2}} \left(E_{\sigma_{1}}^{s}(t)^{2} + \mathscr{M}_{0}(t)^{2}+ \mathscr{M}_{\infty}(t)^{2} \right).
\end{align*}

To bound the terms \(II_1(t)\) and \(II_2(t)\), we first collect the corresponding estimate for the nonlinearity \(\mathscr{N}_3\), which is stated in the proposition below.

\begin{proposition}\label{P3.9}
Let the regularity index satisfy \(s \geq s_0 + 1\), and suppose that \(E_0^s(t) \leq 1\) and \(a(t) \geq -\frac{1}{2}\) for every \(t \geq 0\). Then the inequality below holds for all \(t \geq 0\):
\[
\norm{\mathscr{N}_{3}(t)}_{\infty} \lesssim (1+t)^{-n} \mathscr{M}_{\infty}(t)^{2}.
\]
\end{proposition}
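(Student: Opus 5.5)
The bound follows from the explicit algebraic form of \(\mathscr{N}_3\), a pointwise product estimate, and the definition of \(\mathscr{M}_{\infty}(t)\). Recall that
\[
(\mathscr{N}_3)_{jk}=\frac{1}{a+1}\left(m_jB_k-B_jm_k\right),
\]
which contains no derivatives of the unknowns. It is therefore a purely pointwise bilinear expression in \((\mathbf{m},\mathbf{B})\), multiplied by the coefficient \((a+1)^{-1}\).

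\emph{Step 1: bound the coefficient.} By hypothesis \(a(t)\ge -\frac12\) for all \(t\ge0\), so \(a+1\ge \frac12\) pointwise and
\[
\norm{(a+1)^{-1}}_{\infty}\le 2 .
\]
This is the only place the hypothesis \(a\ge-\frac12\) enters. The assumption \(E_0^s(t)\le1\) with \(s\ge s_0+1\) only guarantees, via Sobolev embedding, that the quantities involved are finite; it is not needed quantitatively.

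\emph{Step 2: pointwise estimate.} Using Hölder's inequality in \(L^\infty\) entrywise,
\[
\norm{\mathscr{N}_3(t)}_{\infty}\le 2\cdot 2\,\norm{\mathbf{m}(t)}_{\infty}\norm{\mathbf{B}(t)}_{\infty}\lesssim \norm{\mathbf{u}(t)}_{\infty}\norm{\mathbf{B}(t)}_{\infty},
\]
since \(\mathbf{m}\) is a component of \(\mathbf{u}=(a,\mathbf{m})\).

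\emph{Step 3: insert the weights.} By definition of \(M_{\infty,\frac n2}\) and \(B_{\infty,\frac n2}\),
\[
\norm{\mathbf{u}(t)}_\infty\le (1+t)^{-\frac n2}M_{\infty,\frac n2}(t),\qquad \norm{\mathbf{B}(t)}_\infty\le (1+t)^{-\frac n2}B_{\infty,\frac n2}(t).
\]
Hence
\[
\norm{\mathscr{N}_3(t)}_\infty\lesssim (1+t)^{-n}M_{\infty,\frac n2}(t)B_{\infty,\frac n2}(t)\le (1+t)^{-n}\mathscr{M}_\infty(t)^2,
\]
where the last step uses \(xy\le (x+y)^2\) for \(x,y\ge0\).

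There is no real obstacle: unlike \(\mathscr{N}_1\), the matrix \(\mathscr{N}_3\) involves no derivatives or higher-order Taylor remainders. The only point needing care is the uniform lower bound on \(a+1\), which is supplied by the stated hypothesis (or by the Remark following Theorem \ref{T1.1}).
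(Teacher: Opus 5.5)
Your proof is correct and follows the paper's approach: the paper just says the bound follows from H\"older's inequality. Your steps spell that out, namely bounding $(a+1)^{-1}$ by $2$ using $a\ge-\frac12$, the estimate $\norm{\mathscr{N}_3}_\infty\lesssim\norm{\mathbf{m}}_\infty\norm{\mathbf{B}}_\infty$, and inserting the weights from $M_{\infty,\frac n2}$ and $B_{\infty,\frac n2}$.
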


\begin{proof}
The desired estimate follows from a direct application of Hölder’s inequality.
\end{proof}

We proceed to estimate the magnetic Duhamel terms \(II_1(t)\) and \(II_2(t)\).
Exploiting the boundary condition \(\mathscr{N}_3\big|_{x_n=0} = \mathbf{0}\) and combining item (ii) of Proposition \ref{P3.4} with Proposition \ref{P3.9}, we deduce that
\begin{align*}
\norm{II_{1}(t)}_{\infty} \lesssim{}&  \int_{t-1}^{t} (t-\tau)^{-\frac{1}{2}} \norm{ \mathscr{N}_{3}(\tau)}_{\infty} \mathrm{d}\tau \\
\lesssim{}&  \mathscr{M}_{\infty}(t)^{2} \int_{t-1}^{t} (t-\tau)^{-\frac{1}{2}} (1+\tau)^{-n} \mathrm{d}\tau \\
\lesssim{}& (1+t)^{-n} \mathscr{M}_{\infty}(t)^{2}.
\end{align*}

Turning to the long-time contribution \(II_2(t)\), we again use the boundary condition \(\mathscr{N}_3\big|_{x_n=0} = \mathbf{0}\) together with items (i) and (ii) of Proposition \ref{P3.4} and items (iii) and (iv) of Proposition \ref{P3.6} to obtain
\begin{align*}
\norm{II_{2}(t)}_{\infty} \lesssim{} & \int_{0}^{\frac{t-1}{2}} (t-\tau)^{-\frac{n}{2}-\frac{1}{2}} \norm{\mathscr{N}_{3}(\tau)}_{1} \mathrm{d}\tau+\int^{t-1}_{\frac{t-1}{2}} (t-\tau)^{-\frac{n}{2}} \norm{\operatorname{div}\mathscr{N}_{3}(\tau)}_{1} \mathrm{d}\tau\\
\lesssim{} & \mathscr{M}_{0}(t)^{2}\left(\int_{0}^{\frac{t-1}{2}} (t-\tau)^{-\frac{n}{2}-\frac{1}{2}} (1+\tau)^{-\frac{n}{2}} \mathrm{d}\tau+\int^{t-1}_{\frac{t-1}{2}} (t-\tau)^{-\frac{n}{2}} (1+\tau)^{-\frac{n}{2}-q(1)} \mathrm{d}\tau\right) \\
\lesssim{} & (1+t)^{-\frac{n}{2}-q(1)}L(t) \mathscr{M}_{0}(t)^{2}.
\end{align*}

Gathering all the foregoing estimates, we conclude that
\[
\mathscr{M}_{\infty}(t) \lesssim  \norm{(\mathbf{u}_{0},\mathbf{B}_{0})}_{1}+\norm{\mathbf{u}_{0}}_{H^{\left[\frac{n}{2}\right]+1} \times H^{\left[\frac{n}{2}\right]}}+ E_{\sigma_{1}}^{s}(t)^{2} + \mathscr{M}_{0}(t)^{2} + \mathscr{M}_{\infty}(t)^{2}.
\]

As a consequence, the bound \(\mathscr{M}_\infty(t) \lesssim \eta_1\) holds provided that \(\norm{(\mathbf{u}_0,\mathbf{B}_0)}_{H^s} + \norm{(\mathbf{u}_0,\mathbf{B}_0)}_1 \leq \eta_1\) for some sufficiently small constant \(\eta_1 > 0\).

With these bounds in hand, we further deduce that
\[
\norm{\mathbf{B}(t)-\mathscr{B}(t) \mathbf{B}_{0}}_{\infty}\lesssim\norm{II_{1}(t)}_{\infty}+\norm{II_{2}(t)}_{\infty} \lesssim (1+t)^{-\frac{n}{2}-q(1)} L(t).
\]

This completes the proof of the \(L^\infty\) decay estimates stated in items (i) and (iv) of Theorem \ref{T1.3}.

\subsubsection{Improved decay estimates for higher order derivatives}\label{ss:3.3.3}
Before stating the main results of this subsection, we first introduce the following key energy and dissipation functionals:
\begin{align*}
   \widetilde{E}_{\sigma_2 }^{s}(t)\triangleq {}& \sup_{0\leq \tau \leq t}(1+\tau)^{\sigma_2 }\bigg(\norm{\partial_{x'}(a, \mathbf{v})(\tau)}_{2}+\norm{\partial_{x}^2{a}(\tau)}_{2}+\norm{\partial_{x'}\partial_{x}\mathbf{v}(\tau)}_{2}+\norm{\partial_x^3(a, \mathbf{v})(\tau)}_{H^{s-3}}\bigg) \\
     &+\sum_{j=1}^{\left[\frac{s+1}{2}\right]}\sup_{0\leq \tau \leq t}(1+\tau)^{\sigma_2 } \norm{\partial_{\tau}^{j} a(\tau)}_{H^{s+1-2 j}} + \sum_{j=1}^{\left[\frac{s}{2}\right]}\sup_{0\leq \tau \leq t} (1+\tau)^{\sigma_2 }\norm{\partial_{\tau}^{j} \mathbf{v}(\tau)}_{H^{s-2 j}},\\
\widetilde{D}_{\sigma_2 }^{s}(t)\triangleq{}& \Bigg(\int_{0}^{t}(1+\tau)^{2\sigma_2 }\bigg(\norm{\partial_{x'}\partial_{x}\mathbf{v}(\tau)}_{2}^{2}+\norm{\partial_{x}^2 a (\tau)}_{H^{s-2}}^{2}+\norm{\partial_{x}^3\mathbf{v}(\tau)}_{H^{s-2}}^{2} \\
&+\norm{\partial_x\partial_{\tau} (a, \mathbf{v})(\tau)}_{H^{s-2}}^{2}\bigg) \mathrm{d} \tau+\sum_{j=2}^{\left[\frac{s+1}{2}\right]} \int_{0}^{t}(1+\tau)^{2\sigma_2 }\norm{\partial_{\tau}^{j} (a, \mathbf{v})(\tau)}_{H^{s+1-2 j}}^{2} \mathrm{d} \tau \Bigg)^{\frac{1}{2}},\\
  \widetilde{M}_{\sigma_0}^{3}(t)\triangleq{}&  \sup_{0\leq \tau \leq t}(1+\tau)^{\sigma_0 }\left(\norm{\partial_{x}^2\mathbf{u}(\tau)}_{2}+\norm{\partial_{\tau}\mathbf{u}(\tau)}_{H^1}+\norm{\partial_{x'}\partial_{x}^2\mathbf{m}(\tau)}_{2}\right),\\
   \widetilde{F}_{\sigma_4 }^{s}(t)\triangleq{} & \sup_{0\leq \tau \leq t}(1+\tau)^{\sigma_4 }\left(\norm{\partial_{x'}\mathbf{B}(\tau)}_{2}+\norm{\partial_{x'}\partial_{x}\mathbf{B}(\tau)}_{2}+\norm{\partial_x^3 \mathbf{B}(\tau)}_{H^{s-3}}\right) \\
    & + \sum_{j=1}^{\left[\frac{s}{2}\right]}\sup_{0\leq \tau \leq t} (1+\tau)^{\sigma_4 }\norm{\partial_{\tau}^{j} \mathbf{B}(\tau)}_{H^{s-2 j}},\\
\widetilde{G}_{\sigma_4 }^{s}(t)\triangleq{}& \Bigg( \int_{0}^{t}(1+\tau)^{2\sigma_4 }\bigg(\norm{\partial_{x'}\partial_{x}\mathbf{B}(\tau)}_{2}^{2}+\norm{\partial_{x}^3\mathbf{B}(\tau)}_{H^{s-2}}^{2} +\norm{\partial_x\partial_{\tau}  \mathbf{B}(\tau)}_{H^{s-2}}^{2}\bigg) \mathrm{d} \tau\\
&+\sum_{j=2}^{\left[\frac{s+1}{2}\right]} \int_{0}^{t}(1+\tau)^{2\sigma_4 }\norm{\partial_{\tau}^{j} \mathbf{B}(\tau)}_{H^{s+1-2 j}}^{2} \mathrm{d} \tau \Bigg)^{\frac{1}{2}},\\
\widetilde{B}_{\sigma_5}^{2}(t)\triangleq{}&  \sup_{0\leq \tau \leq t}(1+\tau)^{\sigma_5}\left(\norm{\partial_{x_n}^2\mathbf{B}(\tau)}_{2}+\norm{\partial_{\tau}\mathbf{B}(\tau)}_{2}\right),\\
\mathscr{M}_{1}(t)\triangleq{}&  M_{\sigma_0}^{1}(t)+\widetilde{M}_{\sigma_0}^{3}(t)+\widetilde{E}_{\sigma_2}^{s}(t)+B_{\sigma_3}^{1}(t)+\widetilde{B}_{\sigma_5}^{2}(t)+\widetilde{F}_{\sigma_4}^{s}(t).
\end{align*}

We next proceed to refine the decay estimates for higher-order derivatives, which are formulated in the propositions below. To ease the presentation, we first define the following nonlinear remainder terms:
\begin{align*}
\widetilde{N}_{\sigma_2}^{s}(t)\triangleq{}&\Bigg((1+t)^{2\sigma_2 }\left( \norm{\partial_{x}^{2}\left( a  \operatorname{div} \mathbf{v}\right)}_{H^{s-3}}^{2}+\norm{\partial_{x}^{2} \left(\mathbf{v} \cdot \nabla  a\right)}_{H^{s-3}}^{2}\right)\\
& +\sum_{j=1}^{\left[\frac{s-1}{2}\right]}(1+t)^{2\sigma_2 }\left( \norm{\partial_{t}^{j}\left( a  \operatorname{div} \mathbf{v}\right)}_{H^{s-1-2j}}^{2}+\norm{\partial_{t}^{j} \left(\mathbf{v} \cdot \nabla  a\right)}_{H^{s-1-2j}}^{2}\right)\\
& +(1+t)^{2\sigma_2 } \norm{\partial_{x}{f}_2}_{H^{s-3}}^{2}+\sum_{j=1}^{\left[\frac{s-2}{2}\right]}(1+t)^{2\sigma_2 } \norm{\partial_{t}^{j}{f}_2}_{H^{s-2-2j}}^{2}\\
&+ \int_{0}^{t} (1+\tau)^{2\sigma_2 }\left(\norm{\partial_{x}\left( a  \operatorname{div} \mathbf{v}\right)}_{H^{s-1}}^{2}+\sum_{j=1}^{\left[\frac{s}{2}\right]}\norm{\partial_{\tau}^{j}\left( a  \operatorname{div} \mathbf{v}\right)}_{H^{s-2j}}^{2}\right) \mathrm{d} \tau\\
& +\int_{0}^{t} (1+\tau)^{2\sigma_2 }\left(\norm{\partial_{x}^{2} \left(\mathbf{v} \cdot \nabla  a\right)}_{H^{s-3}}^{2}+\sum_{j=1}^{\left[\frac{s-1}{2}\right]}\norm{\partial_{\tau}^{j} \left(\mathbf{v} \cdot \nabla  a\right)}_{H^{s-1-2j}}^{2}\right) \mathrm{d} \tau \\
& +\int_{0}^{t} (1+\tau)^{2\sigma_2 }\left(\norm{\partial_{x}{f}_2}_{H^{s-2}}^{2}+\sum_{j=1}^{\left[\frac{s-1}{2}\right]}\norm{\partial_{\tau}^{j} {f}_2}_{H^{s-1-2j}}^{2}\right) \mathrm{d} \tau \\
& +\sum_{1 \leq k+2 j \leq s} \int_{0}^{t}  (1+\tau)^{2\sigma_2 }  \abs{A_{k,j}}  \mathrm{d} \tau + \sum_{1 \leq k+2 j \leq s-1} \int_{0}^{t}(1+\tau)^{2\sigma_2 }\abs{B_{k,j}}\mathrm{d} \tau\\
& +\sum_{1 \leq k+\ell+2 j \leq s-1} \int_{0}^{t} (1+\tau)^{2\sigma_2 }\abs{C_{k,\ell,j}} \mathrm{d} \tau\Bigg)^{\frac{1}{2}}, \\
\widetilde{H}_{\sigma_4}^{s}(t)\triangleq{}&\Bigg((1+t)^{2\sigma_4 } \norm{\partial_{x}{f}_3}_{H^{s-3}}^{2}+\sum_{j=1}^{\left[\frac{s-2}{2}\right]}(1+t)^{2\sigma_4 } \norm{\partial_{t}^{j}{f}_3}_{H^{s-2-2j}}^{2}\\
& +\int_{0}^{t} (1+\tau)^{2\sigma_4 }\left(\norm{\partial_{x}{f}_3}_{H^{s-2}}^{2}+\sum_{j=1}^{\left[\frac{s-1}{2}\right]}\norm{\partial_{\tau}^{j} {f}_3}_{H^{s-1-2j}}^{2}\right) \mathrm{d} \tau \\
& +\sum_{1 \leq k+2 j \leq s} \int_{0}^{t}  (1+\tau)^{2\sigma_4 }  \abs{D_{k,j}}  \mathrm{d} \tau + \sum_{1 \leq k+2 j \leq s-1} \int_{0}^{t}(1+\tau)^{2\sigma_4 }\abs{E_{k,j}}\mathrm{d} \tau\Bigg)^{\frac{1}{2}},
\end{align*}

We then have the following result.

\begin{proposition}\label{P3.10}
Let the regularity index satisfy \(s \geq s_0 + 2\), and suppose that \(E_0^s(t) \leq 1\) for every \(t \geq 0\).
Then the inequalities below hold for all \(t \geq 0\):
\begin{equation}\label{3.7}
\begin{aligned}
\widetilde{E}_{\sigma_2 }^{s}(t)^{2}+\widetilde{D}_{\sigma_2 }^{s}(t)^{2} \lesssim{}& \norm{( a _0,\mathbf{v}_0)}_{H^s}^2+\widetilde{N}_{\sigma_2 }^{s}(t)^{2}+\sigma_2 \bigg(\delta \widetilde{D}_{\sigma_2 }^{s}(t)^{2}+D_{0}^{s}(t)^{2}\\
&+\int_{0}^{t}  (1+\tau)^{2\sigma_2 -1} \left(\norm{\partial_{x'}({a}, \mathbf{v})}_{2}^{2}+\norm{\partial_{\tau}({a}, \mathbf{v})}_{2}^{2}\right) \mathrm{d} \tau\bigg),
\end{aligned}
\end{equation}
\begin{equation}\label{3.8}
\begin{aligned}
\widetilde{F}_{\sigma_4 }^{s}(t)^{2}+\widetilde{G}_{\sigma_4 }^{s}(t)^{2} \lesssim{}& \norm{\mathbf{B}_0}_{H^s}^2+\widetilde{H}_{\sigma_4 }^{s}(t)^{2}+\sigma_4 \bigg(\delta \widetilde{G}_{\sigma_4 }^{s}(t)^{2}+D_{0}^{s}(t)^{2}\\
&+\int_{0}^{t}  (1+\tau)^{2\sigma_4 -1} \left(\norm{\partial_{x'}\mathbf{B}}_{2}^{2}+\norm{\partial_{\tau} \mathbf{B}}_{2}^{2}\right) \mathrm{d} \tau\bigg)
\end{aligned}
\end{equation}
for any \(\delta>0\).
\end{proposition}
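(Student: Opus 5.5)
The plan is to rerun the synthesis argument of Proposition \ref{P2.2}, but to start every estimate one derivative higher, i.e. with at least one tangential derivative or one time derivative, and never at the zeroth level.

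\textbf{Step 1 (tangential and time derivatives).} Apply items (i), (ii), (vii), (viii) of Proposition \ref{P2.1} with weight \(\sigma=\sigma_2\) (resp. \(\sigma_4\)), restricted to indices with \(1\le k+2j\le s\) (resp. \(\le s-1\)). This matches the ranges of \(A_{k,j},B_{k,j},D_{k,j},E_{k,j}\) in \(\widetilde N_{\sigma_2}^s\) and \(\widetilde H_{\sigma_4}^s\).

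\textbf{Step 2 (the \(\sigma\)-weighted error terms).} Proposition \ref{P2.1} produces terms \(\sigma\int_0^t(1+\tau)^{2\sigma-1}\|\partial_{x'}^k\partial_\tau^j(a,\mathbf v)\|_2^2\). Sort them by level.
\begin{itemize}
\item For \(k+2j\ge 2\), or \(k\ge1\) together with one extra derivative, the quantity is a dissipated one. Apply the elementary inequality from the proof of Proposition \ref{P2.2} to bound it by \(\delta\widetilde D^2_{\sigma_2}+D_0^s(t)^2\).
\item The lowest ones, \(\|\partial_{x'}(a,\mathbf v)\|_2\) and \(\|\partial_\tau(a,\mathbf v)\|_2\), are not controlled by \(\widetilde D\). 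For density, \(\partial_{x'}a\) itself is not dissipated. These are kept explicitly as the last integral in \eqref{3.7}, and likewise for \(\mathbf B\) in \eqref{3.8}.
\end{itemize}
The initial-data terms are bounded by \(\|(a_0,\mathbf v_0)\|_{H^s}^2\) and \(\|\mathbf B_0\|_{H^s}^2\) via the equations and compatibility conditions.

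\textbf{Step 3 (normal derivatives, fluid part).} Recover normal derivatives by the induction on \(\ell\) of \(\eqref{2.5}_\ell\)–\(\eqref{2.7}_\ell\). Each time \(\ell\) is shifted, use statements (iii)–(v) of Proposition \ref{P2.1}, always with \(k+\ell+2j\ge1\).
\begin{itemize}
\item Statement (iii) with \(k\ge1\) or \(\ell\ge1\) controls \(\partial_{x'}\partial_{x_n}a\) and \(\partial^2_{x_n}a\), hence \(\|\partial_x^2a\|_2\).
\item Statement (iv) gives \(\partial_{x'}\partial_x\mathbf v\) and \(\partial_x^3\mathbf v\) pointwise in time, in terms of \(\partial_x^2a\), tangential \(\mathbf v\)-derivatives, \(\partial_t\partial_x\mathbf v\) and \(\partial_x f_2\).
\item Statement (v) gives the dissipative integrals \(\|\partial^2_xa\|_{H^{s-2}}\) and \(\|\partial_x^3\mathbf v\|_{H^{s-2}}\).
\end{itemize}
The time-derivative quantities in \(\widetilde E\) and \(\widetilde D\) follow from statement (vi) with \(k\ge 1\) or \(j\ge1\), together with the Step 1 bounds for \(\partial_\tau(a,\mathbf v)\).

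\textbf{Step 3 (normal derivatives, magnetic part).} Use (ix) and (x) starting from \(\partial_{x'}\partial_x\mathbf B\) and \(\partial_x^3\mathbf B\). This needs only (vii), (viii) at levels \(\ge1\) and \(\partial_x f_3\), \(\partial_t^jf_3\), which appear in \(\widetilde H^s_{\sigma_4}\). Summing all estimates yields \eqref{3.7} and \eqref{3.8}.

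\textbf{Main obstacle.} The delicate point is the bookkeeping that no zeroth-order quantity enters the inductions. In particular, \(\|\partial_x a\|_2\) and \(\|\partial_x\mathbf v\|_2\) must not appear with weight \(\sigma_2\), since they decay only at rate \(\sigma_0\). Concretely, when statement (iv) at \(\ell=0,k=0\) would need \(\|\partial_x a\|\), I would instead use it only for \(\partial_{x_n}^2\partial_{x'}\mathbf v\) and higher. \(\partial_x^2\mathbf v\) with no tangential derivative is excluded from \(\widetilde E\) by design; it sits in \(\widetilde M^3_{\sigma_0}\).

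Where a lower-order term is unavoidable, I would absorb it into \(D_0^s(t)^2\). The factor \(\sigma_2\) in front of \(D_0^s\) reflects that such terms arise only through the weight derivative. The weight-derivative integrals themselves are handled by splitting as in Step 2.
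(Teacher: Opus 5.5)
Your plan follows the paper's own proof. The paper reruns the synthesis of Proposition~\ref{P2.2} with every index range started at level one (\(1\le k+2j\), \(1\le k+\ell+2j\)). It bounds the weight-derivative terms at the higher levels by \(\delta\widetilde D^{s}_{\sigma_2}(t)^2+D^s_0(t)^2\), and keeps \(\norm{\partial_{x'}(a,\mathbf v)}_2\), \(\norm{\partial_\tau(a,\mathbf v)}_2\) and their \(\mathbf B\) analogues in the last integral.

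One index range needs tightening. The paper uses item (vi) of Proposition~\ref{P2.1} only for \(2\le k+2j\le s-1\), not for \(k\ge1\) or \(j\ge1\) as you propose. At \(k=1\), \(j=0\) its right-hand side contains \((1+t)^{2\sigma_2}\norm{\partial_x^2\mathbf v(t)}_2^2\), \(\int_0^t(1+\tau)^{2\sigma_2}\norm{\partial_{x_n}^2\mathbf v}_2^2\,\mathrm{d}\tau\) and \(\norm{\partial_x(\mathbf v\cdot\nabla a)}_2\). None of these lies in \(\widetilde E^s_{\sigma_2}\), \(\widetilde D^s_{\sigma_2}\) or \(\widetilde N^s_{\sigma_2}\). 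Your scheme already contains a replacement: \(\partial_{x'}\partial_t a\) comes from items (i) with \((k,j)=(1,1)\) and (ii) with \((k,j)=(1,0)\), and \(\partial_{x_n}\partial_t a\) from item (iii) with \((k,\ell,j)=(0,0,1)\).

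Your stated reason for excluding the first-order quantities is also off. They do not decay too slowly: \(\sigma_0>\sigma_2\), so their suprema with weight \(\sigma_2\) are harmless. The problem is that their weighted time integrals diverge, since \((1+\tau)^{2\sigma_2-2\sigma_0}=(1+\tau)^{-4\varepsilon_1}\) is not integrable. The extra factor \((1+\tau)^{-1}\) from the weight derivative makes them integrable, which is exactly why they survive only in the last integral of \eqref{3.7}.
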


\begin{proof}
Proceeding along the same lines as in the proof of Proposition \ref{P2.2}, we first establish a sequence of intermediate estimates.
\begin{align*}
& \sum_{1 \leq k+2 j \leq s}\left((1+t)^{2\sigma_2 }\norm{\partial_{x'}^{k} \partial_{t}^{j} (a,\mathbf{v})(t)}_{2}^{2}+\int_{0}^{t}  (1+\tau)^{2\sigma_2 }\norm{\partial_{x'}^{k} \partial_{\tau}^{j} \left(\frac{D a}{D\tau},\partial_x\mathbf{v}\right)}_{2}^{2}  \mathrm{d} \tau \right)\\
\lesssim {}& \norm{( a _0,\mathbf{v}_0)}_{H^s}^2+\widetilde{N}_{\sigma_2 }^{s}(t)^{2}+\sigma_2 \bigg(\delta \widetilde{D}_{\sigma_2 }^{s}(t)^{2}+D_{0}^{s}(t)^{2}+\int_{0}^{t}  (1+\tau)^{2\sigma_2 -1} \Big(\norm{\partial_{x'}({a}, \mathbf{v})}_{2}^{2}+\norm{\partial_{\tau}({a}, \mathbf{v})}_{2}^{2}\Big) \mathrm{d} \tau\bigg),\\
& \sum_{1 \leq k+2j \leq s-1}\left((1+t)^{2\sigma_2 }\norm{\partial_{x}\partial_{x'}^{k} \partial_{t}^{j} \mathbf{v}(t)}_{2}^{2}+\int_{0}^{t} (1+\tau)^{2\sigma_2 } \norm{\partial_{x'}^{k} \partial_{\tau}^{j+1} (a, \mathbf{v})}_{2}^{2}  \mathrm{d} \tau\right) \\
\lesssim {}& \norm{( a _0,\mathbf{v}_0)}_{H^s}^2+\widetilde{N}_{\sigma_2 }^{s}(t)^{2}+\sigma_2 \bigg(\delta \widetilde{D}_{\sigma_2 }^{s}(t)^{2}+D_{0}^{s}(t)^{2}+\int_{0}^{t}  (1+\tau)^{2\sigma_2 -1} \Big(\norm{\partial_{x'}({a}, \mathbf{v})}_{2}^{2}+\norm{\partial_{\tau}({a}, \mathbf{v})}_{2}^{2}\Big) \mathrm{d} \tau\bigg),\\
& \sum_{1 \leq k+\ell+2j \leq s-1}\left((1+t)^{2\sigma_2 }\norm{\partial_{x'}^{k} \partial_{x_n}^{\ell+1} \partial_{t}^{j}  a (t)}_{2}^{2}+\int_{0}^{t} (1+\tau)^{2\sigma_2 } \norm{\partial_{x'}^{k} \partial_{x_n}^{\ell+1} \partial_{\tau}^{j} \left(a, \frac{D a }{D\tau}\right)}_{2}^{2}  \mathrm{d} \tau\right) \\
\lesssim {}& \norm{( a _0,\mathbf{v}_0)}_{H^s}^2+\widetilde{N}_{\sigma_2 }^{s}(t)^{2}+\sigma_2 \bigg(\delta \widetilde{D}_{\sigma_2 }^{s}(t)^{2}+D_{0}^{s}(t)^{2}+\int_{0}^{t}  (1+\tau)^{2\sigma_2 -1} \Big(\norm{\partial_{x'}({a}, \mathbf{v})}_{2}^{2}+\norm{\partial_{\tau}({a}, \mathbf{v})}_{2}^{2}\Big) \mathrm{d} \tau\bigg),\\
 &\sum_{1 \leq k+\ell+2j \leq s-2}(1+t)^{2\sigma_2 }\norm{\partial_{x'}^{k} \partial_{x_n}^{\ell+2} \partial_{t}^{j} \mathbf{v}(t)}_{2}^{2}\\
\lesssim {}& \norm{( a _0,\mathbf{v}_0)}_{H^s}^2+\widetilde{N}_{\sigma_2 }^{s}(t)^{2}+\sigma_2 \bigg(\delta \widetilde{D}_{\sigma_2 }^{s}(t)^{2}+D_{0}^{s}(t)^{2}+\int_{0}^{t}  (1+\tau)^{2\sigma_2 -1} \Big(\norm{\partial_{x'}({a}, \mathbf{v})}_{2}^{2}+\norm{\partial_{\tau}({a}, \mathbf{v})}_{2}^{2}\Big) \mathrm{d} \tau\bigg),\\
&\sum_{1 \leq k+\ell+2j \leq s-1}\int_{0}^{t}(1+\tau)^{2\sigma_2 }\left(\norm{\partial_{x}^{\ell+1} \partial_{x'}^{k} \partial_{\tau}^{j}  a }_{2}^{2}+\norm{\partial_{x}^{\ell+2} \partial_{x'}^{k} \partial_{\tau}^{j} \mathbf{v}}_{2}^{2}\right)  \mathrm{d} \tau\\
\lesssim {}& \norm{( a _0,\mathbf{v}_0)}_{H^s}^2+\widetilde{N}_{\sigma_2 }^{s}(t)^{2}+\sigma_2 \bigg(\delta \widetilde{D}_{\sigma_2 }^{s}(t)^{2}+D_{0}^{s}(t)^{2}+\int_{0}^{t}  (1+\tau)^{2\sigma_2 -1} \Big(\norm{\partial_{x'}({a}, \mathbf{v})}_{2}^{2}+\norm{\partial_{\tau}({a}, \mathbf{v})}_{2}^{2}\Big) \mathrm{d} \tau\bigg),\\
& \sum_{2 \leq k+2j \leq s-1}\left((1+t)^{2\sigma_2 }\norm{\partial_{x}^{k} \partial_{t}^{j+1}  a (t)}_{2}^{2}+\int_{0}^{t} (1+\tau)^{2\sigma_2 } \norm{\partial_{x}^{k} \partial_{\tau}^{j+1}  a }_{2}^{2}  \mathrm{d} \tau\right) \\
\lesssim {}& \norm{( a _0,\mathbf{v}_0)}_{H^s}^2+\widetilde{N}_{\sigma_2 }^{s}(t)^{2}+\sigma_2 \bigg(\delta \widetilde{D}_{\sigma_2 }^{s}(t)^{2}+D_{0}^{s}(t)^{2}+\int_{0}^{t}  (1+\tau)^{2\sigma_2 -1} \Big(\norm{\partial_{x'}({a}, \mathbf{v})}_{2}^{2}+\norm{\partial_{\tau}({a}, \mathbf{v})}_{2}^{2}\Big) \mathrm{d} \tau\bigg).
\end{align*}
Summing all these intermediate estimates yields inequality \eqref{3.7}.

By an analogous argument for the magnetic field, we obtain the corresponding estimates:
\begin{align*}
& \sum_{1 \leq k+2 j \leq s}\left((1+t)^{2\sigma_4 }\norm{\partial_{x'}^{k} \partial_{t}^{j} \mathbf{B}(t)}_{2}^{2}+\int_{0}^{t} (1+\tau)^{2\sigma_4 } \norm{\partial_{x}\partial_{x'}^{k} \partial_{\tau}^{j}\mathbf{B}}_{2}^{2}  \mathrm{d} \tau\right) \\
\lesssim{}& \norm{\mathbf{B}_0}_{H^s}^2+\widetilde{H}_{\sigma_4 }^{s}(t)^{2}+\sigma_4 \bigg(\delta \widetilde{G}_{\sigma_4 }^{s}(t)^{2}+D_{0}^{s}(t)^{2}+\int_{0}^{t}  (1+\tau)^{2\sigma_4 -1} \left(\norm{\partial_{x'}\mathbf{B}}_{2}^{2}+\norm{\partial_{\tau} \mathbf{B}}_{2}^{2}\right) \mathrm{d} \tau\bigg),\\
& \sum_{1 \leq k+2 j \leq s-1}\left((1+t)^{2\sigma_4 } \norm{\partial_{x}\partial_{x'}^{k} \partial_{t}^{j} \mathbf{B}(t)}_{2}^{2}+\int_{0}^{t}(1+\tau)^{2\sigma_4 }  \norm{\partial_{x'}^{k} \partial_{\tau}^{j+1} \mathbf{B}}_{2}^{2}  \mathrm{d} \tau \right) \\
\lesssim{}& \norm{\mathbf{B}_0}_{H^s}^2+\widetilde{H}_{\sigma_4 }^{s}(t)^{2}+\sigma_4 \bigg(\delta \widetilde{G}_{\sigma_4 }^{s}(t)^{2}+D_{0}^{s}(t)^{2}+\int_{0}^{t}  (1+\tau)^{2\sigma_4 -1} \left(\norm{\partial_{x'}\mathbf{B}}_{2}^{2}+\norm{\partial_{\tau} \mathbf{B}}_{2}^{2}\right) \mathrm{d} \tau\bigg),\\
 &\sum_{1 \leq k+\ell+2j \leq s-2}(1+t)^{2\sigma_4 }\norm{\partial_{x'}^{k} \partial_{x_n}^{\ell+2} \partial_{t}^{j} \mathbf{B}(t)}_{2}^{2}\\
\lesssim{}& \norm{\mathbf{B}_0}_{H^s}^2+\widetilde{H}_{\sigma_4 }^{s}(t)^{2}+\sigma_4 \bigg(\delta \widetilde{G}_{\sigma_4 }^{s}(t)^{2}+D_{0}^{s}(t)^{2}+\int_{0}^{t}  (1+\tau)^{2\sigma_4 -1} \left(\norm{\partial_{x'}\mathbf{B}}_{2}^{2}+\norm{\partial_{\tau} \mathbf{B}}_{2}^{2}\right) \mathrm{d} \tau\bigg),\\
&\sum_{1 \leq k+\ell+2j \leq s-1}\int_{0}^{t}(1+\tau)^{2\sigma_4 }\norm{\partial_{x'}^{k} \partial_{x_n}^{\ell+2} \partial_{t}^{j} \mathbf{B}}_{2}^{2}\mathrm{d} \tau\\
\lesssim{}& \norm{\mathbf{B}_0}_{H^s}^2+\widetilde{H}_{\sigma_4 }^{s}(t)^{2}+\sigma_4 \bigg(\delta \widetilde{G}_{\sigma_4 }^{s}(t)^{2}+D_{0}^{s}(t)^{2}+\int_{0}^{t}  (1+\tau)^{2\sigma_4 -1} \left(\norm{\partial_{x'}\mathbf{B}}_{2}^{2}+\norm{\partial_{\tau} \mathbf{B}}_{2}^{2}\right) \mathrm{d} \tau\bigg).
\end{align*}
Summing these estimates together gives \eqref{3.8}, which completes the proof.
\end{proof}
To complete the energy estimate, it remains to derive bounds for the nonlinear terms \(\widetilde{N}_{\sigma_2}^{s}(t)\) and \(\widetilde{H}_{\sigma_4}^{s}(t)\), which are provided in the proposition below.

\begin{proposition}\label{P3.11}
Let the regularity index satisfy \(s \geq s_0 + 2\), and suppose that \(E_0^s(t) \leq 1\) for every \(t \geq 0\). Then the inequalities below hold for all \(t \geq 0\):
  \begin{equation}\label{3.9}
    \widetilde{N}_{\sigma_2}^{s}(t)^2\lesssim\delta\widetilde{D}_{\sigma_2 }^{s}(t)^2+E_{\sigma_1}^{s}(t)^4+ \Big(\mathscr{M}_{\infty}(t)+\mathscr{M}_{1}(t)\Big)D_{\sigma_1}^{s}(t)^2,
  \end{equation}
\begin{equation}\label{3.10}
    \widetilde{H}_{\sigma_4}^{s}(t)^2\lesssim\delta\widetilde{G}_{\sigma_4 }^{s}(t)^2+E_{\sigma_1}^{s}(t)^4+\Big(E_{\sigma_1}^{s}(t)^2+ \mathscr{M}_{\infty}(t)+\mathscr{M}_{1}(t)\Big)D_{\sigma_1}^{s}(t)^2
  \end{equation}
for any \(\delta>0\).
\end{proposition}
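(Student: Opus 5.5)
The plan is to bound every term in \(\widetilde{N}_{\sigma_2}^{s}(t)^2\) and \(\widetilde{H}_{\sigma_4}^{s}(t)^2\) using the product, commutator and Sobolev-type inequalities of Lemmas \ref{L4.1}--\ref{L4.3}, as in the proof of Proposition \ref{P2.3} and the corresponding part of \cite{Kagei-Kobayashi2005}. The guiding principle is weight splitting. Each nonlinear term is at least quadratic. The weight \((1+\tau)^{2\sigma_2}\) (resp. \((1+\tau)^{2\sigma_4}\)) is distributed so that one factor, the one carrying the highest derivatives, receives \((1+\tau)^{2\sigma_1}\) and is controlled by \(E_{\sigma_1}^{s}\) or \(D_{\sigma_1}^{s}\). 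The remaining decay \(2(\sigma_2-\sigma_1)=2q(1)-2\varepsilon_1\) (resp. \(2(\sigma_4-\sigma_1)=1-2\varepsilon_1\)) is absorbed by a low-order factor measured in \(L^\infty\) or in a weighted low-order norm. Since \(\sigma_2\le \frac n2\) and \(\sigma_2\le\sigma_0+\sigma_1\), etc., the weights appearing in \(\mathscr{M}_\infty\) and \(\mathscr{M}_1\) always suffice.

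For \eqref{3.9}, I would treat the terms in order.

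\textbf{Pointwise-in-time terms.} These are \(\partial_x^2(a\operatorname{div}\mathbf v)\), \(\partial_x^2(\mathbf v\cdot\nabla a)\), \(\partial_x f_2\), and the \(\partial_t^j\) terms. Each is a product in which every factor is at least first order. Each is bounded by the product of two \(E_{\sigma_1}^{s}\)-controlled norms, giving \((1+t)^{2\sigma_2}\cdot(1+t)^{-4\sigma_1}E_{\sigma_1}^{s}(t)^4\le E_{\sigma_1}^{s}(t)^4\). This uses \(2\sigma_2\le 4\sigma_1\), which holds for \(\varepsilon_1\) small.

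\textbf{Time-integrated terms.} For \(\int(1+\tau)^{2\sigma_2}\|\partial_x(a\operatorname{div}\mathbf v)\|_{H^{s-1}}^2\) and the analogous terms, I would apply Lemma \ref{L4.1} and split each product. The top-order factor, such as \(\partial_x^{s}\operatorname{div}\mathbf v\) or \(\partial_x^{s} a\), goes into \(D_{\sigma_1}^{s}\). The companion factor is \(a\), \(\mathbf v\) or a low derivative in \(L^\infty\). This gives a bound of the form \(\sup_\tau(1+\tau)^{2(\sigma_2-\sigma_1)}\|(a,\mathbf v,\partial_x a,\partial_x\mathbf v)\|_\infty^2\, D_{\sigma_1}^{s}(t)^2\). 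The \(L^\infty\) factor is estimated by \(\mathscr M_\infty\) when it is undifferentiated. When it carries derivatives, I would use Gagliardo--Nirenberg between \(\partial_x\mathbf u\) (\(M_{\sigma_0}^{1}\)) and the higher norms in \(\widetilde{M}_{\sigma_0}^{3}\) and \(\widetilde{E}_{\sigma_2}^{s}\), which are parts of \(\mathscr M_1\). Since \(\mathscr M_\infty,\mathscr M_1\lesssim\eta_1\le1\), the square is bounded by the first power, which yields the factor \((\mathscr M_\infty+\mathscr M_1)D_{\sigma_1}^{s}(t)^2\).

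\textbf{Duality terms.} For \(A_{k,j}\), \(B_{k,j}\) and \(C_{k,\ell,j}\) with \(k+2j\ge1\), I would integrate by parts tangentially and in time, exactly as in \cite{Kagei-Kobayashi2005}. This is possible because at least one tangential or time derivative is present and \(\mathbf v|_{x_n=0}=0\). The derivative loss in \(\mathbf v\cdot\nabla a\) is handled via the commutator structure and \(\operatorname{div}\mathbf v\). Any remaining term carrying the top-order \(\widetilde D\)-quantity at the \(\sigma_2\) weight is handled with Young's inequality, producing \(\delta\widetilde D_{\sigma_2}^{s}(t)^2\). The rest is bounded as in the integrated-term step.

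For \eqref{3.10}, the same procedure applies to \(f_3=-\mathbf v\cdot\nabla\mathbf B-\mathbf B\operatorname{div}\mathbf v+\mathbf B\cdot\nabla\mathbf v\) and to \(D_{k,j}\), \(E_{k,j}\). Here the weight gap is \(\sigma_4-\sigma_1=\frac12-\varepsilon_1\), larger than for the fluid. When \(\mathbf B\) is the low-order \(L^\infty\) factor, \(B_{\infty,\frac n2}\) or \(B^{1}_{\sigma_3}\) and \(\widetilde B^{2}_{\sigma_5}\) suffice. The harder case is the term \(\mathbf B\cdot\nabla\mathbf v\) with \(\mathbf v\) at top order, or \(\mathbf v\cdot\nabla\mathbf B\) with \(\mathbf v\) as the low factor: \(\|\mathbf v\|_\infty\) decays only like \((1+t)^{-n/2}\), which is still enough. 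Where no factor decays sufficiently in \(L^\infty\), I would instead pair two \(D_{\sigma_1}^s\)-type factors and one \(E_{\sigma_1}^s\) factor. This accounts for the extra \(E_{\sigma_1}^{s}(t)^2D_{\sigma_1}^{s}(t)^2\) term in \eqref{3.10}.

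The main obstacle I expect is the boundary and regularity bookkeeping in the \(C_{k,\ell,j}\) and top-order \(A_{k,j}\) terms, where a normal derivative of \(a\) appears at the highest order with only the \(\sigma_2\) weight available. I would follow the Kagei--Kobayashi device of rewriting \(\partial_{x_n}a\) through the momentum equation and using \(\frac{Da}{Dt}\), so that the top order lands in \(\widetilde D_{\sigma_2}^{s}\) with a small coefficient.
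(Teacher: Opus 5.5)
Your plan follows the same route as the paper. The paper's proof of Proposition \ref{P3.11} simply defers to the Kagei--Kobayashi nonlinear estimates built on Lemmas \ref{L4.1}--\ref{L4.3}. Those estimates use weight splitting between \(\sigma_1\)-weighted top-order factors and decaying low-order factors, tangential integration by parts at top order, the transport identity \((\mathbf v\cdot\nabla g,g)=-\frac12(\operatorname{div}\mathbf v,g^2)\) based on \(\mathbf v|_{x_n=0}=\mathbf 0\), and Young's inequality to produce the \(\delta\)-terms.

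\textbf{The step to repair.} You pass from \((\mathscr{M}_\infty+\mathscr{M}_1)^2\) to the first power by invoking \(\mathscr{M}_1\lesssim\eta_1\). That bound is exactly what Proposition \ref{P3.13} derives from this proposition, so the argument is circular as written, and the statement itself only assumes \(E_0^s(t)\le 1\).

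\textbf{The fix for \eqref{3.9}.} In the squared time-integrated terms, write \(\norm{f}_\infty^2=\norm{f}_\infty\norm{f}_\infty\) and bound one copy by Sobolev and \(E_0^s(t)\le 1\). This suffices because the extra decay required, \(2(\sigma_2-\sigma_1)=2q(1)-2\varepsilon_1\), never exceeds the decay of a single factor:
\begin{itemize}
\item \(\frac n2\) for \(a,\mathbf v,\mathbf B\), via \(\mathscr{M}_\infty\);
\item at least about \(\sigma_2\) for their derivatives, via \(\mathscr{M}_1\).
\end{itemize}
The trilinear pairings \(A_{k,j},\dots,E_{k,j}\) give the first power directly.

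\textbf{The fix for \eqref{3.10} when \(n=2\).} Here the repair above is not enough. The factor \(\operatorname{div}\mathbf v\) in \(\partial_x^{s-1}\mathbf B\,\operatorname{div}\mathbf v\) decays only like \((1+t)^{-\frac78+\varepsilon_1}\) through \(\mathscr{M}_1\). That falls short of the required \(2(\sigma_4-\sigma_1)=1-2\varepsilon_1\). You must instead bound both copies by \((1+t)^{-\sigma_1}E_{\sigma_1}^{s}(t)\), using \(2\sigma_1=1-2\varepsilon_1\). This is the actual source of the \(E_{\sigma_1}^{s}(t)^2D_{\sigma_1}^{s}(t)^2\) term. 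Make sure it arises this way, from the squared terms. A genuinely trilinear pairing of one energy factor with two dissipative factors would give \(E_{\sigma_1}^{s}(t)D_{\sigma_1}^{s}(t)^2\), which the right-hand side of \eqref{3.10} does not control.
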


\begin{proof}
Following the argument in \cite{Kagei-Kobayashi2005}, these bounds follow from an application of Lemmas \ref{L4.1}–\ref{L4.3}; we therefore omit the full details of the derivation.
\end{proof}

We now turn to the weighted supremum quantities \(\widetilde{M}_{\sigma_0}^3(t)\) and \(\widetilde{B}_{\sigma_5}^2(t)\), for which we state the following result.

\begin{proposition}\label{P3.12}
Let the regularity index satisfy \(s \geq s_0 + 2\), and suppose that \(E_0^s(t) \leq 1\) for every \(t \geq 0\). Then the inequalities below hold for all \(t \geq 0\):
  \begin{equation}\label{3.11}
\widetilde{M}_{\sigma_{0}}^{3}(t) \lesssim \norm{\mathbf{u}_{0}}_{1}+\norm{\mathbf{u}_{0}}_{H^{3} \times H^{2}}+E_{\sigma_{1}}^{s}(t)^{2}+D_{0}^{s}(t)^{2}+\mathscr{M}_{0}(t)^2+\mathscr{M}_{\infty}(t)^2+\mathscr{M}_{1}(t)^2,
\end{equation}
  \begin{equation}\label{3.12}
\widetilde{B}_{\sigma_{5}}^{2}(t) \lesssim \norm{\mathbf{B}_{0}}_{1}+\mathscr{M}_{0}(t)^2+\mathscr{M}_{\infty}(t)^2+\mathscr{M}_{1}(t)^2.
\end{equation}
\end{proposition}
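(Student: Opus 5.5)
We will prove both bounds in Proposition \ref{P3.12} with the same Duhamel decomposition used in Subsection 3.2. We write $\mathbf{u}=I_0+I_1+I_2$ and $\mathbf{B}=II_0+II_1+II_2$ and bound each piece in the norms appearing in $\widetilde{M}^3_{\sigma_0}(t)$ and $\widetilde{B}^2_{\sigma_5}(t)$. As before, for $t\le 1$ both estimates follow from Theorem \ref{T1.1}, so we take $t\ge 1$ throughout.

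\textbf{The bound \eqref{3.11}.}

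The linear part $I_0$ is handled directly.
\begin{itemize}
\item For $\partial_x^2 I_0$, item (i) of Proposition \ref{P3.1} with $\ell=2$ gives the rate $t^{-\frac n4-q(2)}$. Since $q(2)\ge q(1)$ (for $n=2$, $q(2)=\frac58$), this is at least $t^{-\sigma_0}$, with exponential remainder $e^{-ct}\norm{\mathbf{u}_0}_{H^2\times H^1}$.
\item For $\partial_t I_0$ and $\partial_t\partial_x I_0$, we use the linear equations \eqref{1.7}. They express $\partial_t a=-\operatorname{div}\mathbf{m}$ and $\partial_t\mathbf{m}$ through $\partial_x^2\mathbf{m}$ and $\partial_x a$. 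These reduce to first, second and third derivatives of $\mathscr{U}(t)\mathbf{u}_0$, all of which decay at least like $t^{-\sigma_0}$ by Proposition \ref{P3.1}.
\item For $\partial_{x'}\partial_x^2\mathbf{m}$, the tangential derivative commutes with $\mathscr{U}(t)$. We then apply Proposition \ref{P3.1} with $\ell=3$, and the Sobolev remainder requires $\norm{\mathbf{u}_0}_{H^3\times H^2}$.
\end{itemize}

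The short-time part $I_1$ is handled next.
\begin{itemize}
\item For $\partial_x^2 I_1$ we cannot afford two full derivatives on the kernel near $\tau=t$. Instead we use the equations to rewrite $\partial_{x_n}^2\mathbf{m}$ in terms of $\partial_t\mathbf{m}$, tangential derivatives, $\nabla a$ and the forcing, as in (iv) of Proposition \ref{P2.1}.
\item The remaining terms are controlled through Proposition \ref{P3.3}(i)--(ii) with $k=1$ applied to $\partial_{x'}\operatorname{div}(\mathscr N_1+\mathscr N_2)$, integrable in $t-\tau$.
\item For the time derivatives, note that $\partial_t I_1$ equals the forcing at time $t$ plus $\int_{t-1}^t \mathscr{U}(t-\tau)\partial_\tau\operatorname{div}\mathscr N\,d\tau$ plus the boundary term $\mathscr{U}(1)\operatorname{div}\mathscr{N}(t-1)$.
\item The needed $L^2$ and $H^1$ bounds on $\partial_x^2\mathscr N_i$, $\partial_t\partial_x\mathscr N_i$ decaying like $(1+t)^{-\sigma_0}$ follow from Lemma \ref{L4.1}. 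Each factor is a quadratic product in which one factor is bounded in $L^\infty$ by $\mathscr M_\infty$ and the other in $L^2$ by $\mathscr M_1$ or $E^s_{\sigma_1}$.
\end{itemize}

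The long-time part $I_2$ is handled by moving derivatives onto the semigroup.
\begin{itemize}
\item For $\tau\in[0,t-1]$ we put all derivatives on $\mathscr{U}(t-\tau)$ and use Proposition \ref{P3.1} with $\ell=2,3$ against $\norm{\operatorname{div}\mathscr N_i}_1$ (Propositions \ref{P3.5}, \ref{P3.8}).
\item This is combined with Lemma \ref{L4.4} and the exponential part against the $H^k$ norms of $\mathscr N_i$, split at $\tau=t/2$ exactly as was done for $\partial_x I_2$.
\item The factor $\norm{\partial_x^3\mathbf u}_2$ appearing in (ii) of Proposition \ref{P3.5} is absorbed by $D_0^s(t)^2$ through the same H\"older split used there.
\end{itemize}

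\textbf{The bound \eqref{3.12}.}

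This is simpler because $\mathscr{B}$ is an explicit heat semigroup.
\begin{itemize}
\item Since $\partial_t\mathbf B=\Delta\mathbf B+\operatorname{div}\mathscr N_3$ and $\partial_{x_n}^2\mathbf B=\partial_t\mathbf B-\Delta'\mathbf B-\operatorname{div}\mathscr N_3$, it suffices to bound $\partial_t\mathbf B$, $\partial_{x'}^2\mathbf B$ and $\operatorname{div}\mathscr N_3$ in $L^2$ at rate $(1+t)^{-\frac n4-1}$.
\item For $II_0$, Proposition \ref{P3.4}(i) with $k+2j=2$, $q=1$, $p=2$ gives exactly $t^{-\frac n4-1}\norm{\mathbf B_0}_1$.
\item For $II_2$, on $[0,t/2]$ we use Proposition \ref{P3.4}(ii) with $\mathscr N_3|_{x_n=0}=0$. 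This places all derivatives on the kernel and gives $(t-\tau)^{-\frac n4-\frac32}\norm{\mathscr N_3}_1$ with $\norm{\mathscr N_3}_1\lesssim(1+\tau)^{-\frac n2}$.
\item On $[t/2,t-1]$ we keep one derivative on $\operatorname{div}\mathscr N_3$ and use the improved $L^1$ decay from Proposition \ref{P3.6}(iii), namely $(1+\tau)^{-\frac n4-\sigma_0}$, together with a second-derivative version from Lemma \ref{L4.1} controlled by $\mathscr M_1$.
\end{itemize}

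\textbf{Main obstacle.}

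The hardest step is the short-time piece $II_1$, and likewise $\partial_x^2 I_1$. Using Proposition \ref{P3.4}(i) alone would cost $(t-\tau)^{-1}$, which is not integrable. We will therefore place one derivative on $\mathscr N_3$ and one on the kernel, giving the integrable weight $(t-\tau)^{-1/2}$. This requires $\norm{\partial_x\operatorname{div}\mathscr N_3}_2\lesssim(1+t)^{-\frac n4-1}(\mathscr M_\infty^2+\mathscr M_1^2)$. We expect to get this by estimating $\partial_x^2(\mathbf m\otimes\mathbf B/(1+a))$ with $L^\infty$ bounds on $\mathbf m,\mathbf B$ (rate $\frac n2$) paired with $\partial_x^2$ of the other factor (rate $\sigma_2$ or $\sigma_4$), and mixed first-derivative products via $L^4$ interpolation. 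We anticipate that the exponent bookkeeping $\frac n2+\sigma_4\ge\frac n4+1$ works for $n\ge2$ with $\varepsilon_1$ small.

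For the tangential component we use tangential commutation, as in Proposition \ref{P3.4}(iii). This avoids boundary terms when moving the derivative from $\mathscr N_3$ onto the kernel.
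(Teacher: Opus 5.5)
Your overall plan (Duhamel splitting into $I_0+I_1+I_2$ and $II_0+II_1+II_2$, with normal second derivatives of $\mathbf{m}$ and $\mathbf{B}$ recovered from the equations) matches the paper's. However, two steps do not go through as written.

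\textbf{The normal derivative $\partial_{x_n}^2 a$ is never estimated.} It is part of $\norm{\partial_x^2\mathbf{u}}_2$ in $\widetilde{M}^3_{\sigma_0}$. Proposition \ref{P3.1} covers it for $I_0$ and $I_2$. For $I_1$, however, you only rewrite $\partial_{x_n}^2\mathbf{m}$ through the momentum equation and then invoke Proposition \ref{P3.3}. That proposition controls tangential derivatives plus at most one normal derivative. It says nothing about $\partial_{x_n}^2$ of the density component, which the semigroup does not smooth, and $\operatorname{div}(\mathscr{N}_1+\mathscr{N}_2)$ does not vanish on $x_n=0$. The paper closes this with the damped-transport argument behind Proposition \ref{P2.1}(iii). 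Insert $\partial_{x_n}\operatorname{div}\mathbf{m}=-\partial_t\partial_{x_n}a$ into the normal component of the momentum equation and differentiate once more in $x_n$. This gives $\partial_t\partial_{x_n}^2a+a_0\partial_{x_n}^2a$ equal to terms in $\partial_{x'}\partial_x^2\mathbf{m}$, $\partial_{x_n}\partial_t\mathbf{m}$ and $\partial_{x_n}\operatorname{div}(\mathcal{N}_1+\mathcal{N}_2)$, with $a_0=1/(\alpha+\beta)$. Then $\int_1^t e^{-a_0(t-\tau)}(1+\tau)^{-\sigma_0}\,d\tau\lesssim(1+t)^{-\sigma_0}$ finishes the bound. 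This is why $\norm{\partial_t\mathbf{u}}_{H^1}$ and $\norm{\partial_{x'}\partial_x^2\mathbf{m}}_2$ are built into $\widetilde{M}^3_{\sigma_0}$ and must be estimated first. The order is: time derivatives; then $\partial_{x'}^k\partial_x\mathbf{u}$ ($k=1,2$) by Duhamel and Proposition \ref{P3.3}(ii); then $\partial_{x'}^k\partial_{x_n}^2\mathbf{m}$ ($k=0,1$) from the equation; and only then $\partial_{x_n}^2a$.

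\textbf{For $n=2$ your bound on $II_2$ near $\tau=t$ is too weak.} On $[t/2,t-1]$ you obtain $\int_{t/2}^{t-1}(t-\tau)^{-\frac n4-1}(1+\tau)^{-\frac n4-\sigma_0}\,d\tau\lesssim t^{-\frac n4-\sigma_0}$. For $n=2$ this is $t^{-11/8}$, not the required $t^{-3/2}$. Shifting another derivative onto $\mathscr{N}_3$ does not help. The natural bound for $\norm{\partial_x^2\mathscr{N}_3}_1$ is limited by terms like $\norm{\partial_x^2\mathbf{m}}_2\norm{\mathbf{B}}_2\lesssim(1+\tau)^{-11/8}$, and the kernel factor $(t-\tau)^{-1}$ then adds a logarithm. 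For $n\ge 3$ your exponents do suffice. For $n=2$ the paper uses $\mathscr{N}_3|_{x_n=0}=\mathbf{0}$ and Proposition \ref{P3.4}(ii) with $q=p=2$ to put all derivatives on the kernel. The integrand becomes $(t-\tau)^{-3/2}\norm{\mathscr{N}_3(\tau)}_2$, and $\norm{\mathscr{N}_3}_2\lesssim\norm{\mathbf{m}}_\infty\norm{\mathbf{B}}_2\lesssim(1+\tau)^{-3/2}$ by \eqref{3.18}. Lemma \ref{L4.4} then yields $(1+t)^{-3/2}$ for both $\partial_tII_2$ and $\partial_{x'}^2II_2$.

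A minor slip: in fact $\partial_tI_1=\int_{t-1}^t\mathscr{U}(t-\tau)\partial_\tau\operatorname{div}(\mathscr{N}_1+\mathscr{N}_2)\,d\tau$ exactly. Adding the forcing at time $t$ and $\mathscr{U}(1)\operatorname{div}\mathscr{N}(t-1)$ to this double counts. It is harmless for the estimate, since those terms decay fast enough.
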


\begin{proof}
We may assume without loss of generality that \(t \geq 1\) throughout the proof.

By the same reasoning as in the proof of the \(L^2\) estimates in Theorem 3.1 of \cite{Kagei-Kobayashi2005}, we have
\[
\norm{\partial_{x}^{\ell} \partial_{t} I_{0}(t)}_{2} \lesssim t^{-\frac{n}{4}-\frac{1}{2}}\norm{\mathbf{u}_{0}}_{1}+e^{-ct}\norm{\mathbf{u}_0}_{H^{1} \times L^{2}}
\]
for \(\ell = 0, 1\).

To bound the time derivatives \(\partial_t I_1(t)\) and \(\partial_t I_2(t)\), we first observe that
\begin{align*}
  \partial_{t} I_{1}(t)={}&\mathscr{U}(0)\operatorname{div} \begin{pmatrix} 0 \\ \mathscr{N}_1+\mathscr{N}_2 \end{pmatrix}(t)-\mathscr{U}(1) \operatorname{div} \begin{pmatrix} 0 \\ \mathscr{N}_1+\mathscr{N}_2 \end{pmatrix}(t-1)\\
  &+\int_{t-1}^{t} \partial_{t}\mathscr{U}(t-\tau)  \operatorname{div} \begin{pmatrix} 0 \\ \mathscr{N}_1+\mathscr{N}_2 \end{pmatrix}(\tau) \mathrm{d} \tau\\
  ={}&\int_{t-1}^{t} \mathscr{U}(t-\tau) \partial_{\tau} \operatorname{div} \begin{pmatrix} 0 \\ \mathscr{N}_1+\mathscr{N}_2 \end{pmatrix}(\tau) \mathrm{d} \tau,\\
\partial_{t} I_{2}(t) ={}& \mathscr{U}(1) \operatorname{div} \begin{pmatrix} 0 \\ \mathscr{N}_1+\mathscr{N}_2 \end{pmatrix}(t-1) +\int_{0}^{t-1} \partial_{t} \mathscr{U}(t-\tau) \operatorname{div} \begin{pmatrix} 0 \\ \mathscr{N}_1+\mathscr{N}_2 \end{pmatrix}(\tau) \mathrm{d} \tau.
\end{align*}

Since \(s \geq s_0 + 2\), proceeding as in the proof of Proposition \ref{P3.8}, we deduce that
\begin{equation}\label{3.13}
  \norm{\partial_{t} \partial_{x} \mathscr{N}_1(t)}_{2}+\norm{\partial_{t} \partial_{x} \mathscr{N}_2(t)}_{2} \lesssim(1+t)^{-\sigma_{0}}\left(E_{\sigma_{1}}^{s}(t)^{2}+\mathscr{M}_{\infty}(t)^2+\mathscr{M}_{1}(t)^2\right)
\end{equation}
for all \(t \geq 0\), provided that \(E_0^s(t) \leq 1\) uniformly in \(t \geq 0\).

For \(\ell = 0, 1\), combining items (i) and (ii) of Proposition \ref{P3.3} with estimate \eqref{3.13}, we obtain
\begin{align*}
\norm{\partial_{x}^{\ell} \partial_{t} I_{1}(t)}_{2} \lesssim{}& \int_{t-1}^{t}(t-\tau)^{-\frac{\ell}{2}}\left(\norm{\partial_{\tau} \operatorname{div} \mathscr{N}_1(\tau)}_{2}+\norm{\partial_{\tau} \operatorname{div} \mathscr{N}_2(\tau)}_{2} \right)\mathrm{d} \tau \\
\lesssim{}& \left(E_{\sigma_{1}}^{s}(t)^{2}+\mathscr{M}_{\infty}(t)^2+\mathscr{M}_{1}(t)^2\right)\int_{t-1}^{t}(t-\tau)^{-\frac{\ell}{2}}(1+\tau)^{-\sigma_{0}} \mathrm{d} \tau \\
\lesssim{}&(1+t)^{-\sigma_{0}}\left(E_{\sigma_{1}}^{s}(t)^{2}+\mathscr{M}_{\infty}(t)^2+\mathscr{M}_{1}(t)^2\right).
\end{align*}

Invoking items (i) and (ii) of Proposition \ref{P3.3}, item (vi) of Proposition \ref{P3.5}, Proposition \ref{P3.8} and Lemma \ref{L4.4}, we arrive at
\begin{align*}
\norm{\partial_{x}^{\ell} \partial_{t} I_{2}(t)}_{2} \lesssim{}&\norm{ \operatorname{div} \mathscr{N}_1(t-1)}_{2}+\norm{ \operatorname{div} \mathscr{N}_2(t-1)}_{2} \\
& + \int_{0}^{t-1}(t-\tau)^{-\frac{n}{4}-\frac{1}{2}}\left(\norm{ \operatorname{div} \mathscr{N}_1(\tau)}_{1}+\norm{ \operatorname{div} \mathscr{N}_2(\tau)}_{1}\right) \mathrm{d} \tau \\
& + \int_{0}^{t-1} e^{-c(t-\tau)}\left(\norm{ \operatorname{div} \mathscr{N}_1(\tau)}_{2}+\norm{ \operatorname{div} \mathscr{N}_2(\tau)}_{2}\right) \mathrm{d} \tau \\
\lesssim{}&t^{-\frac{n}{2}} \Big(  E_{\sigma_{1}}^{s}(t-1)^{2} + M_{\sigma_{0}}^{1}(t-1)^{2}+M_{\infty, \frac{n}{2}}(t-1)^{2} \\
&+ B_{\sigma_{3}}^{1}(t-1)^{2} + B_{\infty, \frac{n}{2}}(t-1)^{2} \Big) \\
& + \left(E_{\sigma_{1}}^{s}(t)^{2}+ \mathscr{M}_{0}(t)^{2} \right)\int_{0}^{t-1}(t-\tau)^{-\frac{n}{4}-\frac{1}{2}}(1+\tau)^{-\frac{n}{2}-\frac{1}{2}\left(q(1)-\varepsilon_{1}\right)}  \mathrm{d} \tau \\
& +\left(B_{\frac{n}{4}}^{0}(t)^{2}+B_{\sigma_{3}}^{1}(t)^{2}\right)\int_{0}^{t-1}(t-\tau)^{-\frac{n}{4}-\frac{1}{2}}(1+\tau)^{-\frac{n}{4}-\sigma_{3}}\mathrm{d} \tau \\
& + \left( E_{\sigma_{1}}^{s}(t)^{2} + M_{\sigma_{0}}^{1}(t)^{2}+ M_{\infty, \frac{n}{2}}(t)^{2} + B_{\sigma_{3}}^{1}(t)^{2}  +B_{\infty, \frac{n}{2}}(t)^{2}\right)\\
&\cdot\int_{0}^{t-1} e^{-c(t-\tau)}(1+\tau)^{-\frac{n}{2}}  \mathrm{d} \tau \\
\lesssim{}&(1+t)^{-\sigma_{0}}\left(E_{\sigma_{1}}^{s}(t)^{2}+ \mathscr{M}_{0}(t)^2+\mathscr{M}_{\infty}(t)^2 \right).
\end{align*}

From the foregoing estimates, we deduce that
\[
 \norm{\partial_{t} \mathbf{u}(t)}_{H^{1}}
 \lesssim(1+t)^{-\sigma_{0}}\left(\norm{\mathbf{u}_{0}}_{1}+\norm{\mathbf{u}_0}_{H^{1} \times L^{2}}+E_{\sigma_{1}}^{s}(t)^{2}+ \mathscr{M}_{0}(t)^2+\mathscr{M}_{\infty}(t)^2+ \mathscr{M}_{1}(t)^2 \right).
\]

We next estimate \(\norm{\partial_x^2 \mathbf{u}(t)}_2\) and \(\norm{\partial_{x'} \partial_x^2 \mathbf{m}(t)}_2\). For \(k = 1, 2\), it follows from Proposition \ref{P3.1} that
\[
\norm{\partial_{x'}^{k} \partial_{x} I_{0}(t)}_{2} \lesssim t^{-\sigma_{0}}\norm{\mathbf{u}_{0}}_{1}+e^{-ct}\norm{\mathbf{u}_{0}}_{H^{3} \times H^{2}}.
\]

Since \(s \geq s_0 + 2\), repeating the argument from the proof of Proposition \ref{P3.8}, we obtain
\begin{equation}\label{3.14}
  \norm{ \partial_{x} \mathscr{N}_1(\tau)}_{H^{2}}+\norm{ \partial_{x} \mathscr{N}_2(\tau)}_{H^{2}}
 \lesssim(1+t)^{-\frac{n}{2}}\left(E_{\sigma_{1}}^{s}(t)^{2}+\mathscr{M}_{\infty}(t)^2+\mathscr{M}_{1}(t)^2\right)
\end{equation}
for all \(t \geq 0\), provided that \(E_0^s(t) \leq 1\) holds uniformly in \(t \geq 0\).

For \(k = 1, 2\), item (ii) of Proposition \ref{P3.3} combined with \eqref{3.14} yields
\begin{align*}
\norm{\partial_{x'}^{k} \partial_{x} I_{1}(t)}_{2} \lesssim{}& \int_{t-1}^{t}(t-\tau)^{-\frac{1}{2}}\left(\norm{ \operatorname{div} \mathscr{N}_1(\tau)}_{H^{2}}+\norm{ \operatorname{div} \mathscr{N}_2(\tau)}_{H^{2}}\right) \mathrm{d} \tau \\
\lesssim{}&\left(E_{\sigma_{1}}^{s}(t)^{2}+\mathscr{M}_{\infty}(t)^2+\mathscr{M}_{1}(t)^2\right) \int_{t-1}^{t}(t-\tau)^{-\frac{1}{2}}(1+\tau)^{-\frac{n}{2}} \mathrm{d} \tau \\
\lesssim{}&(1+t)^{-\frac{n}{2}}\left(E_{\sigma_{1}}^{s}(t)^{2}+\mathscr{M}_{\infty}(t)^2+\mathscr{M}_{1}(t)^2\right).
\end{align*}

For \(k = 1, 2\), combining Proposition \ref{P3.1}, item (vi) of Proposition \ref{P3.5}, item (ii) of Proposition \ref{P3.8}, estimate \eqref{3.14} and Lemma \ref{L4.4}, we obtain
\begin{align*}
&\norm{\partial_{x'}^{k} \partial_{x} I_{2}(t)}_{2} \\
 \lesssim{}& \int_{0}^{t-1}(t-\tau)^{-\frac{n}{4}-q(1)}\left(\norm{ \operatorname{div} \mathscr{N}_1(\tau)}_{1}+\norm{ \operatorname{div} \mathscr{N}_2(\tau)}_{1}\right) \mathrm{d} \tau\\
&+ \int_{0}^{t-1} e^{-c(t-\tau)}\left(\norm{ \operatorname{div} \mathscr{N}_1(\tau)}_{H^{2}}+\norm{ \operatorname{div} \mathscr{N}_2(\tau)}_{H^{2}}\right) \mathrm{d} \tau \\
 \lesssim{}&\left(E_{\sigma_{1}}^{s}(t)^{2}+ \mathscr{M}_{0}(t)^2\right)\int_{0}^{t-1}(t-\tau)^{-\frac{n}{4}-q(1)}(1+\tau)^{-\frac{n}{2}-\frac{1}{2}\left(q(1)-\varepsilon_{1}\right)} \mathrm{d} \tau \\
&+\left(B_{\frac{n}{4}}^{0}(t)^{2}+B_{\sigma_{3}}^{1}(t)^{2}\right)\int_{0}^{t-1}(t-\tau)^{-\frac{n}{4}-q(1)}(1+\tau)^{-\frac{n}{4}-\sigma_3} \mathrm{d} \tau \\
&+\left(E_{\sigma_{1}}^{s}(t)^{2}+\mathscr{M}_{\infty}(t)^2+\mathscr{M}_{1}(t)^2\right) \int_{0}^{t-1} e^{-c(t-\tau)}(1+\tau)^{-\frac{n}{2}} \mathrm{d} \tau \\
 \lesssim{}&(1+t)^{-\sigma_{0}}\Big(E_{\sigma_{1}}^{s}(t)^{2}+\mathscr{M}_{0}(t)^2+\mathscr{M}_{\infty}(t)^2+\mathscr{M}_{1}(t)^2\Big),
\end{align*}
and consequently,
\begin{align*}
\norm{\partial_{x'}^{k} \partial_{x} \mathbf{u}(t)}_{2} \lesssim{}&(1+t)^{-\sigma_{0}}\Big(\norm{\mathbf{u}_{0}}_{1}+\norm{\mathbf{u}_{0}}_{H^{3} \times H^{2}}+E_{\sigma_{1}}^{s}(t)^{2}+\mathscr{M}_{0}(t)^2+\mathscr{M}_{\infty}(t)^2+\mathscr{M}_{1}(t)^2\Big)
\end{align*}
for \(k = 1, 2\).

We now turn to the normal second-order derivative \(\partial_{x'}^k \partial_{x_n}^2 \mathbf{m}(t)\) for \(k = 0, 1\).
Proceeding analogously to the argument in \cite{Kagei-Kobayashi2005} and combining the technique from item (iv) of Proposition \ref{P2.1} with estimate \eqref{3.14}, we derive
\begin{align*}
\norm{ \partial_{x'}^{k} \partial_{x_{n}}^{2} \mathbf{m}(t)}_{2} \lesssim{} & \norm{\partial_{x'}^{k} \partial_{t} \mathbf{m}(t)}_{2}+\norm{\partial_{x'}^{k+1} \partial_{x} \mathbf{m}(t)}_{2} +\norm{\partial_{x'}^{k} \partial_{x} {a}(t)}_{2}\\
&+\norm{\partial_{x'}^{k} \operatorname{div} \mathcal{N}_1(t)}_{2}+\norm{\partial_{x'}^{k} \operatorname{div} \mathcal{N}_2(t)}_{2} \\
\lesssim{}&(1+t)^{-\sigma_{0}}\Big(\norm{\mathbf{u}_{0}}_{1}+\norm{\mathbf{u}_{0}}_{H^{3} \times H^{2}}+E_{\sigma_{1}}^{s}(t)^{2}+D_{0}^{s}(t)^{2}\\
&+\mathscr{M}_{0}(t)^2+\mathscr{M}_{\infty}(t)^2+\mathscr{M}_{1}(t)^2\Big)
\end{align*}
for \(k = 0, 1\).

It remains to bound the normal second-order derivative \(\partial_{x_n}^2 a(t)\).
Following the same strategy as in \cite{Kagei-Kobayashi2005} and combining the argument from item (iii) of Proposition \ref{P2.1} with \eqref{3.14} and Lemma \ref{L4.4}, we deduce that
\begin{align*}
\norm{\partial_{x_{n}}^{2} {a}(t)}_{2} \lesssim{}& e^{-a_{0}(t-1)}\norm{\partial_{x_{n}}^{2} {a}(1)}_{2} + \int_{1}^{t} e^{-a_{0}(t-\tau)}\Big(\norm{\partial_{x'} \partial_{x}^{2} \mathbf{m}}_{2}+\norm{\partial_{x_{n}}\partial_{\tau} \mathbf{m}}_{2}\\
& +\norm{ \partial_{x_{n}}\operatorname{div} \mathcal{N}_1}_{2}+\norm{ \partial_{x_{n}}\operatorname{div} \mathcal{N}_2}_{2}\Big) \mathrm{d} \tau \\
\lesssim{}& E_{0}^{s}(1) e^{-a_{0}(t-1)}+ \Big(\norm{\mathbf{u}_{0}}_{1}+\norm{\mathbf{u}_{0}}_{H^{3} \times H^{2}}+E_{\sigma_{1}}^{s}(t)^{2}+D_{0}^{s}(t)^{2}\\
&+\mathscr{M}_{0}(t)^2+\mathscr{M}_{\infty}(t)^2+\mathscr{M}_{1}(t)^2\Big)
\int_{1}^{t} e^{-a_{0}(t-\tau)}(1+\tau)^{-\sigma_{0}} \mathrm{d} \tau \\
\lesssim{}&(1+t)^{-\sigma_{0}}\Big(\norm{\mathbf{u}_{0}}_{1}+\norm{\mathbf{u}_{0}}_{H^{3} \times H^{2}}+E_{\sigma_{1}}^{s}(t)^{2}+D_{0}^{s}(t)^{2}\\
&+\mathscr{M}_{0}(t)^2+\mathscr{M}_{\infty}(t)^2+\mathscr{M}_{1}(t)^2\Big)
\end{align*}
for \(k = 0, 1\), where \(a_{0}=\frac{1}{\alpha+\beta}\).

Collecting all the foregoing estimates yields the desired bound \eqref{3.11} for \(\widetilde{M}_{\sigma_0}^3(t)\).

We now treat the magnetic field quantities. Invoking item (i) of Proposition \ref{P3.4}, we arrive at
\[
\norm{\partial_{t} II_{0}(t)}_{2} \lesssim t^{-\frac{n}{4}-{1}}\norm{\mathbf{B}_{0}}_{1}.
\]

To bound the time derivatives \(\partial_t II_1(t)\) and \(\partial_t II_2(t)\), we observe that
\begin{align*}
  \partial_{t} II_{1}(t)
  ={}&\int_{t-1}^{t} \mathscr{B}(t-\tau) \partial_{\tau} \operatorname{div} \mathscr{N}_3 (\tau) \mathrm{d} \tau,\\
\partial_{t} II_{2}(t) ={}& \mathscr{B}(1) \operatorname{div} \mathscr{N}_3 (t-1) +\int_{0}^{t-1} \partial_{t} \mathscr{B}(t-\tau) \operatorname{div} \mathscr{N}_3 (\tau) \mathrm{d} \tau.
\end{align*}

Since \(s \geq s_0 + 2\), proceeding as in the proof of Proposition \ref{P3.6}, we obtain
\begin{equation}\label{3.15}
  \norm{\partial_{t}  \mathscr{N}_3(t)}_{2} \lesssim(1+t)^{-\frac{n}{4}-1}\left(\mathscr{M}_{\infty}(t)^2+\mathscr{M}_{1}(t)^2\right),
\end{equation}
\begin{equation}\label{3.16}
  \norm{ \partial_{x}\mathscr{N}_3(t)}_{2} \lesssim(1+t)^{-\frac{3n}{4}} \left(\mathscr{M}_{0}(t)^2 +\mathscr{M}_{\infty}(t)^2 \right),
\end{equation}
\begin{equation}\label{3.17}
  \norm{\partial_{x}^2  \mathscr{N}_3(t)}_{2} \lesssim(1+t)^{-\frac{n}{4}-1}\left(\mathscr{M}_{\infty}(t)^2+\mathscr{M}_{1}(t)^2\right)
\end{equation}
for all \(t \geq 0\), provided that \(E_0^s(t) \leq 1\) for all \(t \geq 0\).

Exploiting the boundary condition \(\mathscr{N}_3\big|_{x_n=0} = \mathbf{0}\) together with item (ii) of Proposition \ref{P3.4} and estimate \eqref{3.15}, we deduce that
\begin{align*}
\norm{\partial_{t} II_{1}(t)}_{2} \lesssim{}& \int_{t-1}^{t}(t-\tau)^{-\frac{1}{2}}\norm{\partial_{\tau} \mathscr{N}_3(\tau)}_{2}\mathrm{d} \tau \\
\lesssim{}&\left(\mathscr{M}_{\infty}(t)^2+\mathscr{M}_{1}(t)^2\right) \int_{t-1}^{t}(t-\tau)^{-\frac{1}{2}}(1+\tau)^{-\frac{n}{4}-1} \mathrm{d} \tau \\
\lesssim{}&(1+t)^{-\frac{n}{4}-1}\left(\mathscr{M}_{\infty}(t)^2+\mathscr{M}_{1}(t)^2\right).
\end{align*}

For \(n \geq 3\), combining item (i) of Proposition \ref{P3.4}, item (iii) of Proposition \ref{P3.6}, estimate \eqref{3.16} and Lemma \ref{L4.4}, we obtain
\begin{align*}
\norm{\partial_{t} II_{2}(t)}_{2} \lesssim{}&\norm{ \operatorname{div} \mathscr{N}_3(t-1)}_{2} + \int_{0}^{t-1}(t-\tau)^{-\frac{n}{4}-1}\norm{ \operatorname{div}\mathscr{N}_3(\tau)}_{1}\mathrm{d} \tau \\
\lesssim{}&t^{-\frac{3n}{4}} \left(\mathscr{M}_{0}(t-1)^2 +\mathscr{M}_{\infty}(t-1)^2 \right)\\
& + \mathscr{M}_{0}(t)^2 \int_{0}^{t-1}(t-\tau)^{-\frac{n}{4}-1}(1+\tau)^{-\frac{n}{2}-q(1)}  \mathrm{d} \tau \\
\lesssim{}&(1+t)^{-\frac{n}{4}-1}\left(\mathscr{M}_{0}(t)^2+\mathscr{M}_{\infty}(t)^2 \right).
\end{align*}

In the two-dimensional case \(n = 2\), we similarly obtain
\begin{equation}\label{3.18}
  \norm{\mathscr{N}_3(t)}_{2}\lesssim (1+t)^{-\frac{3}{2}}\left(\mathscr{M}_{0}(t)^2+\mathscr{M}_{\infty}(t)^2 \right)
\end{equation}
for all \(t \geq 0\), provided that \(E_0^s(t) \leq 1\) for all \(t \geq 0\).

For \(n = 2\), exploiting the boundary condition \(\mathscr{N}_3\big|_{x_n=0} = \mathbf{0}\) together with item (ii) of Proposition \ref{P3.4}, estimates \eqref{3.16}, \eqref{3.18} and Lemma \ref{L4.4}, we arrive at
\begin{align*}
\norm{\partial_{t} II_{2}(t)}_{2} \lesssim{}&\norm{ \operatorname{div} \mathscr{N}_3(t-1)}_{2} + \int_{0}^{t-1}(t-\tau)^{-\frac{3}{2}}\norm{\mathscr{N}_3(\tau)}_{2}\mathrm{d} \tau \\
\lesssim{}&t^{-\frac{3}{2}} \left(\mathscr{M}_{0}(t-1)^2 +\mathscr{M}_{\infty}(t-1)^2 \right)\\
& + \left(\mathscr{M}_{0}(t)^2 +\mathscr{M}_{\infty}(t)^2 \right)\int_{0}^{t-1}(t-\tau)^{-\frac{3}{2}}(1+\tau)^{-\frac{3}{2}}  \mathrm{d} \tau \\
\lesssim{}&(1+t)^{-\frac{3}{2}}\left(\mathscr{M}_{0}(t)^2+\mathscr{M}_{\infty}(t)^2 \right).
\end{align*}

From the above estimates, we conclude that
\[
 \norm{\partial_{t} \mathbf{B}(t)}_{2}
 \lesssim(1+t)^{-\frac{n}{4}-1}\left(\norm{\mathbf{B}_{0}}_{1}+ \mathscr{M}_{0}(t)^2+\mathscr{M}_{\infty}(t)^2 +\mathscr{M}_{1}(t)^2\right).
\]

To bound the normal second-order derivative \(\partial_{x_n}^2 \mathbf{B}(t)\), we first treat the tangential component \(\partial_{x'}^2 \mathbf{B}(t)\).
Invoking item (i) of Proposition \ref{P3.4}, we obtain
\[
\norm{\partial_{x'}^2 II_{0}(t)}_{2} \lesssim t^{-\frac{n}{4}-{1}}\norm{\mathbf{B}_{0}}_{1}.
\]

Combining item (iii) of Proposition \ref{P3.4} with estimate \eqref{3.17}, we deduce that
\begin{align*}
\norm{\partial_{x'}^2 II_{1}(t)}_{2} \lesssim{}& \int_{t-1}^{t}(t-\tau)^{-\frac{1}{2}}\norm{\partial_x^2 \mathscr{N}_3(\tau)}_{2}\mathrm{d} \tau \\
\lesssim{}&\left(\mathscr{M}_{\infty}(t)^2+\mathscr{M}_{1}(t)^2\right) \int_{t-1}^{t}(t-\tau)^{-\frac{1}{2}}(1+\tau)^{-\frac{n}{4}-1} \mathrm{d} \tau \\
\lesssim{}&(1+t)^{-\frac{n}{4}-1}\left(\mathscr{M}_{\infty}(t)^2+\mathscr{M}_{1}(t)^2\right).
\end{align*}

For \(n \geq 3\), an application of item (i) of Proposition \ref{P3.4}, item (iii) of Proposition \ref{P3.6} and Lemma \ref{L4.4} yields
\begin{align*}
\norm{\partial_{x'}^2 II_{2}(t)}_{2} \lesssim{}&\int_{0}^{t-1}(t-\tau)^{-\frac{n}{4}-1}\norm{ \operatorname{div}\mathscr{N}_3(\tau)}_{1}\mathrm{d} \tau \\
\lesssim{}&\mathscr{M}_{0}(t)^2 \int_{0}^{t-1}(t-\tau)^{-\frac{n}{4}-1}(1+\tau)^{-\frac{n}{2}-q(1)}  \mathrm{d} \tau \\
\lesssim{}&(1+t)^{-\frac{n}{4}-1}\mathscr{M}_{0}(t)^2.
\end{align*}

For \(n = 2\), exploiting the boundary condition \(\mathscr{N}_3\big|_{x_n=0} = \mathbf{0}\) together with item (ii) of Proposition \ref{P3.4}, estimate \eqref{3.18} and Lemma \ref{L4.4}, we obtain
\begin{align*}
\norm{\partial_{x'}^2 II_{2}(t)}_{2} \lesssim{}& \int_{0}^{t-1}(t-\tau)^{-\frac{3}{2}}\norm{\mathscr{N}_3(\tau)}_{2}\mathrm{d} \tau \\
\lesssim{}&\left(\mathscr{M}_{0}(t)^2 +\mathscr{M}_{\infty}(t)^2 \right)\int_{0}^{t-1}(t-\tau)^{-\frac{3}{2}}(1+\tau)^{-\frac{3}{2}}  \mathrm{d} \tau \\
\lesssim{}&(1+t)^{-\frac{3}{2}}\left(\mathscr{M}_{0}(t)^2+\mathscr{M}_{\infty}(t)^2 \right).
\end{align*}

Collecting these bounds yields
\[
 \norm{\partial_{x'}^2 \mathbf{B}(t)}_{2}
 \lesssim(1+t)^{-\frac{n}{4}-1}\left(\norm{\mathbf{B}_{0}}_{1}+ \mathscr{M}_{0}(t)^2+\mathscr{M}_{\infty}(t)^2 +\mathscr{M}_{1}(t)^2\right).
\]

Finally, using the magnetic field equation and estimate \eqref{3.16}, we deduce that
\begin{align*}
 \norm{\partial_{x_n}^2 \mathbf{B}(t)}_{2}\lesssim{}&\norm{\partial_{t} \mathbf{B}(t)}_{2}+\norm{\partial_{x'}^2 \mathbf{B}(t)}_{2}+\norm{ \partial_{x}\mathscr{N}_3(t)}_{2}\\
 \lesssim{}&(1+t)^{-\frac{n}{4}-1}\left(\norm{\mathbf{B}_{0}}_{1}+ \mathscr{M}_{0}(t)^2+\mathscr{M}_{\infty}(t)^2 +\mathscr{M}_{1}(t)^2\right).
\end{align*}

Putting all these estimates together yields the desired bound \eqref{3.12} for \(\widetilde{B}_{\sigma_5}^2(t)\).
\end{proof}

Combining estimate \eqref{3.5} with inequalities \eqref{3.7}–\eqref{3.12}, we arrive at the proposition below, which directly yields the improved decay estimates for higher-order derivatives.

\begin{proposition}\label{P3.13}
Let the regularity index satisfy \(s \geq s_0 + 2\), and suppose that \(E_0^s(t) \leq 1\) for every \(t \geq 0\). If \(\norm{(\mathbf{u}_0,\mathbf{B}_0)}_{H^s} + \norm{(\mathbf{u}_0,\mathbf{B}_0)}_1 \leq \eta_1\), then the inequality below holds for all \(t \geq 0\):
\begin{equation}\label{3.19}
\mathscr{M}_{1}(t)+\widetilde{D}_{\sigma_2 }^{s}(t)+\widetilde{G}_{\sigma_4 }^{s}(t) \lesssim \eta_1.
\end{equation}
\end{proposition}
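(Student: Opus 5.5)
The plan is to close a bootstrap inequality for the combined quantity
\[
Y(t)\triangleq \mathscr{M}_{1}(t)+\widetilde{D}_{\sigma_2}^{s}(t)+\widetilde{G}_{\sigma_4}^{s}(t),
\]
using the already established bounds $E_0^s+D_0^s\lesssim\eta_1$ (Theorem \ref{T1.1}), $\mathscr{M}_0(t)\lesssim\eta_1$ (from \eqref{3.5}), $E_{\sigma_1}^s+D_{\sigma_1}^s\lesssim\eta_1$ (Proposition \ref{P3.7}) and $\mathscr{M}_\infty(t)\lesssim\eta_1$ (Subsection \ref{ss:3.3.2}). The components $M_{\sigma_0}^1$ and $B_{\sigma_3}^1$ of $\mathscr{M}_1$ are already controlled by $\mathscr{M}_0\lesssim\eta_1$. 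What remains are $\widetilde{M}_{\sigma_0}^3$, $\widetilde{B}_{\sigma_5}^2$, and the two weighted energy pairs $(\widetilde E_{\sigma_2}^s,\widetilde D_{\sigma_2}^s)$ and $(\widetilde F_{\sigma_4}^s,\widetilde G_{\sigma_4}^s)$.

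First I substitute \eqref{3.9} into \eqref{3.7} and \eqref{3.10} into \eqref{3.8}, and absorb $\delta\widetilde D_{\sigma_2}^s$, $\delta\widetilde G_{\sigma_4}^s$ into the left by choosing $\delta$ small. The key step is then the linear-in-time remainder $\sigma_2\int_0^t(1+\tau)^{2\sigma_2-1}\bigl(\norm{\partial_{x'}(a,\mathbf v)}_2^2+\norm{\partial_\tau(a,\mathbf v)}_2^2\bigr)\,\mathrm d\tau$, together with its magnetic analogue.

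For this remainder I use the pointwise decay already encoded in $\mathscr{M}_0$ and $\widetilde M_{\sigma_0}^3$. The passage from $\mathbf m$ to $\mathbf v$ costs only quadratic terms, bounded via $\norm{a}_\infty\lesssim\eta_1$ and $\mathscr{M}_\infty$. This gives $\norm{\partial_{x'}\mathbf u}_2+\norm{\partial_\tau\mathbf u}_2\lesssim(1+\tau)^{-\sigma_0}(\eta_1+\mathscr{M}_1)$. Since $2\sigma_2-1-2\sigma_0=-1-4\varepsilon_1<-1$, the integral converges and is bounded by $(\eta_1+\mathscr{M}_1(t))^2$.

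For $\mathbf B$ the analogous argument uses $\norm{\partial_{x'}\mathbf B}_2\le(1+\tau)^{-\sigma_3}B_{\sigma_3}^1$ and $\norm{\partial_\tau\mathbf B}_2\le(1+\tau)^{-\sigma_5}\widetilde B_{\sigma_5}^2$. Here $2\sigma_4-1-2\sigma_3=-1-4\varepsilon_1$, so the integral again converges. The term $D_0^s(t)^2$ is $\lesssim\eta_1^2$. This yields
\[
\widetilde E_{\sigma_2}^{s}+\widetilde D_{\sigma_2}^{s}+\widetilde F_{\sigma_4}^{s}+\widetilde G_{\sigma_4}^{s}\lesssim \eta_1+\eta_1^{2}+\eta_1^{1/2}\,\mathscr{M}_1(t)^{1/2}\,\eta_1+\mathscr{M}_1(t),
\]
where the $\mathscr{M}_1$ contribution comes only from the remainder integrals.

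Care is needed here, because a bare $\mathscr{M}_1$ term on the right cannot be absorbed. I therefore refine the remainder step: the $\partial_{x'}$ and $\partial_\tau$ pieces are controlled through $\widetilde M^3_{\sigma_0}$ and $\widetilde B^2_{\sigma_5}$. By \eqref{3.11} and \eqref{3.12}, these are bounded by $\eta_1+E_{\sigma_1}^{s}(t)^2+D_0^s(t)^2+\mathscr{M}_0^2+\mathscr{M}_\infty^2+\mathscr{M}_1^2\lesssim\eta_1+\mathscr{M}_1(t)^2$, with no linear $\mathscr{M}_1$ term. Inserting this gives
\[
Y(t)\lesssim \eta_1+Y(t)^2+\bigl(\eta_1+Y(t)\bigr)\eta_1^{2},
\]
where the last term comes from the $(\mathscr{M}_\infty+\mathscr{M}_1)D_{\sigma_1}^s{}^2$ parts of \eqref{3.9}–\eqref{3.10}.

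I expect this ordering — feeding \eqref{3.11}–\eqref{3.12} into the time-weighted remainder so that only quadratic $\mathscr{M}_1$ terms survive — to be the main obstacle. The remaining inequality is closed by a standard continuity argument. The function $Y(t)$ is continuous in $t$, and $Y(0)\lesssim\norm{(\mathbf u_0,\mathbf B_0)}_{H^s}\lesssim\eta_1$. Hence for $\eta_1$ small, $Y(t)\le C\eta_1$ for all $t\ge0$, which is \eqref{3.19}.
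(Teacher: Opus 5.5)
Your proposal is correct and is essentially the paper's argument. The paper obtains \eqref{3.19} simply by combining \eqref{3.5} with \eqref{3.7}--\eqref{3.12}, implicitly also using \eqref{3.6} and $\mathscr{M}_{\infty}(t)\lesssim\eta_1$; your treatment of the time-weighted remainder through $\mathscr{M}_0$, $\widetilde{M}_{\sigma_0}^{3}$ and $\widetilde{B}_{\sigma_5}^{2}$, followed by \eqref{3.11}--\eqref{3.12} so that $\mathscr{M}_1$ enters only quadratically, is exactly the detail the paper leaves implicit. One bookkeeping slip: after taking square roots, the $(\mathscr{M}_{\infty}+\mathscr{M}_{1})D_{\sigma_1}^{s}(t)^{2}$ contribution gives $\eta_1(\eta_1+Y(t))^{1/2}$ rather than $(\eta_1+Y(t))\eta_1^{2}$, but this is still $\lesssim \eta_1+\eta_1 Y(t)$, so the continuity argument closes unchanged.
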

\subsubsection{Proof of Theorem \ref{T1.3} $(iii)$}\label{ss:3.3.4}
We next establish the estimate stated in item (iii) of Theorem \ref{T1.3}.
To derive refined decay rates for \(\|I_1\|_\infty\) and \(\|I_2\|_\infty\), we first collect the corresponding estimates for the nonlinearities \(\mathscr{N}_1\) and \(\mathscr{N}_2\), which are summarized in the proposition below.

\begin{proposition}\label{P3.14}
Let the regularity index satisfy \(s \geq s_0 + 2\), and suppose that \(E_0^s(t) \leq 1\) and \(a(t) \geq -\frac{1}{2}\) for every \(t \geq 0\). Then the inequalities below hold for all \(t \geq 0\):
\begin{enumerate}
\item[(i)]
\[
\norm{\mathscr{N}_{1}(t)}_{H^{\left[\frac{n}{2}\right]+1}} \lesssim (1+t)^{-\frac{n}{2}-\frac{1}{2}} \left( \mathscr{M}_{0}(t)^{2}+ \mathscr{M}_{\infty}(t)^{2}  + \mathscr{M}_{1}(t)^{2}\right).
\]
\item[(ii)]
\[
\norm{\partial_x \mathscr{N}_{1}(t)}_{1} \lesssim (1+t)^{-\frac{n}{2}-q(1)} \left( \mathscr{M}_{0}(t)^{2} + \mathscr{M}_{1}(t)^{2} \right).
\]
\item[(iii)]
\[
\norm{\mathscr{N}_{2}(t)}_{H^{\left[\frac{n}{2}\right]+1}} \lesssim (1+t)^{-\frac{n}{2}-\frac{1}{2}} \left( \mathscr{M}_{0}(t)^{2}+ \mathscr{M}_{\infty}(t)^{2}  + \mathscr{M}_{1}(t)^{2}\right).
\]
\end{enumerate}
\end{proposition}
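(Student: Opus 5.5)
We estimate each entry of \(\mathscr{N}_1\) and \(\mathscr{N}_2\) by splitting it into products and applying the Moser-type and Gagliardo--Nirenberg inequalities of Lemma \ref{L4.1} (and Lemmas \ref{L4.2}--\ref{L4.3} for the composite coefficients \(\frac{1}{a+1}\), \(\int_0^1(1-\theta)P''(a\theta+1)\,\mathrm{d}\theta\)). Throughout, \(a\ge-\frac12\) and \(E_0^s(t)\le1\) make these coefficients smooth and bounded with all derivatives up to order \(s\) controlled. In every product one factor is placed in \(L^\infty\), using \(\mathscr{M}_\infty\), which gives decay \((1+t)^{-\frac n2}\). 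The other factor is placed in \(L^2\) with at least one derivative, using \(\mathscr{M}_0\) (through \(M^1_{\sigma_0}\), \(B^1_{\sigma_3}\)) or \(\mathscr{M}_1\) (through \(\widetilde{M}^3_{\sigma_0}\), \(\widetilde E^s_{\sigma_2}\), \(\widetilde F^s_{\sigma_4}\), \(\widetilde B^2_{\sigma_5}\)). The required total decay is \(\frac n2+\frac12\). Since \(\sigma_0\ge\frac n4+\frac38\) and \(\sigma_2=\sigma_0-2\varepsilon_1\), it is enough that each product carries one \(L^\infty\)-decaying factor and one factor whose derivative decays at the rate \(\sigma_0\) or \(\sigma_2\).

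\textbf{(i) and (iii).} For \(\mathscr{N}_2=\mathbf{B}\otimes\mathbf{B}-\frac12|\mathbf{B}|^2I\), the \(L^2\) part is \(\|\mathbf{B}\|_\infty\|\mathbf{B}\|_2\lesssim(1+t)^{-\frac n2-\frac n4}\). For \(\partial_x^k\) with \(1\le k\le[\frac n2]+1\), write \(\partial_x^k(B_jB_k)\) via Leibniz and bound it by \(\|\mathbf{B}\|_\infty\|\partial_x^k\mathbf{B}\|_2\) plus intermediate terms. The top term uses \(\|\partial_x\mathbf{B}\|_{H^{s-1}}\), which by \(\widetilde F^s_{\sigma_4}\) and \(B^1_{\sigma_3}\) decays like \((1+t)^{-\sigma_4}\), and \(\sigma_4\ge\frac12\). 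Intermediate terms are bounded by \(\|\partial_x^{i}\mathbf{B}\|_{L^{p}}\|\partial_x^{k-i}\mathbf{B}\|_{L^q}\) and interpolated between \(\|\mathbf{B}\|_\infty\) and \(\|\partial_x^k\mathbf{B}\|_2\). This gives \((1+t)^{-\frac n2-\sigma_4}\lesssim(1+t)^{-\frac n2-\frac12}\) in the worst case, after absorbing the \(\varepsilon_1\) loss, since \(\frac n4\ge\frac12\).

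\(\mathscr{N}_1\) has the same quadratic structure: \(a^2\) and \(\frac{m_jm_k}{a+1}\). The exception is \(\alpha\partial_k(\frac{am_j}{a+1})\) and its \(\beta\)-analogue, which contain one more derivative. At the top order \([\frac n2]+2\) these produce \(\|a\|_\infty\|\partial_x^{[\frac n2]+2}\mathbf{m}\|_2+\|\mathbf{m}\|_\infty\|\partial_x^{[\frac n2]+2}a\|_2\). Since \(s\ge s_0+2\), both norms lie inside \(\widetilde E^s_{\sigma_2}\) and \(\widetilde M^3_{\sigma_0}\), the latter for the low orders \(\partial_x^2\mathbf{u}\). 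Hence the product decays like \((1+t)^{-\frac n2-\sigma_2}\), which suffices.

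\textbf{(ii).} Use Hölder's inequality \(\|fg\|_1\le\|f\|_2\|g\|_2\) on \(\partial_x\) of each product. Terms such as \(a\,\partial_x a\) give \(M^0_{\frac n4}M^1_{\sigma_0}(1+t)^{-\frac n4-\sigma_0}=(1+t)^{-\frac n2-q(1)}\). The delicate terms are the second-derivative ones, \(a\,\partial_x^2\mathbf{m}\) and \(\partial_xa\,\partial_x\mathbf{m}\). We bound them by \(\|a\|_2\|\partial_x^2\mathbf{m}\|_2+\|\partial_xa\|_2\|\partial_x\mathbf{m}\|_2\), using \(\widetilde M^3_{\sigma_0}\) for \(\partial_x^2\mathbf{u}\); this is exactly where \(\mathscr{M}_1\) enters and gives the full rate \(\frac n4+\sigma_0\). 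The coefficient derivatives \(\partial_x\frac{1}{a+1}\) are handled with Lemma \ref{L4.2}.

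The main obstacle is the highest-order derivative in (i), namely \(\partial_x^{[\frac n2]+2}\) of the viscous term in \(\mathscr{N}_1\). It requires \(\partial_x^{[\frac n2]+2}(a,\mathbf{m})\) to decay at rate about \(\frac12\) in \(L^2\), together with sharp \(L^\infty\) decay of the companion factor. I would first try to put all intermediate factors in \(L^\infty\) via Sobolev embedding, controlled by \(\widetilde E^s_{\sigma_2}\) (which has the highest norms), at the cost of the small loss \(2\varepsilon_1\). This loss is absorbed because \(\frac n4+q(1)-2\varepsilon_1\ge\frac12\).
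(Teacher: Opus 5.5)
Your proposal is correct and is essentially the argument the paper intends when it defers to Kagei--Kobayashi and Lemmas \ref{L4.1}--\ref{L4.3}. For (i) and (iii) you use Moser/Gagliardo--Nirenberg product and composition bounds, with one factor in $L^\infty$ via $\mathscr{M}_\infty$ and the differentiated factor in $L^2$ via $M^1_{\sigma_0}$, $\widetilde M^3_{\sigma_0}$, $\widetilde E^s_{\sigma_2}$, $B^1_{\sigma_3}$, $\widetilde F^s_{\sigma_4}$ and $\widetilde B^2_{\sigma_5}$; for (ii) you use Cauchy--Schwarz with $M^0_{\frac n4}$ paired against $M^1_{\sigma_0}$ or $\widetilde M^3_{\sigma_0}$.

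Only small bookkeeping points remain. The $\partial_{x_n}^2\mathbf{B}$ part of $\|\partial_x^2\mathbf{B}\|_2$ is controlled by $\widetilde B^2_{\sigma_5}$, not by $\widetilde F^s_{\sigma_4}$. In (ii) you should also list $\mathbf{m}\,\partial_x^2 a$ and cubic terms such as $\frac{m_jm_k}{(a+1)^2}\partial_x a$; these are bounded using $\|\mathbf{m}\|_\infty\lesssim E_0^s(t)\le 1$, since $\mathscr{M}_\infty$ does not appear on the right-hand side of (ii).
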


\begin{proof}
Following the argument in \cite{Kagei-Kobayashi2005}, these estimates follow from an application of Lemmas \ref{L4.1}–\ref{L4.3} presented in the Appendix \ref{sec:4}; we therefore omit the full details.
\end{proof}

Let \(s \geq s_0 + 2\). Combining item (iii) of Proposition \ref{P3.3} with items (i) and (iii) of Proposition \ref{P3.14}, we deduce that
\begin{align*}
\norm{I_{1}(t)}_{\infty}  \lesssim {}& \int_{t-1}^{t} (t-\tau)^{-(1-\tilde{\varepsilon})}\left(\norm{\operatorname{div} \mathscr{N}_{1}(t)}_{H^{\left[\frac{n}{2}\right]}}+\norm{\operatorname{div} \mathscr{N}_{2}(t)}_{H^{\left[\frac{n}{2}\right]}}\right) \mathrm{d}\tau \\
\lesssim {}& \left( \mathscr{M}_{0}(t)^{2}+ \mathscr{M}_{\infty}(t)^{2}  + \mathscr{M}_{1}(t)^{2}\right)\int_{t-1}^{t} (t-\tau)^{-(1-\tilde{\varepsilon})} (1+\tau)^{-\frac{n}{2}-\frac{1}{2}} \mathrm{d}\tau \\
 \lesssim{}& (1+t)^{-\frac{n}{2}-\frac{1}{2}} \left( \mathscr{M}_{0}(t)^{2}+ \mathscr{M}_{\infty}(t)^{2}  + \mathscr{M}_{1}(t)^{2}\right).
\end{align*}

Invoking item (i) of Proposition \ref{P3.1}, Proposition \ref{P3.2}, items (iv), (vi) and (viii) of Proposition \ref{P3.5}, Proposition \ref{P3.14} and Lemma \ref{L4.4}, we arrive at
\begin{align*}
\norm{I_{2}(t)}_{\infty} \lesssim{}&  \int_{0}^{\frac{t-1}{2}} (t-\tau)^{-\frac{n}{2}-\frac{1}{2}} \left(\norm{\mathscr{N}_{1}(\tau)}_{1}+\norm{\mathscr{N}_{2}(\tau)}_{1}\right) \mathrm{d}\tau \\
&+ \int_{\frac{t-1}{2}}^{t-1} (t-\tau)^{-\frac{n}{2}} \left(\norm{\operatorname{div} \mathscr{N}_{1}(\tau)}_{1}+\norm{\operatorname{div} \mathscr{N}_{2}(\tau)}_{1}\right) \mathrm{d}\tau  \\
& + \int_{0}^{t-1} e^{-c(t-\tau)} \left(\norm{\mathscr{N}_{1}(\tau)}_{H^{\left[\frac{n}{2}\right]+1}}+\norm{\mathscr{N}_{2}(\tau)}_{H^{\left[\frac{n}{2}\right]+1}}\right) \mathrm{d}\tau  \\
\lesssim{}&  \left( M_{\frac{n}{4}}^{0}(t)^{2} + M_{\sigma_{0}}^{1}(t)^{2}+B_{\frac{n}{4}}^{0}(t)^{2}\right)
\int_{0}^{\frac{t-1}{2}} (t-\tau)^{-\frac{n}{2}-\frac{1}{2}} (1+\tau)^{-\frac{n}{2}}  \mathrm{d}\tau \\
&+ \left( \mathscr{M}_{0}(t)^{2} + \mathscr{M}_{1}(t)^{2} \right)\int_{\frac{t-1}{2}}^{t-1} (t-\tau)^{-\frac{n}{2}} (1+\tau)^{-\frac{n}{2}-q(1)}\mathrm{d}\tau\\
&+\left(B_{\frac{n}{4}}^{0}(t)^{2}+B_{\sigma_{3}}^{1}(t)^{2}\right)\int_{\frac{t-1}{2}}^{t-1} (t-\tau)^{-\frac{n}{2}} (1+t)^{-\frac{n}{4}-\sigma_{3}}\mathrm{d}\tau\\
&+\left( \mathscr{M}_{0}(t)^{2}+ \mathscr{M}_{\infty}(t)^{2}  + \mathscr{M}_{1}(t)^{2}\right)\int_{0}^{t-1} e^{-c(t-\tau)} (1+\tau)^{-\frac{n}{2}-\frac{1}{2}}\mathrm{d}\tau\\
\lesssim{}&(1+t)^{-\frac{n}{2}-q(1)} L(t) \left( \mathscr{M}_{0}(t)^{2}+ \mathscr{M}_{\infty}(t)^{2}  + \mathscr{M}_{1}(t)^{2}\right).
\end{align*}

Putting these estimates together, we obtain
\[
\norm{\mathbf{u}(t)-\mathscr{U}(t) \mathbf{u}_{0}}_{\infty} \lesssim (1+t)^{-\frac{n}{2}-q(1)} L(t),
\]
which is the desired decay bound. This concludes the proof of item (iii) of Theorem \ref{T1.3}.

\subsection*{Acknowledgement}
The author is deeply thankful to Professor Feng Xie, School of Mathematical Sciences, Shanghai Jiao Tong University, for valuable conversations and constructive suggestions.

\subsection*{Data Availability}
Data sharing is not applicable as no datasets were generated or analyzed during the current study.

\section*{Declarations}

\subsection*{Conflict of interest}
The authors declare that they have no conflict of interest.

\section{Appendix}\label{sec:4}

We begin by recalling the classical Gagliardo–Nirenberg interpolation inequality.

\begin{lemma}\label{L4.1}
Let \(2 \leq p \leq \infty\) and let \(j, k \in \mathbb{Z}\) satisfy
\[
0 \leq j < k, \quad k > j + n\left(\frac{1}{2} - \frac{1}{p}\right).
\]
Then
\[
\norm{\partial_x^j f}_p \lesssim \norm{f}_2^{1-a} \norm{\partial_x^k f}_2^a,
\]
where \(a = \frac{1}{k}\left(j + \frac{n}{2} - \frac{n}{p}\right)\).
\end{lemma}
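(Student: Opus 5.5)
The plan is to reduce Lemma \ref{L4.1} on \(\mathbb{R}_+^n\) to the corresponding inequality on the whole space \(\mathbb{R}^n\). For this I need an extension operator \(E\) that controls the \(L^2\) norm and the \emph{homogeneous} \(k\)-th order norm separately. Without loss of generality \(f\in H^k(\mathbb{R}_+^n)\), since otherwise the right-hand side is infinite; smooth functions up to the boundary are dense, so I may also assume \(f\) smooth.

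\textbf{Extension.} I would use a higher-order reflection of Lions type. For \(x_n<0\) set
\[
Ef(x',x_n)\triangleq\sum_{i=1}^{k}c_i\, f(x',-\lambda_i x_n),
\]
where \(0<\lambda_1<\dots<\lambda_k\) are fixed and the \(c_i\) solve the Vandermonde system \(\sum_i c_i(-\lambda_i)^m=1\) for \(m=0,\dots,k-1\).
\begin{itemize}
\item These matching conditions make the normal derivatives of orders \(0,\dots,k-1\) continuous across \(x_n=0\). Hence \(Ef\in H^k(\mathbb{R}^n)\), and its weak derivatives up to order \(k\) are the piecewise classical ones.
\item Each term is a dilation in \(x_n\), and dilation commutes with differentiation up to the constant factor \((-\lambda_i)^{m}\). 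Changing variables then gives, for every \(0\le m\le k\),
\[
\norm{\partial_x^m Ef}_{L^2(\mathbb{R}^n)}\lesssim \norm{\partial_x^m f}_{L^2(\mathbb{R}^n_+)}.
\]
\end{itemize}
In particular this holds for \(m=0\) and \(m=k\) separately. Since \(\norm{\partial_x^j f}_{L^p(\mathbb{R}_+^n)}\le\norm{\partial_x^j Ef}_{L^p(\mathbb{R}^n)}\), it suffices to prove the inequality on \(\mathbb{R}^n\) for \(g=Ef\).

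\textbf{Whole space, \(2\le p<\infty\).} Put \(s=n(\frac12-\frac1p)\). By the Sobolev embedding \(\dot H^{s}(\mathbb{R}^n)\hookrightarrow L^p\) (valid for \(0\le s<n/2\)),
\[
\norm{\partial_x^j g}_p\lesssim \norm{|\xi|^{j+s}\hat g}_2 .
\]
The hypothesis \(k>j+s\) gives \(a=(j+s)/k\in[0,1)\). Writing \(|\xi|^{2(j+s)}|\hat g|^2=(|\hat g|^2)^{1-a}(|\xi|^{2k}|\hat g|^2)^{a}\) and applying Hölder's inequality with exponents \(1/(1-a)\) and \(1/a\) yields
\[
\norm{|\xi|^{j+s}\hat g}_2\le\norm{g}_2^{1-a}\norm{\partial_x^k g}_2^{a},
\]
which is the claim.

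\textbf{Whole space, \(p=\infty\).} Bound \(\norm{\partial_x^j g}_\infty\le\int|\xi|^j|\hat g|\,\mathrm{d}\xi\) and split the integral at \(|\xi|=R\).
\begin{itemize}
\item On \(\{|\xi|\le R\}\), Cauchy–Schwarz gives the bound \(R^{j+n/2}\norm{g}_2\).
\item On \(\{|\xi|>R\}\), Cauchy–Schwarz gives \(R^{j+n/2-k}\norm{\partial_x^k g}_2\). This uses \(k>j+n/2\), which is exactly the hypothesis at \(p=\infty\), to make \(\int_{|\xi|>R}|\xi|^{2j-2k}\,\mathrm{d}\xi\) finite.
\end{itemize}
Choosing \(R=(\norm{\partial_x^kg}_2/\norm{g}_2)^{1/k}\) balances the two terms and gives the exponent \(a=(j+n/2)/k\). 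Degenerate cases, where \(g=0\) or \(\partial_x^k g=0\) and hence \(g=0\) since \(g\in L^2\), are trivial.

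\textbf{Main obstacle.} The delicate point is the extension step. A standard Stein extension only yields \(\norm{Eg}_{H^k}\lesssim\norm{g}_{H^k}\), which would spoil the homogeneous scaling of the inequality. I will therefore check carefully that the reflection coefficients give continuity of traces up to order \(k-1\), and that no lower-order terms appear in \(\partial^k Ef\). The latter holds because each term is a pure dilation with no cutoff, unlike the usual bounded-domain constructions.
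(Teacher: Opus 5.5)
Your proof is correct, but it does not follow the paper's route. The paper gives no argument for this lemma and simply cites the Gagliardo--Nirenberg inequality from Friedman's book. That reference is formulated for general Lebesgue exponents and leaves implicit why the purely multiplicative form, with no lower-order term, holds on the unbounded domain $\mathbb{R}^n_+$. Your argument supplies exactly this point.

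The Lions reflection you use is built from pure dilations in $x_n$ with no cutoff. It therefore controls $\norm{\partial_x^m Ef}_{L^2(\mathbb{R}^n)}$ by $\norm{\partial_x^m f}_{L^2(\mathbb{R}^n_+)}$ for each $m$ separately. Plancherel then reduces the whole-space case to a single H\"older step plus $\dot H^{s}\hookrightarrow L^p$ when $p<\infty$, and to a frequency splitting when $p=\infty$. The hypothesis $k>j+n(\frac12-\frac1p)$ enters exactly where it is needed: it gives $a<1$ in the first case and integrability of $|\xi|^{2j-2k}$ at infinity in the second. The cost is that the method is tied to $L^2$-based right-hand sides and rests on the Hardy--Littlewood--Sobolev embedding, but that is all the paper uses.

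Two small remarks. First, the obstacle you flag is milder than you suggest. Since $\mathbb{R}^n_+$ is invariant under $x\mapsto\lambda x$ for $\lambda>0$, any bounded extension $H^k(\mathbb{R}^n_+)\to H^k(\mathbb{R}^n)$, together with the standard control of intermediate derivatives, gives $\norm{\partial_x^j f}_p\lesssim\norm{f}_2+\norm{\partial_x^k f}_2$. Applying this to $f(\lambda\,\cdot)$ and optimizing in $\lambda$ recovers the multiplicative bound with the same exponent $a$. Second, your reduction ``otherwise the right-hand side is infinite'' tacitly uses that $f,\partial_x^k f\in L^2(\mathbb{R}^n_+)$ already forces $f\in H^k(\mathbb{R}^n_+)$. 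This is standard; alternatively, state the lemma for $f\in H^k$, which is how the paper applies it.
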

\begin{proof}
See \cite{Friedman69}.
\end{proof}

To estimate the nonlinear terms, we employ the following two lemmas, whose proofs can be found in \cite{Kagei-Kobayashi2005} and are therefore omitted.

\begin{lemma}\label{L4.2}
\begin{enumerate}
\item[(i)] Let \( \mu _j\) and \(s_j\) for \(j = 1, \dots, \ell\) be  nonnegative  integers satisfying
\[
\mu_j < s_j \quad (j = 1, \dots, \ell)
\qquad\text{and}\qquad
s_1 + \cdots + s_\ell \geq (\ell - 1)s_0 + \mu,
\]
where \( \mu  =  \mu _1 + \cdots +  \mu _\ell\). Then there exist constants \(0 \leq a_j < 1\) for \(j = 1, \dots, \ell\) such that
\[
\norm{\partial_x^{ \mu _1} f_1 \cdots \partial_x^{ \mu _\ell} f_\ell}_2 \lesssim \prod_{1 \leq j \leq \ell} \norm{f_j}_2^{1-a_j} \norm{\partial_x^{s_j} f_j}_2^{a_j}.
\]

\item[(ii)] Let \(s \geq s_0\) and let \( \mu _j\) and \(s_j\) for \(j = 1, \dots, \ell\) be  nonnegative integers satisfying
\[
\mu_j \leq s_j \leq s + \mu_j \quad (j = 1, \dots, \ell)
\qquad\text{and}\qquad
s_1 + \cdots + s_\ell \geq (\ell - 1)s + \mu,
\]
where \( \mu  =  \mu _1 + \cdots +  \mu _\ell\). Then there exist constants \(0 \leq a_j \leq 1\) for \(j = 1, \dots, \ell\) such that
\[
\norm{\partial_x^{ \mu _1} f_1 \cdots \partial_x^{ \mu _\ell} f_\ell}_2 \lesssim \prod_{1 \leq j \leq \ell} \norm{f_j}_2^{1-a_j} \norm{\partial_x^{s_j} f_j}_2^{a_j}.
\]
\end{enumerate}
\end{lemma}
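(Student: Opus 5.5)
The statement to prove is Lemma \ref{L4.2}, a product estimate for $\norm{\partial_x^{\mu_1} f_1 \cdots \partial_x^{\mu_\ell} f_\ell}_2$ on $\mathbb{R}^n_+$. The plan is to reduce it to Hölder's inequality combined with the Gagliardo--Nirenberg inequality of Lemma \ref{L4.1}, applied factor by factor.

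First, apply Hölder's inequality with exponents $p_1,\dots,p_\ell\in[2,\infty]$ satisfying $\sum_j 1/p_j = 1/2$:
\[
\norm{\partial_x^{\mu_1} f_1 \cdots \partial_x^{\mu_\ell} f_\ell}_2 \le \prod_{j} \norm{\partial_x^{\mu_j} f_j}_{p_j}.
\]
Then estimate each factor with Lemma \ref{L4.1}, taking $j=\mu_j$ and $k=s_j$:
\[
\norm{\partial_x^{\mu_j} f_j}_{p_j}\lesssim \norm{f_j}_2^{1-a_j}\norm{\partial_x^{s_j}f_j}_2^{a_j},
\qquad a_j=\frac{1}{s_j}\Big(\mu_j+\frac n2-\frac n{p_j}\Big).
\]
This requires the strict inequality $s_j>\mu_j+n(\tfrac12-\tfrac1{p_j})$, which gives $a_j<1$. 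Lemma \ref{L4.1} is stated on $\mathbb{R}^n$ but holds on the half space via a standard extension operator. For (ii) we need it only in the inhomogeneous form, with $\norm{f}_{H^k}$ on the right, and then absorb the lower-order terms.

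The core of the proof is choosing the exponents. Set $\theta_j = \tfrac1{p_j}\in[0,\tfrac12]$. We need $\sum_j\theta_j=\tfrac12$ together with
\[
n\Big(\tfrac12-\theta_j\Big) < s_j-\mu_j \quad\text{for each } j.
\]
Equivalently, $\theta_j > \tfrac12 - (s_j-\mu_j)/n$. Define $\underline\theta_j \triangleq \max\{0,\tfrac12-(s_j-\mu_j)/n\}$. Such $\theta_j$ exist with the required sum if and only if:
\begin{itemize}
\item[(a)] $\underline\theta_j<\tfrac12$ for each $j$, which follows from $s_j>\mu_j$; and
\item[(b)] $\sum_j\underline\theta_j<\tfrac12$ when the lower bounds are strict.
\end{itemize}
In case (b), when some $\underline\theta_j=0$ the constraint is non-strict and we may take $\theta_j=0$, i.e.\ an $L^\infty$ factor. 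Lemma \ref{L4.1} with $p=\infty$ still needs $s_j-\mu_j>n/2$, which is exactly the condition $\underline\theta_j=0$ in its strict form.

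To check (b), sum over the indices where $\underline\theta_j>0$. Using $s_1+\cdots+s_\ell\ge(\ell-1)s_0+\mu$ and $s_0=[n/2]+1>n/2$, we get
\[
\sum_j\Big(\tfrac12-\tfrac{s_j-\mu_j}{n}\Big)\le \frac{\ell}{2}-\frac{(\ell-1)s_0}{n}<\frac{\ell}{2}-\frac{\ell-1}{2}=\frac12.
\]
This settles the case where every $\underline\theta_j>0$. If some $\underline\theta_j$ are clipped at zero, the sum only decreases, so (b) still holds. The expected main obstacle is this bookkeeping: the boundary cases where $\underline\theta_j=0$ but $s_j-\mu_j=n/2$ exactly. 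These cannot occur for integer $s_j$ when $n$ is odd. When $n$ is even they force a $p_j$ slightly below $\infty$, and the strict slack from $s_0>n/2$ leaves room to absorb this by perturbing.

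For (ii), the same scheme works with $s$ in place of $s_0$ and the weaker hypotheses $\mu_j\le s_j\le s+\mu_j$, so $a_j\in[0,1]$ is allowed. If $s_j=\mu_j$, take $p_j=2$ and $a_j=0$. When the sum constraint would force $a_j=1$, use the fact that $\partial_x^{s_j}f_j$ is controlled in $L^2$ directly. The remaining factors then carry $\ell-1$ derivative budgets summing to at least $(\ell-1)s$, which we handle as in (i) via Sobolev embedding $H^{s}\subset L^\infty$ for $s\ge s_0$.
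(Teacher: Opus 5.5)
Your strategy (Hölder with $\sum_j 1/p_j=\tfrac12$, then Lemma~\ref{L4.1} factor by factor) is the standard Kagei--Kobayashi route, to which the paper simply defers. However, the counting in step (b) fails exactly where it matters.

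The sentence ``if some $\underline\theta_j$ are clipped at zero, the sum only decreases'' is backwards. Replacing the negative numbers $\tfrac12-\tfrac{s_j-\mu_j}{n}$ by $0$ \emph{increases} the sum. So your bound on $\sum_{j=1}^{\ell}\bigl(\tfrac12-\tfrac{s_j-\mu_j}{n}\bigr)$ says nothing about $\sum_{j\in J}\underline\theta_j$, where $J=\{j:\ s_j-\mu_j<\tfrac n2\}$. What you need is $\sum_{j\in J}(s_j-\mu_j)>\tfrac n2(|J|-1)$. The total budget $\sum_j s_j\ge(\ell-1)s_0+\mu$ cannot deliver this when an index outside $J$ carries a huge $s_j$.

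In fact (i) as printed is false. Take $n=4$ (so $s_0=3$), $\ell=3$, $\mu_j=0$, $(s_1,s_2,s_3)=(1,1,4)$, which meets the hypotheses with equality. Let $f_1=f_2=\phi((x-x_0)/\varepsilon)$ with $\phi\in C_c^\infty(B_1)$, $x_0=(0,0,0,1)$, and $f_3=\psi\in C_c^\infty(\mathbb{R}^4_+)$ with $\psi\equiv1$ near $x_0$. The left side equals $\varepsilon^{2}\norm{\phi^2}_2$, while the right side is a constant times $\varepsilon^{4-a_1-a_2}$. Letting $\varepsilon\to0$ forces $a_1+a_2\ge2$, contradicting $a_j<1$. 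In your variables, $\theta_1,\theta_2>\tfrac14$ is forced, even though the unclipped sum you computed is $0$. For $n=3$ the same happens with $\ell=4$, $(s_j)=(1,1,1,3)$.

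The missing ingredient is an upper bound $s_j\le s_0+\mu_j$ in (i), the analogue of $s_j\le s+\mu_j$ in (ii). With it, subtracting the indices outside $J$ gives $\sum_{j\in J}(s_j-\mu_j)\ge(\ell-1)s_0-(\ell-|J|)s_0=(|J|-1)s_0>(|J|-1)\tfrac n2$ for $|J|\ge2$. The cases $|J|\le1$ are trivial since $s_j-\mu_j\ge1$. Hence $\sum_{j\in J}\underline\theta_j<\tfrac12$, and your interval argument closes, including the even-$n$ endpoint $s_j-\mu_j=\tfrac n2$.

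For (ii) this bound is a hypothesis, so the corrected count with $s$ in place of $s_0$ works. But your sketch sends (ii) through the same faulty step and never uses $s_j\le s+\mu_j$, which is precisely where it is needed. With the correct count no $a_j=1$ ever arises: if $s_j=\mu_j$ for some $j$, the hypotheses force $s_k-\mu_k=s>\tfrac n2$ for all $k\neq j$, and $L^2\times L^\infty\times\cdots\times L^\infty$ works.

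Finally, do not pass through the inhomogeneous inequality on $\mathbb{R}^n_+$. Lower-order terms cannot be ``absorbed'' into the homogeneous multiplicative form without breaking scaling. Instead use a higher-order reflection extension, which is bounded simultaneously on $L^2$ and on $\dot H^k$.
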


\begin{lemma}\label{L4.3}
\begin{enumerate}
\item[(i)] Let \(F \in C^\infty(\mathbb{R})\) and \(s \geq s_0\). Assume that  \( \mu \) and \(\nu\) are   nonnegative  integers satisfying
\[
\mu  + 2\nu \leq s.
\]
Then
\[
\begin{aligned}
&\norm{\partial_x^ \mu  \partial_t^\nu F(f)}_2 \\
\lesssim&
\begin{cases}
 \norm{ F(f)}_2 & \text{if }  \mu  = 0, \nu = 0, \\[6pt]
C_{ \mu, \nu} \left(1 + \sum_{1 \leq m \leq \nu} \norm{\partial_t^m f}_{H^{ {s}-2j}} \right)^{\nu-1} \sum_{1 \leq m \leq \nu} \norm{\partial_t^m f}_{H^{ {s}-2m}} & \text{if }  \mu  = 0, \nu \geq 1, \\[6pt]
C_{ \mu, \nu} \left(1 + \norm{\partial_x f}_{H^{s-1}} \right)^{ \mu -1} \norm{\partial_x f}_{H^{s-1}} & \text{if }  \mu  \geq 1, \nu = 0, \\[6pt]
C_{ \mu, \nu} \left(1 + \norm{\partial_x f}_{H^{s-1}} + \sum_{1 \leq m \leq \nu} \norm{\partial_t^m f}_{H^{ {s}-2m}} \right)^{ \mu +\nu-1} \\
\cdot\left( \norm{\partial_x f}_{H^{s-1}} + \sum_{1 \leq m \leq \nu} \norm{\partial_t^m f}_{H^{ {s}-2m}} \right) & \text{if }  \mu  \geq 1, \nu \geq 1,
\end{cases}
\end{aligned}
\]
where
\[
C_{ \mu, \nu} = \sup_{1 \leq k \leq  \mu +\nu} \norm{F^{(k)}(f)}_\infty.
\]

\item[(ii)] Let \(F \in C^\infty(\mathbb{R})\) and \(s \geq s_0\). Assume that \( {s}\), \(s_j\), \( \mu _j\) for \(j = 2, \dots, \ell\), \( \mu _1\) and \(\nu_1\) are nonnegative integers satisfying
\begin{align*}
   \mu _1 \leq{}& {s} - 2\nu_1,\\
 \mu _j \leq{}& s_j \leq s+\mu _j \quad (j = 2, \dots, \ell),\\
({s}-2&\nu_1)+ s_2 + \cdots + s_\ell \geq (\ell - 1)s +  \mu,
\end{align*}
where \( \mu  =  \mu _1 + \cdots +  \mu _\ell\). Then
\[
\begin{aligned}
&\norm{\partial_x^{ \mu _1} \partial_t^{\nu_1} F(f_1) \prod_{2 \leq j \leq \ell} \partial_x^{ \mu _j} f_j}_2 \\
\lesssim&
\begin{cases}
\norm{F(f_1)}_{\infty} \prod_{2 \leq j \leq \ell} \norm{f_j}_{H^{s_j}} & \text{if }  \mu _1 = 0, \nu_1 = 0, \\[6pt]
C_{ \mu _1, \nu_1} \left(1 + \sum_{1 \leq m \leq \nu_1} \norm{\partial_t^m f_1}_{H^{ {s}-2m}} \right)^{\nu_1-1} \\
 \cdot\left( \sum_{1 \leq m \leq \nu_1} \norm{\partial_t^m f_1}_{H^{ {s}-2m}} \right) \prod_{2 \leq j \leq \ell} \norm{f_j}_{H^{s_j}} & \text{if }  \mu _1 = 0, \nu_1 \geq 1, \\[6pt]
C_{ \mu _1, \nu_1} \left(1 + \norm{\partial_x f_1}_{H^{s-1}} \right)^{ \mu _1-1} \norm{\partial_x f_1}_{H^{s-1}}\prod_{2 \leq j \leq \ell} \norm{f_j}_{H^{s_j}} & \text{if }  \mu _1 \geq 1, \nu_1 = 0, \\[6pt]
C_{ \mu _1, \nu_1} \left(1 + \norm{\partial_x f_1}_{H^{s-1}} + \sum_{1 \leq m \leq \nu_1} \norm{\partial_t^m f_1}_{H^{ {s}-2m}} \right)^{ \mu _1+\nu_1-1} \\
\cdot\left( \norm{\partial_x f_1}_{H^{s-1}} + \sum_{1 \leq m \leq \nu_1} \norm{\partial_t^m f_1}_{H^{ {s}-2m}} \right)\prod_{2 \leq j \leq \ell} \norm{f_j}_{H^{s_j}} & \text{if }  \mu _1 \geq 1, \nu_1 \geq 1,
\end{cases}
\end{aligned}
\]
where
\[
C_{ \mu _1, \nu_1} = \sup_{1 \leq k \leq  \mu _1+\nu_1} \norm{F^{(k)}(f_1)}_\infty.
\]
\end{enumerate}
\end{lemma}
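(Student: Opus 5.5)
The statement is Lemma \ref{L4.3}, the nonlinear (Moser-type) estimate for compositions $F(f)$ with mixed space-time derivatives. The plan is induction on $\mu+\nu$, driven by the Faà di Bruno formula. Reducing every product to Lemma \ref{L4.2}(ii) keeps the Sobolev index at most $s$ in $x$ and at most $s-2m$ for $\partial_t^m f$.

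\textbf{Case $\mu=\nu=0$.} There is nothing to prove.

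\textbf{Case $\mu\ge 1$, $\nu=0$.} Expand $\partial_x^\mu F(f)$ as a sum of terms
\[
F^{(k)}(f)\,\partial_x^{\mu_1}f\cdots\partial_x^{\mu_k}f,\qquad \mu_1+\cdots+\mu_k=\mu,\ \mu_i\ge1,\ 1\le k\le\mu .
\]
The plan for each term is as follows.
\begin{itemize}
\item Pull out $\|F^{(k)}(f)\|_\infty\le C_{\mu,0}$.
\item Write $\partial_x^{\mu_i}f=\partial_x^{\mu_i-1}(\partial_x f)$ and set $g=\partial_x f$, so the factors have orders $\mu_i-1$.
\item Apply Lemma \ref{L4.2}(ii) with $f_j=g$, derivative orders $\mu_i-1$, and $s_j=s-1$ (or $\mu_i-1+s_0$ if smaller). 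Because $s\ge s_0$, the summed condition $\sum s_j\ge (k-1)(s-1)+\sum(\mu_i-1)$ holds. The reason is that $\sum(\mu_i-1)=\mu-k\le s-k$, and one factor may carry the top derivative while the others stay at low order.
\end{itemize}
This bounds each term by $\|g\|_{H^{s-1}}^k$. Summing over $k$ gives $(1+\|\partial_x f\|_{H^{s-1}})^{\mu-1}\|\partial_x f\|_{H^{s-1}}$.

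\textbf{Case $\mu=0$, $\nu\ge1$.} Expand $\partial_t^\nu F(f)$ into terms
\[
F^{(k)}(f)\prod_i\partial_t^{m_i}f,\qquad \sum m_i=\nu .
\]
Each factor $\partial_t^{m_i}f$ is measured in $H^{s-2m_i}$. The product is placed in $L^2$ using Sobolev embedding: every factor except the one with the largest $m_i$ goes to $L^\infty$. This needs $s-2m_i\ge s_0$, i.e.\ $2(\nu-m_{\max})\le s-s_0$. When this fails, use Gagliardo–Nirenberg (Lemma \ref{L4.1}) to distribute $L^p$ exponents with $\sum 1/p_i=1/2$. This is admissible since $\sum_i(s-2m_i)\ge(k-1)s\ge(k-1)s_0$ once $2\nu\le s$. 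The result is the product of $k$ norms, which gives the stated power $\nu-1$.

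\textbf{Mixed case $\mu,\nu\ge1$.} Combine the two expansions. Each term is $F^{(k)}(f)$ times $k$ factors $\partial_x^{\alpha_i}\partial_t^{m_i}f$ with $\sum\alpha_i=\mu$ and $\sum m_i=\nu$. Factors with $m_i=0$ are handled as $\partial_x^{\alpha_i-1}(\partial_x f)$ against $H^{s-1}$. Factors with $m_i\ge1$ are handled as $\partial_x^{\alpha_i}$ of $\partial_t^{m_i}f$ against $H^{s-2m_i}$. The H\"older/Gagliardo–Nirenberg bookkeeping is the same as before, with total order budget $\mu+2\nu\le s$.

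\textbf{Part (ii).} Treat it the same way. Expand $\partial_x^{\mu_1}\partial_t^{\nu_1}F(f_1)$ as above, then multiply by the extra factors $\partial_x^{\mu_j}f_j$.
\begin{itemize}
\item When $\mu_1=\nu_1=0$, put $F(f_1)$ in $L^\infty$ and apply Lemma \ref{L4.2}(ii) to the remaining product.
\item Otherwise, the expanded factors of $f_1$ have total regularity $s-2\nu_1$ available. Together with the $s_j$, the hypothesis $(s-2\nu_1)+s_2+\cdots+s_\ell\ge(\ell-1)s+\mu$ is exactly the counting condition of Lemma \ref{L4.2}(ii) applied to the combined product.
\end{itemize}

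\textbf{Main obstacle.} The hardest part is the exponent bookkeeping when time derivatives sit in the less regular spaces $H^{s-2m}$. I would verify the admissibility condition in Lemma \ref{L4.1} for each factor by assigning $1/p_i=\tfrac12\cdot(\text{order used})/(\text{total order available})$. This is the standard choice, and it works because $\mu+2\nu\le s$.
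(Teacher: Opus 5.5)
Your proposal is correct. It is the standard argument that the paper omits and defers to \cite{Kagei-Kobayashi2005}: a Fa\`a di Bruno expansion of $\partial_x^{\mu}\partial_t^{\nu}F(f)$, followed by the product estimate of Lemma \ref{L4.2}(ii) (H\"older plus Sobolev/Gagliardo--Nirenberg, with the slack coming from $s\ge s_0>n/2$). One bookkeeping fix: apply Lemma \ref{L4.2}(ii) with its parameter equal to $s$, not $s-1$, since $s-1$ drops below $s_0$ when $s=s_0$; with $s_j=s-1$ and orders $\mu_i-1$ the counting condition becomes $k(s-1)\ge (k-1)s+\mu-k$, i.e.\ exactly $\mu\le s$, the same count in the mixed case is exactly $\mu+2\nu\le s$, and in part (ii) counting over all $k+\ell-1$ factors reproduces the hypothesis $(s-2\nu_1)+s_2+\cdots+s_\ell\ge(\ell-1)s+\mu$ verbatim.
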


Finally, we record a standard elementary estimate for convolution integrals on the half-line. Its proof is straightforward and is therefore omitted.

\begin{lemma}\label{L4.4}
Let \(0 < s_1 \leq s_2\). Then
\[
\begin{aligned}
\int_{0}^{t} (1 + t - \tau)^{-s_1} (1 + \tau)^{-s_2}  \mathrm{d} \tau
&= \int_{0}^{t} (1 + t - \tau)^{-s_2} (1 + \tau)^{-s_1}  \mathrm{d} \tau \\
&\lesssim
\begin{cases}
(1 + t)^{-s_1}, & \text{if } s_2 > 1, \\
(1 + t)^{-s_1} \ln(1 + t), & \text{if } s_2 = 1, \\
(1 + t)^{1 - s_1 - s_2}, & \text{if } s_2 < 1.
\end{cases}
\end{aligned}
\]
\end{lemma}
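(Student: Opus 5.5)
The statement is the elementary convolution bound of Lemma \ref{L4.4}. I would prove it by splitting the integral at the midpoint $\tau=t/2$.

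First, the equality of the two integrals is the change of variables $\tau\mapsto t-\tau$, which maps $[0,t]$ to itself. So it suffices to estimate
\[
J(t)\triangleq\int_{0}^{t}(1+t-\tau)^{-s_1}(1+\tau)^{-s_2}\,\mathrm{d}\tau .
\]
For $t\le 1$ the integrand is bounded by $1$, so $J(t)\le 1$, and every right-hand side is comparable to a positive constant on $[0,1]$ (for the logarithmic case, replace $\ln(1+t)$ by $1+\ln(1+t)$, or simply state the bound for $t\ge1$, which is how it is used in the paper). Henceforth assume $t\ge 1$ and write $J=J_1+J_2$, where $J_1$ integrates over $[0,t/2]$ and $J_2$ over $[t/2,t]$.

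On $[0,t/2]$ we have $1+t-\tau\ge \frac12(1+t)$. Hence
\[
J_1\lesssim(1+t)^{-s_1}\int_0^{t/2}(1+\tau)^{-s_2}\,\mathrm{d}\tau .
\]
The last integral is bounded by a constant if $s_2>1$, by $\ln(1+t)$ if $s_2=1$, and by $(1+t)^{1-s_2}$ if $s_2<1$. This already produces the three claimed regimes.

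On $[t/2,t]$ we have $1+\tau\ge\frac12(1+t)$. Hence
\[
J_2\lesssim(1+t)^{-s_2}\int_0^{t/2}(1+\sigma)^{-s_1}\,\mathrm{d}\sigma ,
\]
and we check that this is dominated by the $J_1$ bound in each case. If $s_1>1$, then $J_2\lesssim(1+t)^{-s_2}\le(1+t)^{-s_1}$. If $s_1=1$, then $J_2\lesssim(1+t)^{-s_2}\ln(1+t)$; this is at most $(1+t)^{-s_1}\ln(1+t)$ when $s_2=1$, and at most $(1+t)^{-s_1}$ when $s_2>1$, because $(1+t)^{1-s_2}\ln(1+t)$ is bounded. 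If $s_1<1$, then $J_2\lesssim(1+t)^{1-s_1-s_2}$; this is exactly the bound in the case $s_2<1$, and when $s_2\ge1$ it is $\le(1+t)^{-s_1}$ since $1-s_2\le 0$.

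The only delicate point is the bookkeeping in the borderline cases $s_1=1$ or $s_2=1$: the logarithm from $J_2$ must be absorbed using $s_2\ge s_1$, with the strict inequality $s_2>1$ when the target bound contains no logarithm. Everything else is routine.
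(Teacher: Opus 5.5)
Your proof is correct. The paper omits the proof of this lemma as straightforward, and splitting at $\tau=t/2$ is the standard argument it has in mind.

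One small improvement: you do not need to weaken the logarithmic case for $t\le 1$. The integrand is at most $1$, so $J(t)\le t\le \ln(1+t)/\ln 2$ on $[0,1]$, by concavity of $\ln(1+t)$. Hence $J(t)\lesssim (1+t)^{-s_1}\ln(1+t)$ holds for all $t\ge 0$, exactly as stated.
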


\end{document}